\documentclass[leqno]{article}

\pdfoutput=1

\usepackage[english]{babel}

\usepackage[letterpaper,top=2cm,bottom=2cm,left=3.5cm,right=3.5cm,
marginparwidth=1.75cm]{geometry}

\usepackage{amsmath,amsthm,amssymb,tikz,tikz-cd,ytableau,relsize}
\usepackage{stmaryrd,graphicx,xcolor}
\usepackage[colorlinks=true, allcolors=blue]{hyperref}
\usetikzlibrary{calc,tikzmark}
\usepackage{tcolorbox}
\usepackage[framemethod=tikz]{mdframed}
\usepackage{lipsum}
\usepackage{tikz}
\usepackage{tkz-fct} 
\usepackage[toc,page]{appendix}
\usepackage{amsthm}

\usetikzlibrary{positioning}
\usetikzlibrary{patterns}
\usetikzlibrary{arrows}
\usetikzlibrary{cd}
\tikzset{>=stealth}

\newcommand{\vf}{\vfill\end{document}}

\newtheorem{Theorem}[equation]{Theorem}
\newtheorem{Corollary}[equation]{Corollary}
\newtheorem{Property}[equation]{Property}
\newtheorem{Lemma}[equation]{Lemma}
\newtheorem{Fact}[equation]{Fact}

\newtheorem*{Heuristic}{Heuristic}

\theoremstyle{definition}

\newtheorem{Definition}[equation]{Definition}
\newtheorem{Notation}[equation]{Notation}
\newtheorem{Example}[equation]{Example}
\newtheorem{Remark}[equation]{Remark}
\newtheorem*{Question}{Question}

\numberwithin{equation}{section}

\title{Computation of the multiplicities of zigzags}

\date{}

\author{Victor Chen}

\newtcolorbox{Def}[1]{colback=black!5!white,colframe=green!75!black,title=#1}

\newtcolorbox{Prop}[1]{colback=blue!5!white,colframe=red!75!black,title=#1}

\newcommand{\case}[2]{%
 {\begin{tikzpicture}[baseline=1.5ex, inner sep=0pt, outer sep=0pt, remember picture, scale=0.8]
\coordinate (A) at (0, #2);
\draw (0, 0) -- (0,#2) -- (1,#2) -- (1,#2 + 1) -- (0,#2 + 1) -- (0, 0);
\end{tikzpicture}}
\ifstrequal{#1}{0}%
    { {\begin{tikzpicture}[overlay, remember picture, scale=0.8]
\fill [black]
    (A)+(.5,.5) circle [radius=0.1];
\end{tikzpicture}}}% #1 = #2 -> [#1]
    {}% [#1,#2]
  \ifstrequal{#1}{1}%
    { {\begin{tikzpicture}[overlay, remember picture, scale=0.8]
\draw [thick,domain=0:90] plot ({-1 + 1/2*cos(\x)}, {-.3+1/2*sin(\x)});
\end{tikzpicture}}}% #1 = #2 -> [#1]
    {}% [#1,#2]
\ifstrequal{#1}{2}%
    { {\begin{tikzpicture}[overlay, remember picture, scale=0.8]
\draw[thick] (A)+(.5, 0) -- +(.5, .5);
\fill [black]
    (A)+(.5,.5) circle [radius=0.1];
\end{tikzpicture}}}% #1 = #2 -> [#1]
    {}% [#1,#2]
\ifstrequal{#1}{3}%
    { {\begin{tikzpicture}[overlay, remember picture, scale=0.8]
\draw[thick] (A)+(0, .5) -- +(.5, .5);
\fill [black]
    (A)+(.5,.5) circle [radius=0.1];
\end{tikzpicture}}}% #1 = #2 -> [#1]
    {}% [#1,#2]
\ifstrequal{#1}{4}%
    { {\begin{tikzpicture}[overlay, remember picture, scale=0.8]
\draw[thick] (A)+(.5, 1) -- +(.5, .5);
\fill [black]
    (A)+(.5,.5) circle [radius=0.1];
\end{tikzpicture}}}% #1 = #2 -> [#1]
    {}% [#1,#2]
\ifstrequal{#1}{5}%
    { {\begin{tikzpicture}[overlay, remember picture, scale=0.8]
\draw[thick] (A)+(1, .5) -- +(.5, .5);
\fill [black]
    (A)+(.5,.5) circle [radius=0.1];
\end{tikzpicture}}}% #1 = #2 -> [#1]
    {}% [#1,#2]
  \ifstrequal{#1}{6}%
    { {\begin{tikzpicture}[overlay, remember picture, scale=0.8]
\draw [thick,domain=0:90] plot ({ - 1/2*cos(\x)}, {.7-1/2*sin(\x)});
\end{tikzpicture}}}% #1 = #2 -> [#1]
    {}% [#1,#2]
}

\begin{document}

\maketitle

\begin{abstract}
In this note, we explore various cohomological invariants on double complexes with the aim of finding their decomposition into irreducible parts, which are of square and zigzag shape. By studying the growth rate of the number of invariants given by the multiplicities of zigzags in the double complex of an $n$-dimensional complex manifold, we show that the De Rham, Dolbeault, Bott-Chern, Aeppli, and Varouchas cohomologies do not suffice to distinguish non-isomorphic double complexes. We also describe the zigzags counted by the Bigolin cohomology, and show how their dimensions are related to the multiplicities of odd zigzags.

A special class of complex manifolds is given by the nilmanifolds. For a nilmanifold, the double complex of left-invariant forms is quasi-isomorphic to the double complex of differential forms. In dimension $6$, we compute the double complex of forms of two nilmanifolds having the same Betti, Hodge and Bott-Chern numbers, but whose double complexes are non-isomorphic. We also compute the double complexes of a subclass of almost abelian nilmanifolds, which exist in any dimension.
\end{abstract}

\tableofcontents

\section{Introduction}
Let $M$ be a complex manifold. 
We denote by $A^k(M)$ the space of smooth $k$-forms on $M$ and by $A^{p, q}(M)$ the space of smooth $(p, q)$-forms on $M$. 
The exterior differential $d$ can be written as the sum of the Dolbeault operators $\partial$ and $\overline{\partial}$:
\[
d
=
\partial
+
\overline{\partial}.
\]
It is classical that $d^2 = 0$, $\partial^2 = 0$, $\overline{\partial}^2=0$ and $\partial \overline{\partial} = \overline{\partial} \partial$. These conditions define a {\sl double complex} $(A^{\bullet, \bullet}, \partial, \overline{\partial})$:
\[
\begin{tikzcd}[scale=0.3em]
\vdots & \vdots & \vdots & \empty \\
A^{0, 2}(X) \arrow[u] \arrow[r]  & A^{1, 2}(X) \arrow[u] \arrow[r] & A^{2, 2}(X) \arrow[u] \arrow[r] &\cdots \\
A^{0, 1}(X) \arrow[u] \arrow[r]  & A^{1, 1}(X) \arrow[u] \arrow[r] & A^{2, 1}(X) \arrow[u] \arrow[r] &\cdots \\
A^{0, 0}(X) \arrow[u] \arrow[r]  & A^{1, 0}(X) \arrow[u] \arrow[r] & A^{2, 0}(X) \arrow[u] \arrow[r] &\cdots \\
\end{tikzcd}
\]

We will consider two types of numerical invariants that one can attach to a double complex. 
The first type is obtained by decomposing a double complex.
In section~\ref{deux}, we recall that a double complex can be decomposed 
as a direct sum of irreducible double complexes, 
which are of square and zigzag shape~(\cite{Ste}, \cite{KQ}). 
\[
\text{A square :}
\begin{tikzcd}
\mathbb{C} \arrow[r, "1"] 
& \mathbb{C} \\
\mathbb{C} \arrow[r, "1"] \arrow[u, "1"]
& \mathbb{C} \arrow[u, "-1"] 
\end{tikzcd},
\]
\[
\text{Zigzags :}
\begin{tikzcd}[row sep=small, column sep = small]
\mathbb{C} & & \\
\mathbb{C} \arrow[u] \arrow[r] & \mathbb{C} & \\
& \mathbb{C} \arrow[u]
\end{tikzcd},
\quad \quad
\begin{tikzcd}[row sep=small, column sep = small]
\mathbb{C} & & \\
\mathbb{C} \arrow[u] \arrow[r] & \mathbb{C} & \\
& \mathbb{C} \arrow[u] \arrow[r] & \mathbb{C}
\end{tikzcd},
\quad \quad
\begin{tikzcd}[row sep=small, column sep = small]
\mathbb{C}\arrow[r] & \mathbb{C} & \\
& \mathbb{C} \arrow[u] \arrow[r] & \mathbb{C} \\
& & \mathbb{C} \arrow[u]
\end{tikzcd},
\ 
\cdots
\]
We will distinguish zigzags having an even number of components from zigzags having an odd number of components, and call them {\sl even zigzags} and {\sl odd zigzags} respectively.

For a compact complex manifold, the zigzags appearing in the decomposition of the associated double complex have finite multiplicities. Thus, these multiplicities will be the first type of numerical invariants that we shall consider.
%%%%%%%%%%%%%%%%%%%%%%%%%%%%%%%%%%%%%%%%%%

The second type of numerical invariant are given by cohomologies.
The cohomology associated to the total complex is the {\sl De Rham cohomology}. 
For a compact complex manifold, the cohomology spaces are finite dimensional, and their dimensions
$b^k$ are called the {\sl Betti numbers}. These numbers are topological invariants of the manifold.

The cohomology associated to the vertical complex
\[
A^{p, 0}
\xrightarrow{\overline{\partial}}
A^{p, 1}
\xrightarrow{\overline{\partial}}
\cdots ,
\]
is called the {\sl Dolbeault cohomology}. The dimension $h^{p, q} = \dim H_{\overline{\partial}}^{p, q}(M)$ is called the $(p, q)$-{\sl Hodge number}, and the data of all these numbers is referred to as the {\sl Hodge diamond}:
\[
\aligned
&\quad \quad \quad h^{n, n} \\
&\quad h^{n, n-1} \ h^{n-1, n} \\
&h^{n, n-2} \ h^{n-1, n-1} \ h^{n-2, n} \\
&\quad \quad \quad \vdots \\
&\quad \ \ h^{0, 1} \ h^{1, 0} \\
&\quad \quad \quad h^{0, 0}
\endaligned
\]
Hodge numbers are invariants of the complex structure of $M$. The relation between the topological invariants $b^k$ and the Hodge numbers $h^{p, q}$ is given by the {\sl Frölicher spectral sequence} (\cite{Fro}).

Other cohomological invariants can be considered: there is the {\sl Bott-Chern} and the {\sl Aeppli} cohomologies. Other cohomologies are given by the {\sl Varouchas} cohomologies~(\cite{JV}) and the Bigolin cohomology~(\cite{Sch}, \cite{Ste3}, \cite{Piovani}).

All cohomologies commute with direct sums and they vanish on squares. Thus, the multiplicities of the zigzags in a double complex determine all cohomologies. We are interested in the reverse question.

\begin{Question}
Fixing a collection of cohomological invariants of a double complex, to what extent do they determine its decomposition into irreducible summands?
\end{Question}

The values of cohomological invariants give linear equations involving the zigzags.
It is interesing, for heuristical reasons, to know the growth rates of the different numerical invariants.

\begin{Property}
The number of invariants considered above grows as follows in $n = \dim_{\mathbb{C}}X$.
\begin{itemize}
\item The Betti numbers $b^k$, $O(n)$.
\item The Hodge and conjugate Hodge numbers $h_{\overline{\partial}}$ and $h_{\partial}$, $O(n^2)$.
\item The Bott-Chern and Aeppli cohomologies $h_{\text{BC}}$ and $h_{\text{A}}$, $O(n^2)$.
\item The Varouchas cohomologies, $O(n^2)$.
\item The Frölicher spectral sequences and the Bigolin numbers, $O(n^3)$.
\item Multiplicities of zigzags, $O(n^3)$.
\end{itemize} 
\end{Property}

In fact, for a compact complex manifold, the zigzags in the decomposition of the double complex must respect some symmetries and cannot have components in a corner. Under these conditions, we compute the exact number of linearly independent zigzag multiplicities. However, we do not know if all these zigzags can be obtained by formal sums of double complexes coming from complex manifolds.
\begin{Heuristic}
\[
\# \, \text{linearly independent zigzags}
=
\frac{1}{4}
\Big(
\frac{4n^3 - 4n}{3}
+5n^2
- 5
+ \delta_{n \in 2\mathbb{Z}}
\Big).
\]
\end{Heuristic}
 It is noteworthy that the number grows in $n^3$, whereas the number of linear equations given by the cohomological invariants (except for the Bigolin cohomology and the Frölicher spectral sequence which also have $O(n^3)$ equations) is $O(n^2)$.

In section~\ref{loc_sim}, we take a deeper look at double complexes sharing the same Hodge, Bott-Chern, Aeppli and Varouchas cohomologies (\cite{JV}). Such double complexes will be referred to as being {\sl locally similar}, as their local information in each bidegree coincide. From the asymptotics of the invariants, locally similar double complexes having different zigzag multiplicities must exist. 
\begin{Theorem}
Starting from formal dimension 4, there are locally similar double complexes with different zigzag multiplicities.
\end{Theorem}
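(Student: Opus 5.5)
The plan is to construct, in formal dimension $4$ (bidegrees $0\le p,q\le 4$), two explicit finite formal sums of zigzags $Z_1,\dots,Z_k$ and $Z'_1,\dots,Z'_l$ that are not equal as multisets but have the same Hodge, conjugate Hodge, Bott--Chern, Aeppli and Varouchas numbers in every bidegree. Since every cohomology is additive on direct sums and vanishes on squares, it suffices to compute, for each zigzag, its contribution to each of these cohomologies. These contributions are read off locally from the shape. A bidegree $(p,q)$ contributes to $H_{\overline\partial}$ exactly when it is an endpoint of the zigzag that is not the target or source of a vertical arrow, and similarly for $H_\partial$. It contributes to $H_{BC}$ when it is a ``source-type'' corner with no incoming arrows, and to $H_A$ when it is a ``sink-type'' corner with no outgoing arrows. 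The Varouchas cohomologies, following \cite{JV}, pick out the interior corners and the lengths of the adjacent arrows.

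A key observation is that all of these invariants are \emph{local}. Each is a sum, over the components of a zigzag, of a function depending only on that component and its immediate neighbours within the zigzag. So I would look for a relation of the form
\[
Z_A + Z_B = Z_C + Z_D ,
\]
where $Z_A,Z_B$ are long zigzags that cross or abut in a common region, and $Z_C,Z_D$ are obtained by ``swapping tails'' at a place where both have the same local shape. For example, take two zigzags of the same parity that each pass through some bidegree with the same local configuration (say both have a horizontal arrow into $(p,q)$ followed by a vertical arrow out). Cutting both at that point and regluing the first half of one to the second half of the other preserves every local count, because the multiset of local configurations is unchanged. The resulting pair differs from the original as a multiset of zigzags, provided the two zigzags are genuinely different on both sides of the cut.

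The concrete steps are as follows.
\begin{enumerate}
\item Make the locality precise: for each cohomology, write its dimension in bidegree $(p,q)$ as a sum over components of the zigzag of an indicator depending only on the component's in- and out-arrows.
\item Check that this holds also for the Varouchas spaces, which are defined through images and kernels of $\partial,\overline\partial$ on $H_{BC}$ and $H_A$.
\item Exhibit in a $5\times 5$ grid two zigzags of length at least $3$ that share a common interior configuration at one bidegree and differ on both sides, and perform the swap.
\end{enumerate}

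For step 3 I would try zigzags placed along the antidiagonals $p+q=3$ and $p+q=4$ and $p+q=5$, e.g.\ the zigzag with components $(0,3),(1,3),(1,2),(2,2)$ spanning total degrees $3,4$, against a shifted one sharing the $(1,2)\to(2,2)$ step. If the corner constraint or the requirement that both sides really differ fails below dimension $4$, this explains the threshold. To argue that the threshold is sharp, one can also note the heuristic count: the number of zigzag multiplicities exceeds the number of independent local equations once $n\ge 4$.

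The main obstacle is the Varouchas cohomologies. Their dimensions are not purely about endpoints, so I must verify that they are indeed local in the above sense, i.e.\ depend only on neighbouring arrows. If they turn out to depend on more, such as whether the next-but-one arrow exists, then the swap point must be chosen in the middle of a longer common segment, of length $2$ or $3$. This is where dimension $4$ becomes necessary: it gives room for two distinct zigzags to share such a segment. Once locality to radius $2$ is established, the explicit example is a finite check of a table of bidegrees.
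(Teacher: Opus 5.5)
Your recombination mechanism is sound, and it is what the paper's explicit examples are built from. Besides the count ($O(n^2)$ local equations against the cubic rank of $\overline{\mathcal{U}^{\text{form}}_n}$), the paper exhibits $Z^{0,1}_3+Z^{1,0}_3$ versus $Z^{0,0}_3+Z^{1,1}_3$ (from the $Z^{v\pm}_k$). At the end of the Bigolin subsection it also gives a dimension-$4$ pair that is exactly four reflected copies of your step-3 swap along $(1,2)\to(2,2)$: eight zigzags of length $4$ versus four of length $6$ and four of length $2$.

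The Varouchas groups are no obstacle. Like $H_{\overline{\partial}}$, $H_{BC}$ and $H_A$, each is defined at one bidegree from kernels and images of $\partial,\overline{\partial}$ into and out of $(p,q)$, together with $\partial\overline{\partial}$, which vanishes on a minimal model. So each is a fixed combination of the counts of the seven puzzle pieces at $(p,q)$; nothing of radius two enters, and the paper's $9\times 7$ matrix records exactly this. You also have Bott--Chern and Aeppli interchanged: $H_{BC}$ sees components with no outgoing arrow, $H_A$ those with no incoming arrow.

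Your swap rule, however, is misstated and does not fit your own example. In a zigzag every interior component is either a sink (arrows in from the left and from below) or a source (arrows out upward and to the right). ``Horizontal in, vertical out'' cannot occur, since it would force a square. In your step-3 pair, $(1,2)$ is a two-arrow source in one zigzag and a one-arrow endpoint in the other, and likewise for $(2,2)$, so no shared bidegree carries the same piece. The correct rule is to cut at a shared \emph{arrow} $a\to b$, or at a shared component carrying the same piece. Since a component's piece depends only on its incident arrows, regluing preserves the multisets of pieces at $a$ and at $b$.

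The substantive gap is that ``formal dimension $n$'' refers to $\text{DC}^{\text{form}}_n$. The zigzag multiset must be invariant under $(p,q)\mapsto(q,p)$ and under $(p,q)\mapsto(n-q,n-p)$ with arrows reversed, and only dots may sit in the corners. A single swap in a $5\times 5$ grid is not such an object. Without these constraints the number $4$ is meaningless, since your mechanism already works in a $3\times 3$ grid. You need to:
\begin{itemize}
\item add the images of the swap under the Klein four-group;
\item observe that both reflections permute the seven pieces, so local similarity is preserved by the action and by direct sums;
\item check that the corners are avoided;
\item check that the symmetrized sums still differ (here by zigzag lengths).
\end{itemize}
``Starting from'' also requires every $n\geq 4$, not only $n=4$. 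The support $\{(0,3),(1,3),(1,2),(2,2),(2,1),(3,1)\}$ and its reflected images avoid the corners of $[0,n]^2$ for all $n\geq 4$, which settles this.

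Finally, drop the threshold speculation. The statement only asserts existence, and neither the radius of locality nor the heuristic count says anything about small $n$. In fact the threshold is not forced by your mechanism. Recombining $(0,1)\to(1,1)\leftarrow(1,0)$ and $(0,2)\leftarrow(0,1)\to(1,1)\leftarrow(1,0)\to(2,0)$ at $(1,1)$ gives two length-$4$ zigzags exchanged by conjugation. Adding the duals yields locally similar, non-isomorphic objects already in $\text{DC}^{\text{form}}_3$. What genuinely needs $n\geq 4$ is local similarity together with equal Betti numbers, as for the $Z^{v\pm}_k$. In dimension $3$ the kernel of the Betti, Hodge and Bott--Chern numbers is spanned by $\mathcal{T}$, and $\mathcal{T}$ is not locally similar: the count of two-incoming-arrow pieces at $(1,1)$ is $1$ in $\mathcal{T}_+$ and $0$ in $\mathcal{T}_-$.
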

We also give an abstract construction 
of such double complexes which also have the same Betti numbers and Frölicher spectral sequences.

Moreover, despite the similar growth rates, the Bigolin numbers are not sufficient to determine the multiplicities of all zigzags. 
Computations involving the Bigolin complex and other cohomological invariants in dimension $3$ 
have been done in (\cite{Piovani}). For some fixed bidegree $(p, q)$ and total degree $k$, 
we describe the zigzags which are counted by the Bigolin number $b^k_{p, q}$. 
Ultimately, we prove the following:
\begin{Theorem}
The Bigolin numbers determine the multiplicities of all odd zigzags. On the other hand, starting in formal dimension $n = 4$, there exist double complexes with the same Bigolin numbers but with different multiplicities of even zigzags.
\end{Theorem}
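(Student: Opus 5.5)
We need to recall the Bigolin cohomology. The Bigolin complex for fixed $(p,q)$ is the total-degree complex built from the "corner" subspaces: in degree $k$ it is $\bigoplus A^{r,s}$ with $r+s=k$, $r\ge p$, $s\ge q$ (or the appropriate truncation), with differential $d$ projected, interpolating Bott-Chern and Aeppli. Its cohomology $b^k_{p,q}$ is additive and vanishes on squares, so each $b^k_{p,q}$ is a $\mathbb{Z}_{\ge0}$-linear combination of zigzag multiplicities.

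The first step is a local computation: for each zigzag $Z$ (given by its shape and position) and each triple $(k,p,q)$, we compute $b^k_{p,q}(Z)$. A zigzag is a connected staircase of components on two adjacent antidiagonals $k$ and $k+1$. The truncation by the quadrant $\{r\ge p, s\ge q\}$, with boundary corrections, cuts $Z$ into pieces, and each piece contributes $0$ or $1$ according to whether its endpoints lie inside or outside the quadrant. We expect that $b^k_{p,q}$ counts the zigzags on antidiagonals $k, k+1$ whose "ends" straddle the corner $(p,q)$ in a prescribed way. For odd zigzags we expect these ends to be well defined and non-symmetric: an odd zigzag has both endpoints on the same antidiagonal, and one extremal component $(r_1,s_1)$ lies at the far top-left and the other $(r_2,s_2)$ at the far bottom-right.

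The second step is to prove that the odd multiplicities are determined. We would like $b^k_{p,q}$ to count, up to explicit corrections, the odd zigzags in degree $k$ whose endpoints satisfy $r_1< p\le r_2$-type inequalities. This corresponds to a cumulative, two-dimensional distribution function of the endpoint pair. We then apply a finite difference (inclusion–exclusion) in $p$ and $q$, i.e. $b^k_{p,q}-b^k_{p+1,q}-b^k_{p,q+1}+b^k_{p+1,q+1}$, combined with known Bott-Chern/Aeppli data, which are themselves Bigolin numbers at extreme $(p,q)$. Doing so, we isolate the multiplicity of the odd zigzag with given endpoints. Even zigzags, which have endpoints on different antidiagonals, should contribute to $b^k_{p,q}$ and $b^{k+1}_{p,q}$ in a way that cancels in this difference, or be handled by a triangular induction on length. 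We expect this to be the main obstacle: establishing that the contributions of even zigzags cancel exactly, which requires a careful case analysis of how the quadrant boundary meets an even zigzag.

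For the negative part, we compare dimensions. In formal dimension $4$ we count the even zigzag shapes allowed in the bounded box together with the independent Bigolin equations, and we exhibit a nonzero integer combination of even zigzags on which all $b^k_{p,q}$ vanish. A natural candidate is a combination of long even zigzags $Z_1+Z_2-Z_3-Z_4$, where the two pairs have the same endpoint sets but are paired differently across antidiagonals, so that the endpoint counts agree. Splitting this combination into positive and negative parts gives two double complexes with the same Bigolin numbers but different even multiplicities. We verify this using the local formula from the first step. Dimension $4$ is needed so that the zigzags are long enough to allow such a rearrangement, and direct sums with trivial complexes extend the example to all $n\ge4$.
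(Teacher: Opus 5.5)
Your outline has the same shape as the paper's argument: a mixed second difference in $(p,q)$ for the odd part, and a rearrangement of even zigzags for the negative part. However, the step you flag as ``the main obstacle'' is exactly the content of the proof, and you leave it open. What is missing is an explicit count of which zigzags contribute to which Bigolin number, and for that you need the correct complex. For $k\le p+q$ it lives on the bidegrees $r\le p$, $s\le q$ of total degree $k$; for $k>p+q$ it lives on the bidegrees $r>p$, $s>q$ of total degree $k+1$; the two halves are joined by $\partial\overline{\partial}$. It is not the quadrant $r\ge p$, $s\ge q$ you describe. On a minimal representative, the Bigolin number $h^k_{p,q}$ (your $b^k_{p,q}$) is then the $k$-th Betti number of the complex cut from above at $q$ and from the right at $p$, resp.\ the $(k+1)$-st Betti number after cutting from below at $q$ and from the left at $p$, so contributions can be read off pictures.

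The decisive fact (Lemma~\ref{lemme2}) is that a horizontal even zigzag $Z^{u,v}_{1,l}$ contributes to $h^k_{p,q}$ exactly when $u+v=k$ and $u\le p<u+l$, \emph{whatever $q$ is}. Symmetrically, a vertical even zigzag contributes independently of $p$. So on even zigzags $h^k_{p,q}$ has the form $f_k(p)+g_k(q)$, which the mixed difference annihilates; that is the whole cancellation mechanism. Your fallback, a ``triangular induction on length'', cannot work as a way of solving for the even multiplicities. By the second half of the very statement you are proving, they are not determined by the Bigolin numbers, so they have to be eliminated exactly.

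There is a second gap: the mixed difference does not isolate one odd zigzag. The odd contributions are not straddling conditions like $r_1<p\le r_2$. They are quadrant conditions on the index $(u,v)$ of Notation~\ref{notations_zigzags}, which mixes the column of one end with the row of the other. By Lemma~\ref{lemme1}, $Z^{u,v}_k$ counts in $h^k_{p,q}$ iff $u\le p$ and $v\le q$, $Z^{u,v}_{k+1}$ counts iff $u\ge p+1$ and $v\ge q+1$, and no other odd zigzag counts. Hence $\tilde h^k_{p,q}=h^k_{p-1,q-1}-h^k_{p-1,q}-h^k_{p,q-1}+h^k_{p,q}$ is the \emph{sum} of the multiplicities of $Z^{p,q}_k$ and $Z^{p,q}_{k+1}$. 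You still need the alternating sum $\tilde h^k_{p,q}-\tilde h^{k-1}_{p,q}+\cdots$, which telescopes by boundedness to the multiplicity of $Z^{p,q}_{k+1}$ (Theorem~\ref{submain}). Bott--Chern and Aeppli numbers play no role here. Incidentally, they are $h^{p+q}_{p,q}=h_{\text{A}}^{p,q}$ and $h^{p+q+1}_{p,q}=h_{\text{BC}}^{p+1,q+1}$, not Bigolin numbers at extreme $(p,q)$.

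For the negative half, your ``same endpoint multisets, paired differently'' idea is the right mechanism once the even formula is available. The sum of the indicator functions of the column intervals $[u,u+l-1]$ depends only on the multisets of left and right endpoints. The paper's dimension-$4$ example is of exactly this kind: $[0,1],[1,2]$ versus $[0,2],[1,1]$ for horizontal zigzags starting in degree $3$, $[1,2],[2,3]$ versus $[1,3],[2,2]$ in degree $4$, plus the mirror-image vertical zigzags. But you must actually write such a pair down and check it. Your explanation of why dimension $4$ is needed is also off: the paper already exhibits nonzero elements of $\ker\phi_{\mathcal{B}}$ in dimension $3$, obtained by splitting $[0,1]$ into $[0,0]$ and $[1,1]$.
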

In dimension $4$, we give an example of two double complexes sharing the same Bigolin, Betti, Hodge, Bott-Chern, Aeppli and Varouchas numbers.

If instead of looking at all compact complex manifolds we consider only a certain geometric class, then we can hope for better results. We illustrate this with compact complex nilmanifolds in section~\ref{lie}.
It is conjectured that the double complex of differential forms for nilmanifolds 
is quasi-isomorphic to the double complex of left-invariant differential forms. In many cases this is known to be true (\cite{CF}), and therefore the double complex of differential forms 
is quasi-isomorphic to the {\sl Chevalley-Eilenberg complex} of the Lie algebra $\mathfrak{g}$. 
In real dimension $6$, nilpotent Lie algebras admitting complex structures have been classified in~\cite{Sal}, and their complex structures have been classified up to equivalence in~\cite{COUV}. The Betti and Bott-Chern numbers of these structures are computed in~\cite{Ang2}. Interestingly, the Lie algebra $\mathfrak{h}_5$ admits two complex structures $J_1$ and $J_2$ which have the same Betti, Hodge and Bott-Chern numbers, but whose double complexes are non-isomorphic:

\begin{Theorem}
On the real $6$-dimensional Lie algebra $h_5$, there are two nilpotent complex structures which have the same Betti, Hodge and Bott-Chern numbers but different multiplicities of zigzags.
%%%%%%%%%%%%%%%%%%%%%%%%%%%%%%%%%%%
\[
\begin{tikzpicture}[baseline=5.8ex, inner sep=0pt, outer sep=0pt, remember picture]
\draw (0, 0) -- (2, 0) -- (2, 2) -- (0, 2) -- (0, 0);
\draw (0, .5) -- (2, .5);
\draw (0, 1) -- (2, 1);
\draw (0, 1.5) -- (2, 1.5);
\draw (.5, 0) -- (.5, 2);
\draw (1, 0) -- (1, 2);
\draw (1.5, 0) -- (1.5, 2);
\draw (0.25, 1.15) -- (0.25, .6) -- (.6, .6) -- (.6, .25) -- (1.15, .25);
\draw (1.75, .85) -- (1.75, 1.4) -- (1.4, 1.4) -- (1.4, 1.75) -- (.85, 1.75);
\filldraw (.1, .6) circle (1pt);
\filldraw (.1, .7) circle (1pt);
\filldraw (1.9, 1.4) circle (1pt);
\filldraw (1.9, 1.3) circle (1pt);
\filldraw (.6, .1) circle (1pt);
\filldraw (.7, .1) circle (1pt);
\filldraw (1.4, 1.9) circle (1pt);
\filldraw (1.3, 1.9) circle (1pt);
\filldraw (.65, .65) circle (1pt);
\filldraw (.7, .75) circle (1pt);
\filldraw (.75, .65) circle (1pt);
\filldraw (1.35, 1.35) circle (1pt);
\filldraw (1.3, 1.25) circle (1pt);
\filldraw (1.25, 1.35) circle (1pt);
\draw (.35, 1.1) -- (.65, 1.1) -- (.65, .9);
\draw (.35, 1.2) -- (.75, 1.2) -- (.75, .9);
\draw (1.1, .35) -- (1.1, .65) -- (.9, .65);
\draw (1.2, .35) -- (1.2, .75) -- (.9, .75);
\draw (.9, 1.65) -- (.9, 1.35) -- (1.1, 1.35);
\draw (.8, 1.65) -- (.8, 1.25) -- (1.1, 1.25);
\draw (1.65, .9) -- (1.35, .9) -- (1.35, 1.1);
\draw (1.65, .8) -- (1.25, .8) -- (1.25, 1.1);
\filldraw (.6, 1.4) circle (1pt);
\filldraw (.6, 1.3) circle (1pt);
\filldraw (.7, 1.4) circle (1pt);
\filldraw (.7, 1.3) circle (1pt);
\filldraw (1.4, .6) circle (1pt);
\filldraw (1.4, .7) circle (1pt);
\filldraw (1.3, .6) circle (1pt);
\filldraw (1.3, .7) circle (1pt);
\draw (1, 1) circle (4pt);
\filldraw (.2, .2) circle (1pt);
\filldraw (1.8, .2) circle (1pt);
\filldraw (.2, 1.8) circle (1pt);
\filldraw (1.8, 1.8) circle (1pt);
\end{tikzpicture}
\quad \quad
\begin{tikzpicture}[baseline=5.8ex, inner sep=0pt, outer sep=0pt, remember picture]
\draw (0, 0) -- (2, 0) -- (2, 2) -- (0, 2) -- (0, 0);
\draw (0, .5) -- (2, .5);
\draw (0, 1) -- (2, 1);
\draw (0, 1.5) -- (2, 1.5);
\draw (.5, 0) -- (.5, 2);
\draw (1, 0) -- (1, 2);
\draw (1.5, 0) -- (1.5, 2);
\draw (.35, 1.1) -- (.65, 1.1) -- (.65, .9);
\draw (.35, 1.2) -- (.75, 1.2) -- (.75, .9);
\draw (1.1, .35) -- (1.1, .65) -- (.9, .65);
\draw (1.2, .35) -- (1.2, .75) -- (.9, .75);
\draw (.9, 1.65) -- (.9, 1.35) -- (1.1, 1.35);
\draw (.8, 1.65) -- (.8, 1.25) -- (1.1, 1.25);
\draw (1.65, .9) -- (1.35, .9) -- (1.35, 1.1);
\draw (1.65, .8) -- (1.25, .8) -- (1.25, 1.1);
\filldraw (.2, .2) circle (1pt);
\filldraw (1.8, .2) circle (1pt);
\filldraw (.2, 1.8) circle (1pt);
\filldraw (1.8, 1.8) circle (1pt);
\filldraw (.1, .6) circle (1pt);
\filldraw (.1, .7) circle (1pt);
\filldraw (1.9, 1.4) circle (1pt);
\filldraw (1.9, 1.3) circle (1pt);
\filldraw (.6, .1) circle (1pt);
\filldraw (.7, .1) circle (1pt);
\filldraw (1.4, 1.9) circle (1pt);
\filldraw (1.3, 1.9) circle (1pt);
\draw (0.25, 1.15) -- (0.25, .75) -- (.6, .75);
\draw (1.75, .85) -- (1.75, 1.25) -- (1.4, 1.25);
\draw (.75, .6) -- (.75, .25) -- (1.15, .25);
\draw (1.25, 1.4) -- (1.25, 1.75) -- (.85, 1.75);
\filldraw (.6, 1.4) circle (1pt);
\filldraw (.6, 1.3) circle (1pt);
\filldraw (.7, 1.35) circle (1pt);
\filldraw (1.4, .6) circle (1pt);
\filldraw (1.4, .7) circle (1pt);
\filldraw (1.3, .65) circle (1pt);
\draw (.85, 1.1) -- (.85, .85) -- (1.1, .85);
\draw (.9, 1.15) -- (1.15, 1.15) -- (1.15, .9); 
\filldraw (.65, .65) circle (1pt);
\filldraw (.75, .75) circle (1pt);
\filldraw (1.35, 1.35) circle (1pt);
\filldraw (1.25, 1.25) circle (1pt);
\end{tikzpicture}
\]
These double complexes can also be obtained for some complex structures on $\mathfrak{h}_2$ and $\mathfrak{h}_4$. Furthermore, in dimension $6$, these are the only double complexes coming from nilmanifolds that have the same Betti, Hodge and Bott-Chern numbers but different zigzag multiplicities.
\end{Theorem}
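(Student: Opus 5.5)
The proof is a direct computation with left-invariant forms. By the result recalled in the introduction (\cite{CF}), for nilmanifolds with these complex structures the double complex of forms is quasi-isomorphic to the Chevalley--Eilenberg double complex $(\Lambda^{\bullet,\bullet}\mathfrak{g}^*_{\mathbb{C}},\partial,\overline{\partial})$. Since the zigzag multiplicities are invariant under such quasi-isomorphisms, it suffices to decompose these finite-dimensional double complexes.

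\textbf{Computing the decompositions.} I start from the classification of nilpotent complex structures on $\mathfrak{h}_5$ in \cite{COUV}. Each structure is given by a coframe $\omega^1,\omega^2,\omega^3$ of $(1,0)$-forms with $d\omega^1=d\omega^2=0$ and $d\omega^3$ a combination of $\omega^{12},\omega^{1\bar1},\omega^{1\bar2},\omega^{2\bar1},\omega^{2\bar2}$ depending on parameters. I pick two structures $J_1,J_2$ in different families (the table in \cite{Ang2} identifies families with equal Betti, Hodge and Bott--Chern numbers) and compute $\partial,\overline{\partial}$ on all $64$ basis monomials $\omega^{I\bar J}$. I then split off squares and zigzags bidegree by bidegree, using the standard algorithm from section~\ref{deux}. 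First take $\operatorname{im}\partial\overline{\partial}$, which generates the squares. On the remaining quotient, read off the zigzags from the maps between $H_{BC}$, $H_A$ and the Dolbeault pieces, or equivalently follow chains $x\mapsto \overline{\partial}x$ and $\partial y=\overline{\partial}x$ until they terminate.

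Serre-type duality (the symmetry $(p,q)\leftrightarrow(3-p,3-q)$) and complex conjugation halve the work, so I only need bidegrees with $p+q\le 3$ plus the middle. The structure of $d$ means only the $\omega^3$-containing monomials are non-closed, which keeps the computation short. The expected outcome is the two pictures: $J_1$ has two length-5 zigzags through $(1,1)$ and $(2,2)$ together with a point in $(\tfrac32,\tfrac32)$-symmetric position; $J_2$ instead has shorter zigzags (L-shapes plus a length-4 ring around the centre). Computing $b^k$, $h^{p,q}$ and $h_{BC}^{p,q}$ from each picture by counting the appropriate zigzag ends confirms that they agree, while the multiplicities visibly differ. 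This proves the main claim.

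\textbf{Remaining claims.} For $\mathfrak{h}_2$ and $\mathfrak{h}_4$, I exhibit explicit parameter values in the \cite{COUV} families whose decompositions coincide with these two pictures. For the uniqueness statement, I run through the whole classification in real dimension $6$ (the algebras $\mathfrak{h}_1,\dots,\mathfrak{h}_{16},\mathfrak{h}_{19}^-,\mathfrak{h}_{26}^+$ with their families of structures). For each family I compute the zigzag decomposition as a function of the parameters, stratifying the parameter space where ranks of $\partial,\overline{\partial},\partial\overline{\partial}$ jump. I then group the resulting finitely many decompositions by their (Betti, Hodge, Bott--Chern) data and check that only one group contains two distinct decompositions.

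\textbf{Main obstacle.} The hardest step is this exhaustive case analysis. The families depend on complex parameters, and the rank conditions cut out loci such as $|\rho|=\ldots$ or $\lambda=\ldots$ that must be tracked carefully. To keep it manageable, I would first reduce the comparison using the equations from section~\ref{deux}: given the Betti, Hodge and Bott--Chern numbers, the zigzag multiplicities in dimension $3$ are determined up to a small kernel. The only ambiguity compatible with the corner and symmetry constraints is exchanging the pair (length-5 zigzags plus the central dot) with (L-shapes plus the length-4 ring). So I only need to locate the structures whose decomposition contains one of these two configurations, which rules out most families by inspecting $H^{1,1}$ and $H^{2,1}$.
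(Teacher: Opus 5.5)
Your plan could work in principle, and the rank-one kernel you invoke at the end is indeed the paper's key input. But the existence part is never carried out, and the reduction you propose is stated incorrectly.

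\textbf{Existence and the kernel element.} You never specify $J_1,J_2$. The paper uses a single family, $d\omega^1=d\omega^2=0$, $d\omega^3=\omega^{12}+\omega^{1\overline{1}}+D\omega^{2\overline{2}}$, and compares $D$ real with $D>-\frac14$, $D\neq 0,\frac12$ against $D=\frac12+iy$ with $0<y<\frac{\sqrt3}{2}$. You also misread both pictures:
\begin{itemize}
\item The circle at the centre of the first picture is the unique square, occupying $(1,1),(2,1),(1,2),(2,2)$; it is not a dot.
\item The second complex has no square. Its central ``ring'' is two distinct length-$3$ zigzags, $(1,2)\leftarrow(1,1)\rightarrow(2,1)$ and $(1,2)\rightarrow(2,2)\leftarrow(2,1)$.
\end{itemize}
Hence the kernel of (Betti, Hodge, Bott--Chern) on $\overline{\mathcal{U}^{\text{form}}_3}$ is spanned by $\mathcal{T}=\mathcal{T}_+-\mathcal{T}_-$. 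Here $\mathcal{T}_+$ is the two length-$5$ zigzags plus one dot in each of $(1,1),(2,1),(1,2),(2,2)$. $\mathcal{T}_-$ is the length-$3$ zigzags on $\{(0,2),(0,1),(1,1)\}$, $\{(1,1),(1,0),(2,0)\}$, $\{(2,2),(2,3),(1,3)\}$, $\{(2,2),(3,2),(3,1)\}$, plus the two central ones above. Your ``central dot versus length-$4$ ring'' cannot be this element: a ring-shaped piece is a square, which is zero in $\overline{\mathcal{U}^{\text{form}}_3}$.

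\textbf{Uniqueness.} The idea you are missing, which replaces the exhaustive stratification, is that squares are invisible to every cohomology but not to dimensions. The paper shows:
\begin{itemize}
\item a square can only occupy $(1,1),(2,1),(1,2),(2,2)$;
\item there is at most one;
\item its absence is an explicit condition on the structure constants.
\end{itemize}
Write $\Lambda^{\bullet,\bullet}\mathfrak{g}^*=K\oplus\delta S_{1,1}$ with $K$ minimal. Equal invariants give $[K]-[K']=a\mathcal{T}$. Since $\mathcal{T}$ has $(1,1)$-dimension $2-3=-1$ and both $\Lambda^{1,1}$ have dimension $9$, you get $\delta'-\delta=-a$. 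So $a=\pm1$, and the complex carrying $\mathcal{T}_+$ is exactly the one with the square — the opposite of what you wrote, so your search for ``the two configurations'' targets the wrong structures.

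From there the argument is short:
\begin{itemize}
\item the summand $Z^{2,2}_2$ forces a non-abelian structure;
\item comparing dots in bidegree $(1,0)$ forces $d\omega^2=0$;
\item one side is pushed to $B=0$, $\lambda=0$, $D$ real, and the other to the no-square criterion.
\end{itemize}

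\textbf{Incomplete classification.} An enumeration over the published classification of \cite{COUV} is not exhaustive. As the paper notes, the structures realizing the first complex on $\mathfrak{h}_2$ ($D<-\frac14$) and $\mathfrak{h}_4$ ($D=-\frac14$) are missing there. Exhibiting ``parameter values in the COUV families'' would therefore not give that complex on those algebras. You must work from the general reduced equations \eqref{eq_struct_6} instead.
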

%%%%%%%%%%%%%%%%%%%%%%%%%%%%%%%%%%%

Moreover, we revisit the {\sl almost abelian nilmanifolds} considered in \cite{AABRW}, which are nilmanifolds with the Lie algebra having an abelian subalgebra of codimension $1$. Their complex structures are studied in~\cite{AABRW}, and the Betti and Hodge numbers are computed by the means of $\mathfrak{sl}_2(\mathbb{C})$ representation theory. We treat a subclass of almost abelian nilmanifolds for which a certain parameter $k_0 = 0$, and describe the decomposition of their double complexes into irreducible parts.
\begin{Theorem}
The Chevalley-Eilenberg complex of any almost abelian Lie algebra with a complex structure such that $k_0=0$ in its structure equations~\ref{str_eq_ab} is $dd^c+3$, i.e. the only nontrivial zigzags have length 1 or 3. Moreover, there is a representation theoretic formula for all multiplicities of zigzags.
\end{Theorem}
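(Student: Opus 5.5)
The plan is to work directly with the Chevalley--Eilenberg double complex $\Lambda^{\bullet,\bullet}\mathfrak{g}^*_{\mathbb{C}}$ and to exploit the structure equations (\ref{str_eq_ab}) under the assumption $k_0=0$. I expect these equations to take the following shape. There is a basis of $(1,0)$-forms $\omega^1,\dots,\omega^n$, with $\omega^n$ spanning the complement of the abelian ideal, and $d\omega^i$ is a linear combination of $\omega^n\wedge(\cdot)$ and $\overline{\omega}^n\wedge(\cdot)$ applied to the remaining coframe. Setting $k_0=0$ should mean that the bigraded pieces of $d$ have a particularly simple form. Concretely, I expect
\[
\partial = \omega^n\wedge A, \qquad \overline{\partial}=\overline{\omega}^n\wedge B,
\]
where $A$ and $B$ are derivations of $\Lambda V$, with $V$ the span of the forms not involving $\omega^n,\overline{\omega}^n$. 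These derivations are induced by nilpotent endomorphisms $N$ of $V^{1,0}$ and $\overline{N}$ of $V^{0,1}$, and they commute. The first step is therefore to rewrite the whole double complex as
\[
\Lambda V \otimes \Lambda\langle \omega^n,\overline{\omega}^n\rangle,
\]
that is, as four copies of $\Lambda V$ with explicit maps between them.

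The second step is to decompose $\Lambda V$ using $\mathfrak{sl}_2(\mathbb{C})$ theory, as in \cite{AABRW}. The nilpotent $N$ on $V^{1,0}$ extends to an $\mathfrak{sl}_2$-triple by Jacobson--Morozov, and the induced action on $\Lambda^pV^{1,0}$ is again an $\mathfrak{sl}_2$-representation with $N$ acting as the raising operator; the same holds for $\overline{N}$ on the conjugate side. Hence $\Lambda^{p}V^{1,0}\otimes\Lambda^qV^{0,1}$ splits as a sum of tensor products $V_a\boxtimes V_b$ of irreducible strings for $\mathfrak{sl}_2\times\mathfrak{sl}_2$. On such a piece, $A$ and $B$ act as the two commuting raising operators. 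Choosing Jordan-type bases of the two strings, the piece of the double complex carried by $V_a\boxtimes V_b\otimes\Lambda\langle\omega^n,\overline{\omega}^n\rangle$ becomes a direct sum of explicit small double complexes. I will need to keep track of signs coming from moving $\omega^n$ past forms, but these do not affect the shape of the decomposition.

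The third step is to decompose each of these small pieces, and this is where I expect the main obstacle to lie. Take a basis vector $x=e_i\otimes f_j$ of $V_a\boxtimes V_b$, and look at the four elements
\[
x,\qquad \omega^n x,\qquad \overline{\omega}^n x,\qquad \omega^n\overline{\omega}^n x .
\]
We have $\partial x=\omega^n Ax$ and $\overline{\partial}x=\overline{\omega}^nBx$, together with $\overline{\partial}(\omega^n x)=-\omega^n\overline{\omega}^n Bx$ and similar formulas. When neither $e_i$ nor $f_j$ is a top vector, these elements form squares. When $e_i$ or $f_j$ is a top vector, which corresponds to $\ker A$ or $\ker B$, the pieces degenerate into zigzags. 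The possible outcomes are dots, coming from $x$ top for both and closed, and length-3 zigzags of L-shape, coming from $\omega^n\overline{\omega}^n$ times a top vector paired with lower vectors.

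I will verify that no zigzag of length 2 or of length $\geq 4$ arises. The cleanest way is to compute the Bott--Chern and Aeppli cohomologies, or equivalently the $\partial\overline{\partial}$-type invariants, of a single block $V_a\boxtimes V_b$. I would then check that the numerical criterion for being ``$dd^c+3$'' holds on each block, namely that the invariants satisfy the equalities characterizing the case where only dots and length-3 zigzags occur. This amounts to a finite computation with two strings. The degenerate case $a=0$ or $b=0$ has to be handled separately.

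Finally, I will count the summands. The number of dots in bidegree $(p,q)$ and the number of length-3 zigzags of each orientation, with corner at $(p,q)$, are expressed as sums, over the irreducible constituents $V_a\boxtimes V_b$ of $\Lambda^{p'}V^{1,0}\otimes\Lambda^{q'}V^{0,1}$, of simple functions of $a$ and $b$, with $(p',q')$ shifted by the $\omega^n,\overline{\omega}^n$ degrees. The multiplicities of these constituents are given by plethysm, that is, by the $\mathfrak{sl}_2$-characters of the exterior powers of the Jordan type of $N$. This yields the representation theoretic formula claimed in the statement.
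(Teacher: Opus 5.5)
\textbf{Gap: the model of $\partial$ and $\overline{\partial}$ is wrong, and step 3 fails.}

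With $k_0=0$ the structure equations give $d\beta^j_i=(\alpha+\overline{\alpha})\wedge\beta^j_{i-1}$. Hence $\partial\beta^j_i=\alpha\wedge\beta^j_{i-1}$ and $\overline{\partial}\beta^j_i=\overline{\alpha}\wedge\beta^j_{i-1}$. By conjugation, also $\partial\overline{\beta^j_i}=\alpha\wedge\overline{\beta^j_{i-1}}$ and $\overline{\partial}\,\overline{\beta^j_i}=\overline{\alpha}\wedge\overline{\beta^j_{i-1}}$. So $\partial=\alpha\wedge A$ and $\overline{\partial}=\overline{\alpha}\wedge A$ with one and the same derivation $A$. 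It is induced by $N\oplus\overline{N}$ on all of $V$, so it acts diagonally, as $N\otimes 1+1\otimes\overline{N}$, on $\Lambda^pV^{1,0}\otimes\Lambda^qV^{0,1}$.

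You instead take $A=N\otimes 1$ and $B=1\otimes\overline{N}$ as two independent raising operators. That would force $\overline{\partial}\beta^j_i=0$ and $d\beta^j_i\in\Lambda^{2,0}$, contradicting the equations above. Inside your own model, step 3 cannot give what you claim. The block $V_a\otimes V_b\otimes\Lambda\langle\omega^n,\overline{\omega}^n\rangle$ is the tensor product of the horizontal complex $V_a\xrightarrow{N}V_a$ and the vertical complex $V_b\xrightarrow{\overline{N}}V_b$. It therefore splits into $(a-1)(b-1)$ squares, $2(a-1)+2(b-1)$ zigzags of length $2$, and four dots. There are no L-shaped length-3 zigzags at all, and your Bott--Chern/Aeppli check would detect length-2 zigzags rather than confirm $dd^c+3$.

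\textbf{The missing idea is this coupling of $\partial$ and $\overline{\partial}$ through a single nilpotent operator.} The paper does the following:
\begin{enumerate}
\item Take one $\mathfrak{sl}_2(\mathbb{C})$ acting diagonally (Jacobson--Morozov for $A$ on $\mathfrak{b}^*$, extended by the Leibniz rule).
\item Decompose $\Lambda^{p,q}\mathfrak{b}^*=\bigoplus_k c_k^{p,q}W_k$ into $A$-strings $b,Ab,\dots,A^{k-1}b$.
\item Split each $\langle b,\dots,A^{k-1}b\rangle\otimes\Lambda\langle\alpha,\overline{\alpha}\rangle$ by hand into:
 \begin{itemize}
 \item the squares $\langle A^lb,\alpha A^{l+1}b,\overline{\alpha}A^{l+1}b,\alpha\overline{\alpha}A^{l+2}b\rangle$ for $0\le l\le k-3$;
 \item the up-looking zigzag $\langle A^{k-2}b,\alpha A^{k-1}b,\overline{\alpha}A^{k-1}b\rangle$;
 \item the down-looking zigzag $\langle \alpha b,\overline{\alpha}b,\alpha\overline{\alpha}Ab\rangle$;
 \item the dots $A^{k-1}b$ and $\alpha\overline{\alpha}b$ (four dots when $k=1$).
 \end{itemize}
\end{enumerate}
These subcomplexes are spanned by a partition of a basis, so they are direct summands. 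No numerical Bott--Chern criterion is needed, and the multiplicities come out directly in terms of the $c_k^{p,q}$.

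If you want to keep your $\mathfrak{sl}_2\times\mathfrak{sl}_2$ blocks, first restrict $V_a\otimes V_b$ to the diagonal $\mathfrak{sl}_2$ by Clebsch--Gordan. This gives strings of lengths $a+b-1,a+b-3,\dots,|a-b|+1$, to which you then apply the string-by-string splitting above.
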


%%%%%%%%%%%%%%%%%%%%%%%%%%%%%%%%%%%%%%%%%%

%%%%%%%%%%%%%%%%%%%%%%%%%%%%%%%%%%%%%%%%%%

\section{General theory about double complexes}
\label{deux}
The aim of this section is to recall the general definition of a double complex, and to explain how they can be decomposed into irreducible parts. In order to gain information about the decomposition of a double complex, various cohomologies and spectral sequences can be considered.
Ultimately, we want to know how much information is necessary to fully recover the decomposition, and hence the double complex (up to isomorphism).
\subsection{Zigzags and Squares}
We shall start by giving the definition of a general double complex :

\begin{Definition}
A double complex is the data of a bigraded vector space :
\[
\mathcal{A}
=
\bigoplus_{i, j \in \mathbb{Z}}
\mathcal{A}^{i, j},
\]

together with two maps $\partial$, $\overline{\partial}$ of bidegree respectively $( + 1, 0)$ and $(0, + 1)$ :
\[
\aligned
\partial
:
\mathcal{A}^{i, j}
\longrightarrow
\mathcal{A}^{i+1, j}, \\
\overline{\partial}
:
\mathcal{A}^{i, j}
\longrightarrow
\mathcal{A}^{i, j + 1},
\endaligned
\]

that satisfy the following relations : 
\[
\aligned
\partial \partial 
&=
0, \\
\overline{\partial} \overline{\partial}
&=
0, \\
\partial \overline{\partial} + \overline{\partial} \partial
&=
0.
\endaligned
\]

Let $\mathcal{A}^{\bullet, \bullet}$ be a double complex. We say that $\mathcal{A}^{p, q}$ is the {\sl component of degree} $(p, q)$. Furthermore, the {\sl component of total degree} $k$ is defined to be
\[
\bigoplus_{p+q=k}
\mathcal{A}^{p, q}.
\]
\end{Definition}
\begin{Definition}
A double complex $(\mathcal{A}^{\bullet, \bullet}, \partial, \overline{\partial})$ is {\sl bounded} if $\mathcal{A}^{p, q} = 0$ for $|p| + |q|$ large enough.
\end{Definition}
We now define the morphisms between double complexes.
\begin{Definition}
Let $(\mathcal{A}^{\bullet, \bullet}, \partial_A, \overline{\partial}_A)$ and $(\mathcal{B}^{\bullet, \bullet}, \partial_B, \overline{\partial}_B)$ be two double complexes. A morphism $f: \mathcal{A} \rightarrow \mathcal{B}$ is a collection of maps
\[
f^{p, q} : \mathcal{A}^{p, q} \longrightarrow \mathcal{B}^{p, q},
\]
such that $f \circ \partial_A = \partial_B \circ f$ and $f \circ \overline{\partial}_A = \overline{\partial}_B \circ f$.
\end{Definition}
The category of double complexes will be denoted $\text{DC}$. The subcategory of bounded double complexes will be denoted $\text{DC}^{\text{bounded}}$.

\begin{Definition}
We define the direct sum of two double complexes $(\mathcal{A}, \partial_{\mathcal{A}}, \overline{\partial}_{\mathcal{A}})$ and $(\mathcal{B}, \partial_{\mathcal{B}}, \overline{\partial}_{\mathcal{B}})$ by taking the direct sum in each degree. The direct sum is therefore the double complex $(\mathcal{C}, \partial_{\mathcal{B}}, \overline{\partial}_{\mathcal{B}})$ such that for every $i, j \in \mathbb{Z}$ :

\[
\aligned
\mathcal{C}^{i, j}
=
\mathcal{A}^{i, j} \oplus \mathcal{B}^{i, j}, \\
\partial_{\mathcal{C}}
=
\partial_{\mathcal{A}} \oplus \partial_{\mathcal{B}}, \\
\overline{\partial}_{\mathcal{C}}
=
\overline{\partial}_{\mathcal{A}} \oplus \overline{\partial}_{\mathcal{B}}.
\endaligned
\]
If a double complex $\mathcal{C}$ can not be written as a direct sum $\mathcal{C} = \mathcal{A} \oplus \mathcal{B}$ of two proper double complexes, then $\mathcal{C}$ is said to be {\sl irreducible}.
\end{Definition}
There are two types of irreducible double complexes. First of all, we have the {\sl square} :

\[
\begin{tikzcd}
\mathbb{C} \arrow[r, "1"] 
& \mathbb{C} \\
\mathbb{C} \arrow[r, "1"] \arrow[u, "1"]
& \mathbb{C} \arrow[u, "-1"] 
\end{tikzcd},
\]
which can have its components in any bidegree.

The second type of irreducible bounded double complexes are the zigzags. We shall separate the zigzags in two classes: the {\sl odd zigzags} and the {\sl even zigzags}. An odd zigzag is a zigzag having an odd total dimension, and having the following form:
\[
\begin{tikzcd}[row sep=small, column sep = small]
\mathbb{C} & & \\
\mathbb{C} \arrow[u] \arrow[r] & \mathbb{C} & \\
& \mathbb{C} \arrow[u] \arrow[r] & \mathbb{C}
\end{tikzcd},
\quad \quad
\text{or} 
\quad \quad
\begin{tikzcd}[row sep=small, column sep = small]
\mathbb{C}\arrow[r] & \mathbb{C} & \\
& \mathbb{C} \arrow[u] \arrow[r] & \mathbb{C} \\
& & \mathbb{C} \arrow[u]
\end{tikzcd}.
\]
As for the square, such a zigzag can have its components in any bidegree.

The {\sl length} of a zigzag is defined as its total dimension. A zigzag of length $1$ is called a {\sl dot}. For an odd zigzag of length at least $2$, there are two consecutive integers $k$ and $k+1$ such that all of its components are of total degree $k$ or $k+1$. If most of the components are in total degree $l$, we say that the {\sl total degree} of the zigzag is $l$. We shall distinguish the cases where $l = k$ and where $l = k+1$:
\begin{Definition}
\label{odd_zigzags}
Let $Z$ be an odd zigzag that is not of length $1$, and with components in total degree $k$ and $k+1$.
\begin{itemize}
\item $Z$ is said to be {\sl looking up} if its total degree is $k+1$.
\item $Z$ is said to be {\sl looking down} if its total degree is $k$.
\end{itemize}
\end{Definition}

Following \cite{Ste}, we shall introduce the notations:

\begin{Notation}
\label{notations_zigzags}
Let $Z$ be an odd zigzag of total degree $k$. Let $(a, b)$ be the bidegree of its top left component, and $(a', b')$ the bidegree of its bottom right component.
\begin{itemize}
\item If the zigzag $Z$ is looking up, we denote it by $Z^{a', b}_{k}$.
\item If the zigzag $Z$ is looking down, we denote it by $Z^{a, b'}_k$.
\item If the zigzag $Z$ is a dot, we denote it by $Z^{a, b}_k$.
\end{itemize}
\end{Notation}

The previous two zigzags both have length $5$. The zigzag on the left is looking up, while the zigzag on the right is looking down.

An even zigzag is a zigzag of even length, and can be either {\sl horizontal} or {\sl vertical}:
\[
\begin{tikzcd}[row sep=small, column sep = small]
\mathbb{C}\arrow[r] & \mathbb{C} & \\
& \mathbb{C} \arrow[u] \arrow[r] & \mathbb{C}
\end{tikzcd},
\quad \quad
\text{or} 
\quad \quad
\begin{tikzcd}[row sep=small, column sep = small]
\mathbb{C} & \\
\mathbb{C} \arrow[u] \arrow[r] & \mathbb{C} \\
& \mathbb{C} \arrow[u]
\end{tikzcd},
\]
where the components can be placed in any bidegree.

Following the notation introduced in \cite{Ste}, we denote a horizontal even zigzag by
\[
Z^{p, q}_{1, l},
\]
where $(p, q)$ is the degree of the most left component of the sigzag, and $2l$ is the length of the zigzag. Similarly, a vertical zigzag is denoted by
\[
Z^{p, q}_{2, l},
\]
where $(p, q)$ is the degree of the bottom component of the zigzag, and $2l$ is the length of the zigzag.
The following Theorem is proven independently by~\cite{Ste} and~\cite{KQ}:
\begin{Theorem}
Every bounded double complex $(\mathcal{A}^{\bullet, \bullet}, \partial, \overline{\partial})$ is isomorphic to a direct sum
\[
\bigoplus_S S^{\text{mult}_S(\mathcal{A})},
\]
where the sum is taken over all squares and zigzags.
\end{Theorem}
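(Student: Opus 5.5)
We will prove the decomposition by induction on the total dimension, which we may assume finite in each bidegree (for general bounded complexes the same argument runs with the finite-dimensionality replaced by choices of complements). The general strategy is to split off either a square or a zigzag as a direct summand. Throughout we use the total differential $d=\partial+\overline{\partial}$ and its bigraded pieces.

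\emph{Step 1: splitting off squares.} Suppose $\partial\overline{\partial}\neq 0$. Choose a bidegree $(p,q)$ and a bihomogeneous $a\in\mathcal{A}^{p,q}$ with $\partial\overline{\partial}a\neq 0$. Then $a,\partial a,\overline{\partial}a,\partial\overline{\partial}a$ span a subcomplex $S$ isomorphic to a square; these elements are linearly independent because they lie in distinct bidegrees and are nonzero.

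To see that $S$ is a direct summand, we construct a retraction. Pick a linear functional $\varphi$ on $\mathcal{A}^{p+1,q+1}$ with $\varphi(\partial\overline{\partial}a)=1$, and define a morphism $r:\mathcal{A}\to S$ by
\[
x\mapsto \varphi(\partial\overline{\partial}x)\,a+\varphi(\overline{\partial}x)\,\partial a\cdot(\pm)+\varphi(\partial x)\,\overline{\partial}a\cdot(\pm)+\varphi(x)\,\partial\overline{\partial}a,
\]
with signs chosen according to the anticommutation relation. One checks that $r$ commutes with $\partial$ and $\overline{\partial}$ and restricts to the identity on $S$. Hence $\mathcal{A}\cong S\oplus\ker r$, and induction applies. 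After finitely many steps we obtain $\mathcal{A}=(\text{squares})\oplus\mathcal{B}$ with $\partial\overline{\partial}=0$ on $\mathcal{B}$.

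\emph{Step 2: complexes with $\partial\overline{\partial}=0$.} Such a $\mathcal{B}$ is a representation of a quiver whose vertices are the bidegrees and whose arrows are $\partial,\overline{\partial}$, with the relations $\partial^2=\overline{\partial}^2=\partial\overline{\partial}=\overline{\partial}\partial=0$. Each element has at most one nonzero outgoing "move". The key observation is that, after regrading along antidiagonals, $\mathcal{B}$ becomes a representation of a string algebra. Along total degrees the arrows go from degree $k$ to degree $k+1$, giving an infinite zigzag-type quiver with relations that every length-two path is zero. Modules over such gentle/string algebras decompose into string modules (there are no band modules, since the underlying graph is a tree after bigrading), and string modules here are exactly the zigzags. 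To stay self-contained, we instead give a direct argument, sketched in Step 3.

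\emph{Step 3: extracting a zigzag directly.} Pick a total degree $k$ and take a maximal-length zigzag present in $\mathcal{B}$. Concretely, among bihomogeneous elements of total degree $k$, consider chains $x_1,\dots,x_m$ in degrees $(p,q),(p-1,q+1),\dots$ with $\overline{\partial}x_i=\pm\partial x_{i+1}\neq 0$, together with endpoint conditions. Choose such a chain of maximal length and, among those, one where the endpoints are as "closed" as possible. The span $Z$ of the $x_i$ and their images is a zigzag. To split it off, we again build a retraction using functionals on the image elements at the boundary, with maximality ensuring that the retraction is well defined and commutes with the differentials.

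This splitting is the main obstacle. It requires careful choices of complements, in the style of a filtration by $\ker\partial\cap\ker\overline{\partial}$ and images, so that the functionals are compatible along the whole chain. I would try to handle it by working with the filtration
\[
\operatorname{im}\partial+\operatorname{im}\overline{\partial}\subseteq\ker\partial\cap\ker\overline{\partial}
\]
and choosing compatible bases inductively in the anti-diagonal direction. If that becomes too messy, I would fall back on quoting the classification of modules over string algebras (Butler–Ringel) for the quiver of Step 2.
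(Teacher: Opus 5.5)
The paper does not prove this statement; it quotes it from Stelzig and from Khovanov--Qi. Judged on its own, your proposal has a genuine gap exactly where the content of the theorem lies.

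Step~1 is fine for finite-dimensional complexes. The signs are $r(y)=-\varphi(\overline{\partial}y)\,\partial a$ on $\mathcal{A}^{p+1,q}$ and $r(z)=\varphi(\partial z)\,\overline{\partial}a$ on $\mathcal{A}^{p,q+1}$. Step~3, however, is not an argument. ``Maximal length'' plus endpoints ``as closed as possible'' is not a well-defined selection rule, and no retraction onto the chosen zigzag is written down. You yourself leave open whether one exists that commutes with $\partial$ and $\overline{\partial}$. So the minimal case rests entirely on your fallback citation of Butler--Ringel.

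For finite-dimensional $\mathcal{A}$ that citation does work. Restricted to the finite box of nonzero bidegrees, $\mathcal{B}$ is a module over a radical-square-zero string algebra. Strings must alternate direct and inverse letters, so they move monotonically along one antidiagonal strip. Hence there are no bands, and the string modules are exactly the dots and zigzags. Two side remarks: it is this monotonicity, not a tree property of the quiver on $\mathbb{Z}^2$, that excludes bands. Also, ``each element has at most one nonzero outgoing move'' is false, since $\partial x$ and $\overline{\partial}x$ can both be nonzero.

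The more serious problem is that the statement concerns bounded complexes with arbitrary components. The case the paper needs, smooth forms on a compact complex manifold, has infinite-dimensional $\mathcal{A}^{p,q}$. There your induction on total dimension and ``finitely many steps'' break down, and Butler--Ringel says nothing. Infinite-dimensional modules over string algebras need not be sums of string modules (think of Pr\"ufer modules over the Kronecker algebra). So ``replace finite-dimensionality by choices of complements'' is not a proof.

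One way to close the gap:
\begin{itemize}
\item \emph{Squares.} In each bidegree choose a complement $C^{p,q}$ of $\ker\partial\overline{\partial}$. The subcomplex generated by $C$ is a direct sum of squares. Your retraction works verbatim with $\varphi$ replaced by $(\partial\overline{\partial}|_{C})^{-1}\circ\pi$, where $\pi$ projects each $\mathcal{A}^{s,t}$ onto $\operatorname{im}\partial\overline{\partial}$ and vanishes on $C^{s,t}\oplus\partial C^{s-1,t}\oplus\overline{\partial}C^{s,t-1}$. Its kernel is then minimal.
\item \emph{Minimal part.} For minimal $\mathcal{B}$, choose complements $T^{p,q}$ of $R^{p,q}=\operatorname{im}\partial+\operatorname{im}\overline{\partial}$. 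Since $\partial$ and $\overline{\partial}$ vanish on $R$ and map $T$ into $R$, $\mathcal{B}$ is the direct sum over $k$ of the subcomplexes $\bigoplus_{p+q=k}T^{p,q}\oplus\bigoplus_{p+q=k+1}R^{p,q}$. Each of these is a representation of a finite quiver of type $A$ with alternating orientation $\cdots\leftarrow T^{p,q}\to R^{p+1,q}\leftarrow T^{p+1,q-1}\to\cdots$.
\item \emph{Decomposition.} Every representation of such a quiver, of any dimension, is a direct sum of interval modules. This follows, for example, from Auslander's theorem on representation-finite algebras, or by a direct induction along the line. Interval modules translate back into dots and zigzags.
\item \emph{Uniqueness.} Uniqueness of the multiplicities, implicit in the notation $\text{mult}_S(\mathcal{A})$, follows from Azumaya's theorem, since every square and zigzag has endomorphism ring $\mathbb{C}$.
\end{itemize}
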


\begin{Definition}
A double complex is said to be {\sl minimal} if $\partial \overline{\partial} = 0$. Equivalently, such a double complex has no square in its decomposition.
\end{Definition}

\begin{Remark}
For a double complex that is not necessarily bounded, a similar statement is true, but the direct sum might contain zigzags and squares with components in bidegree $(p, q)$ for arbitrarily large $|p| + |q|$. It may also be the case that the decomposition contains an {\sl infinite zigzag}:
\[
\begin{tikzcd}[row sep=small, column sep = small]
\vdots & & \\
\mathbb{C}\arrow[r] \arrow[u]& \mathbb{C} & \\
& \mathbb{C} \arrow[u] \arrow[r] & \mathbb{C} \\
& & \vdots \arrow[u]
\end{tikzcd}.
\]
\end{Remark}

In the cases in which we are interested, which come from complex geometry, the double complexes are always bounded and in their decomposition only a finite number of irreducible parts are zigzags. We denote the category of bounded double complexes with finite number of zigzags by $\text{DC}^{\text{bounded}, \text{fin}}$. We may consider the free abelian group generated by the isomorphism classes of $\text{DC}^{\text{bounded}, \text{fin}}$, where the sum $[A] + [B]$ is identified with $[A \oplus B]$. The obtained free $\mathbb{Z}$-module will be denoted $\mathcal{U}^{\text{bounded}, \text{fin}}$. Furthermore, let $\mathcal{I}$ be the subgroup of $\mathcal{U}^{\text{bounded}, \text{fin}}$ generated by squares, and denote $\overline{\mathcal{U}^{\text{bounded}, \text{fin}}} = \mathcal{U}^{\text{bounded}, \text{fin}} / \mathcal{I}$. Any element in $\overline{\mathcal{U}^{\text{bounded}, \text{fin}}}$  can be represented by a formal difference of minimal double complexes.

We define now a privileged class of double complex, whose zigzags and squares satisfy some additional properties.
\begin{Definition}
\label{dc-acc}
We define $\text{DC}^{\text{form}}_n$ to be the category of double complexes $\mathcal{A}^{\bullet, \bullet}$ such that:
\begin{itemize}
\item $\mathcal{A}^{p, q} = 0$ if $p < 0$ or $p > n$ or $q < 0$ or $q > n$.
\item There are only dots in the corners $(0, 0)$, $(n, 0)$, $(0, n)$, $(n, n)$.
\item There is a finite number of zigzags.
\item The zigzags are symmetric with respect to the diagonal (going from $(n, 0)$ to $(0, n)$), and also with respect to the anti-diagonal (going from $(0, 0)$ to $(n, n)$).
\end{itemize}
\end{Definition}
The reason we define $\text{DC}^{\text{form}}_n$ in such a way is that the double complex of any compact complex manifold lies in $\text{DC}^{\text{form}}_n$. It is a conjecture that, for $n \geqslant 3$, every double complex in $\text{DC}^{\text{form}}_n$ can be obtained as a formal difference of double complexes coming from compact complex manifolds.

Notice that the direct sum of two double complexes in $\text{DC}^{\text{form}}_n$ is again in $\text{DC}^{\text{form}}_n$. We can therefore consider the free abelian group generated by the isomorphism classes of $\text{DC}^{\text{form}}_n$, and identify the sum $[A] + [B]$ of two classes with the class $[A \oplus B]$. The obtained free $\mathbb{Z}$-module shall be denoted by $\mathcal{U}^{\text{form}}_n$. If $\mathcal{I}$ is the subgroup of $\mathcal{U}^{\text{form}}_n$ generated by squares, then we denote the quotient by $\overline{\mathcal{U}^{\text{form}}_n} = \mathcal{U}^{\text{form}}_n / \mathcal{I}$. This module is generated by the irreducible double complexes satisfying the conditions of Definition~\ref{dc-acc}.

\begin{Property}
\label{comptage}
For all $n \geqslant 2$, the rank of $\overline{\mathcal{U}^{\text{form}}_n}$ is equal to
\[
\text{rank} \ \overline{\mathcal{U}^{\text{form}}_n}
=
\frac{1}{4}
\Big(\frac{4n^3 - 4n}{3}
+5n^2
- 5
+ \delta_{n \in 2\mathbb{Z}}
\Big).
\]
\end{Property}
\begin{proof}
Let $\mathcal{V}$ be the free abelian group generated by the zigzags living in the square $(0, 0) -- (n, 0) -- (n, n) -- (0, n)$, and not touching the corners. The group $\mathbb{Z}/2\mathbb{Z} \times \mathbb{Z}/2\mathbb{Z}$ acts on $\mathcal{V}$ in the following way:
\begin{itemize}
\item $(1, 0)$ is the symmetry with respect to the diagonal.
\item $(0, 1)$ is the symmetry with respect to the anti-diagonal.
\item $(1, 1)$ is the rotation of angle $\pi$ around the center of the square.
\end{itemize}
Now, if $Z$ is some zigzag in $\mathcal{V}$, then $\sum_{g \in \mathbb{Z}/2\mathbb{Z} \times \mathbb{Z}/2\mathbb{Z}} g \cdot Z$ is an element in $\overline{\mathcal{U}^{\text{form}}_n}$. In fact, the family
\[
\Big\{
\sum_{g \in \mathbb{Z}/2\mathbb{Z} \times \mathbb{Z}/2\mathbb{Z}} g \cdot Z
\Big\}_{Z \in \mathcal{V}}
\]
is a basis of $\mathcal{U}^{\text{form}}_n$, and is equinumerous to the number of orbits of the action of $\mathbb{Z}/2\mathbb{Z} \times \mathbb{Z}/2\mathbb{Z}$ on $\mathcal{V}$.

We shall now, for every element $g$ in the group $ \mathbb{Z}/2\mathbb{Z} \times \mathbb{Z}/2\mathbb{Z}$, describe the fixed elements $\text{Fix}(g)$ of $g$ in $\mathcal{V}$. We start with the fixed elements of $(0, 0)$, which is the whole set of zigzags in $\mathcal{V}$. There are $(n+1)^2-4$ dots that can be placed in the square $(0, 0) -- (n, 0) -- (n, n) -- (0, n)$ if we remove the corners. For the zigzags of length at least $2$, consider the following zigzags in the square without the corners:
\[
\begin{tikzpicture}[inner sep=0pt, outer sep=0pt, remember picture]
\draw (0, 0) -- (3.75, 0) -- (3.75, 3.75) -- (0, 3.75) -- (0, 0);
\draw (0, .75) -- (3.75, .75);
\draw (0, 1.5) -- (3.75, 1.5);
\draw (0, 2.25) -- (3.75, 2.25);
\draw (0, 3) -- (3.75, 3);
\draw (0, 3.75) -- (3.75, 3.75);
\draw (.75, 0) -- (.75, 3.75);
\draw (1.5, 0) -- (1.5, 3.75);
\draw (2.25, 0) -- (2.25, 3.75);
\draw (3, 0) -- (3, 3.75);
\draw (3.75, 0) -- (3.75, 3.75);
\draw (.25, 1.85) -- (.25, 1.05) -- (1.05, 1.05) -- (1.05, .25) -- (1.85, .25);
\draw (.35, 2.4) -- (.35, 1.6) -- (1.1, 1.6) -- (1.1, 1.1) -- (1.6, 1.1) -- (1.6, .35) -- (2.4, .35);
%%%%%%%%%%%%%%%%%%%%%%%%%%%%%%%%%%%%%%%%%%%%%%%%%%%
\draw (.4, 2.6) -- (1.1, 2.6) -- (1.1, 1.8) -- (1.8, 1.8) -- (1.8, 1.1) -- (2.6, 1.1) -- (2.6, .4);
\draw (3.35, 1.15) -- (2.65, 1.15) -- (2.65, 1.95) -- (1.95, 1.95) -- (1.95, 2.65) -- (1.15, 2.65) -- (1.15, 3.35);
\draw (3.4, 1.35) -- (3.4, 2.15) -- (2.65, 2.15) -- (2.65, 2.65) -- (2.15, 2.65) -- (2.15, 3.4) -- (1.35, 3.4);
\draw (3.5, 1.9) -- (3.5, 2.7) -- (2.7, 2.7) -- (2.7, 3.5) -- (1.9, 3.5);
\end{tikzpicture}
\]
Choosing a zigzag of length $\geq 2$ in $\mathcal{V}$ is equivalent to choosing two points in one of the zigzags in the above picture. Thus, there are
\[
\aligned
\text{Fix} (0, 0)
&=
(n+1)^2-4 
+
\sum_{k = 1}^{n-3}2\binom{2k + 3}{2}
+
4\binom{2n-1}{2} \\
&=
(n+1)^2-4
+
\sum_{k=1}^{n-3}
(2k+3)(2k+2)
+
2(2n-1)(2n-2) \\
&=
\frac{4n^3}{3} + 4n^2 - \frac{13n}{3} - 7
\endaligned
\]
elements in $\mathcal{V}$.

We shall now compute the fixed elements of the symmetry $(0, 1)$. A zigzag is symmetric with respect to the anti-diagonal if and only if it is obtained by choosing two dots on a zigzag in the previous picture, with both dots being symmetric with respect to the anti-diagonal (they may eventually coincide, in which case the obtained zigzag is a dot). Choosing two dots which are symmetric is equivalent to choosing one dot on one side, and we have thus
\[
\aligned
\text{Fix} (0, 1)
&=
n-1
+
2\sum_{k=2}^{n-2} k
+
4(n-1) \\
&=
n^2+2n-5.
\endaligned
\]
For the symmetry along the diagonal, only the dots on the diagonal will be fixed:
\[
\text{Fix} (1, 0)
=
n-1.
\]
Finally, the rotation $(1, 1)$ has no fixed points in odd dimension, and in even dimension it has exactly one fixed point: the dot lying in the middle square.
\[
\text{Fix} (1, 1)
=
\delta_{n \in 2\mathbb{Z}}.
\]
Combining the previous results, and adding $2$ for the dots in the corners, we get:
\[
\text{rank} \ \overline{\mathcal{U}^{\text{form}}_n}
=
\frac{1}{4}
\Big(\frac{4n^3 - 4n}{3}
+5n^2
- 13
+ \delta_{n \in 2\mathbb{Z}}
\Big)
+
2.
\]
\end{proof}
The rank of $\overline{\mathcal{U}^{\text{form}}_1}$ is equal to $2$. The ranks of $\overline{\mathcal{U}^{\text{form}}_n}$ for $n= 2, 3, \dots, 8$ are
\[
6, 18, 39, 70, 114, 172, 247, \dots
\]

%%%%%%%%%%%%%%%%%%%%%%%%%%%%%%%%%
\subsection{The cohomological toolbox}
%%%%%%%%%%%%%%%%%%%%%%%%%%%%%%%%%

Finding the decomposition into zigzags of a double complex demands tedious computations. 
Therefore, we seek to obtain information about the multiplicities of the zigzags by using cohomologies. 
There are different cohomologies that one can consider. If $\mathcal{A}$ is a double complex, then its {\sl total complex} $\text{Tot} \mathcal{A}$ is defined as

\[
\text{Tot} \mathcal{A}^n
=
\bigoplus_{i + j = n}
\mathcal{A}^{i, j},
\quad \quad
\forall n \in \mathbb{Z},
\]
with the differential $d = \partial + \overline{\partial}$. We can check that $d^2 = 0$, so 
$(\text{Tot}\mathcal{A}, d)$
is a complex.

\begin{Definition}
The cohomology of the complex $(\text{Tot}\mathcal{A}, d)$ is called the {\sl de Rham cohomology} and will be denoted $H_{dR}^{\bullet}(\mathcal{A})$ :
\[
H_{dR}^{\bullet}(\mathcal{A})
=
\frac{\ker(d)}{\text{Im}(d)}.
\]
Furthermore, the dimension of $H_{dR}^n(\mathcal{A})$ is called the $n$-th Betti number and will be denoted $b_n(\mathcal{A})$.
\end{Definition}

The Betti numbers count the number of odd zigzags of some total degree.

\begin{Lemma}
Let $\mathcal{A}$ be a double complex. Then, for every integer $n$, the Betti number $b_n(\mathcal{A})$ is equal to the number of odd zigzags of total degree $n$ in the decomposition of $\mathcal{A}$.
\end{Lemma}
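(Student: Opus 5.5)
The argument rests on the decomposition theorem of \cite{Ste} and \cite{KQ} together with additivity. By that theorem, $\mathcal{A}$ is isomorphic to a direct sum of squares and zigzags. The total complex functor commutes with direct sums, and so does cohomology. Hence
\[
H_{dR}^n(\mathcal{A}) \cong \bigoplus_S H_{dR}^n(S)^{\oplus \text{mult}_S(\mathcal{A})},
\]
and it suffices to compute $H^n_{dR}$ for each irreducible shape. We must check that this sum is finite, or interpret it as a possibly infinite cardinal count; either way additivity holds, because direct sums of complexes of vector spaces are exact.

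The shapes are handled case by case, computing the total complex of each one explicitly.

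\textbf{Squares.} The total complex is $\mathbb{C} \to \mathbb{C}^2 \to \mathbb{C}$, with maps $x \mapsto (x,x)$ and $(y,z)\mapsto y - z$ (up to the sign conventions of the square). This is exact, so the cohomology vanishes.

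\textbf{Even zigzags.} Consider a zigzag of length $2l$. Its components sit in total degrees $k$ and $k+1$, with $l$ components in each. Each arrow goes from degree $k$ to degree $k+1$, and the resulting $l\times l$ matrix of $d$ is bidiagonal with nonzero entries. Ordering the components along the zigzag, the matrix is triangular with nonzero diagonal, hence invertible. So $d$ is an isomorphism and the cohomology vanishes.

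\textbf{Odd zigzags.} A zigzag of length $2l+1$ has $l+1$ components in its total degree $m$ and $l$ components in the other degree. Suppose first that it is looking down. Then $d$ maps $\mathbb{C}^{l+1}$ in degree $m$ to $\mathbb{C}^l$ in degree $m+1$. Deleting the first column gives a triangular matrix with nonzero diagonal, so $d$ is surjective. Hence $H^{m+1}=0$ and $\dim H^m = 1$. The looking-up case is symmetric: $d\colon \mathbb{C}^l \to \mathbb{C}^{l+1}$ is injective, which gives $\dim H^m = 1$ and $H^{m-1}=0$. A dot trivially contributes $\mathbb{C}$ in its own total degree.

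Summing these contributions, $b_n(\mathcal{A})$ equals the number of odd zigzags of total degree $n$, counted with multiplicity.

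The only genuinely delicate step is the rank computation for zigzags, i.e.\ verifying that the total differential of a zigzag has maximal rank. This needs all maps in a zigzag to be nonzero, which is part of the definition (otherwise it would split further). It also needs the observation that each component of degree $k$ is joined to at most two neighbours, and consecutively so, which makes the matrix bidiagonal. Everything else is formal bookkeeping from the decomposition theorem.
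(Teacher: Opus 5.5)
Your proof is correct and is essentially the argument the paper takes for granted. The paper states this lemma without proof, citing \cite{Ste}, and elsewhere uses exactly your reduction: cohomology commutes with direct sums and vanishes on squares, which leaves a rank count for $d$ on each zigzag. The one thing to make explicit is that the decomposition theorem you invoke is stated for bounded double complexes, so the lemma should be read under that hypothesis. For unbounded complexes infinite zigzags can carry cohomology: the staircase $x_i \in \mathcal{A}^{i,-i}$, $y_i \in \mathcal{A}^{i,1-i}$ ($i \geq 0$) with $\overline{\partial} x_i = y_i$, $\partial x_i = y_{i+1}$ is indecomposable and contains no odd zigzags, yet $b_1 = 1$.
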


\begin{Example}
\label{exdc}
Consider the following double complex, which is already decomposed as a sum of zigzags:
\[
\begin{tikzpicture}[baseline=5.8ex, inner sep=0pt, outer sep=0pt, remember picture]
\draw (0, 0) -- (2, 0) -- (2, 2) -- (0, 2) -- (0, 0);
\draw (.5, 0) -- (.5, 2);
\draw (1, 0) -- (1, 2);
\draw (1.5, 0) -- (1.5, 2);
\draw (0, .5) -- (2, .5);
\draw (0, 1) -- (2, 1);
\draw (0, 1.5) -- (2, 1.5);
\draw (.25, 1.75) -- (.75, 1.75) -- (.75, 1.25) -- (1.25, 1.25) -- (1.25, .75);
\draw (1.15, .65) -- (1.15, .25) -- (1.6, .25);
\fill [black] (1.75, .75) circle [radius=.05];
\end{tikzpicture} \ .
\]
It has one zigzag of length $5$ and one of length $3$, both having total degree $3$ (the bottom left box has bidegree $(0, 0)$). There is also a dot of total degree $4$. Thus the nonzero Betti numbers are $b_3 = 2$ and $b_4 = 1$.
\end{Example}

\begin{Definition}
The cohomology associated to the vertical operator $\overline{\partial}$ is called the {\sl Dolbeault cohomology} and will be denoted $H_{\overline{\partial}}^{\bullet, \bullet}$ :
\[
H_{\overline{\partial}}^{\bullet, \bullet} 
=
\frac{\ker \overline{\partial}}{\text{Im} \overline{\partial}}.
\]
The dimensions of these vector spaces are called the {\sl Hodge numbers} and will be denoted by
\[
h_{\overline{\partial}}^{\bullet, \bullet}
=
\dim H_{\overline{\partial}}^{\bullet, \bullet} .
\]
\end{Definition}

As for the de Rham cohomology, the Dolbeault cohomology vanishes for squares. For a zigzag, the only parts that do not vanish in Dolbeault cohomology are the ones whose vertical arrows are zeros.

\[
\begin{tikzcd}
\mathbb{C} \arrow[r] & \mathbb{C}\\
&\mathbb{C} \arrow[r] \arrow[u] & \mathbb{\ddots} \\
 & &\mathbb{C} \arrow[r] \arrow[u] & \mathbb{C} 
\end{tikzcd}
\substack{H_{\overline{\partial}} \\ \Longrightarrow}
\begin{tikzcd}
\mathbb{C} \arrow[r] & 0\\
& 0 \arrow[r] \arrow[u] & \mathbb{\ddots} \\
 & & 0 \arrow[r] \arrow[u] & \mathbb{C} 
\end{tikzcd}
\]

In Example~\ref{exdc}, the nonzero Hodge numbers are
$h^{0, 3}_{\overline{\partial}}
=
h^{3, 0}_{\overline{\partial}}
=
h^{3, 1}_{\overline{\partial}}
=
1$.

Furthermore, we can define, for any double complex, the {\sl Bott-Chern cohomology groups}:
\[
H_{\text{BC}}^{\bullet, \bullet}
=
\frac{\ker \partial \cap \ker \overline{\partial}}{\text{Im}\partial \overline{\partial}}.
\]
The Bott-Chern cohomology vanishes on squares. For a zigzag, it identifies the degrees which have no outgoing arrows. For different types of zigzags, the red dots in the following picture show the degrees that contribute in the Bott-Chern cohomology:

\[
\begin{tikzpicture}
\draw (1, -1) -- (1, 0) -- (0, 0) -- (0, 1);
\fill [red] (1,0) circle [radius=.1];
\fill [black] (1,-1) circle [radius=.1];
\fill [black] (0,0) circle [radius=.1];
\fill [red] (0,1) circle [radius=.1];
\end{tikzpicture}
\quad \quad
\begin{tikzpicture}
\draw (1, -.5) -- (0, -.5) -- (0, .5) -- (-1, .5);
\fill [red] (1,-.5) circle [radius=.1];
\fill [black] (0,-.5) circle [radius=.1];
\fill [black] (-1,.5) circle [radius=.1];
\fill [red] (0,.5) circle [radius=.1];
\end{tikzpicture}
\quad \quad
\begin{tikzpicture}
\draw (1, -.5) -- (0, -.5) -- (0, .5) -- (-1, .5) -- (-1, 1.5);
\fill [red] (1,-.5) circle [radius=.1];
\fill [black] (0,-.5) circle [radius=.1];
\fill [black] (-1,.5) circle [radius=.1];
\fill [red] (0,.5) circle [radius=.1];
\fill [red] (-1,1.5) circle [radius=.1];
\end{tikzpicture}
\]

In Example~\ref{exdc}, the nonzero Bott-Chern numbers are
$h^{1, 3}_{\text{BC}}
=
h^{2, 2}_{\text{BC}}
=
h^{2, 1}_{\text{BC}}
=
h^{3, 1}_{\text{BC}}
=
h^{3, 0}_{\text{BC}}
=
1$.

In a similar way we introduce the {\sl Aeppli cohomology groups}:
\[
H_{\text{A}}^{\bullet, \bullet}
=
\frac{\ker \partial \overline{\partial}}{\text{Im} \partial + \text{Im} \overline{\partial}}.
\]
The Aeppli cohomology vanishes on squares. For a zigzags, it identifies the degrees which have only outgoing arrows. For different types of zigzags, the red dots in the following picture show the degrees that contribute in the Aeppli cohomology:
\[
\begin{tikzpicture}
\draw (1, -1) -- (1, 0) -- (0, 0) -- (0, 1);
\fill [black] (1,0) circle [radius=.1];
\fill [red] (1,-1) circle [radius=.1];
\fill [red] (0,0) circle [radius=.1];
\fill [black] (0,1) circle [radius=.1];
\end{tikzpicture}
\quad \quad
\begin{tikzpicture}
\draw (1, -.5) -- (0, -.5) -- (0, .5) -- (-1, .5);
\fill [black] (1,-.5) circle [radius=.1];
\fill [red] (0,-.5) circle [radius=.1];
\fill [red] (-1,.5) circle [radius=.1];
\fill [black] (0,.5) circle [radius=.1];
\end{tikzpicture}
\quad \quad
\begin{tikzpicture}
\draw (1, -.5) -- (0, -.5) -- (0, .5) -- (-1, .5) -- (-1, 1.5);
\fill [black] (1,-.5) circle [radius=.1];
\fill [red] (0,-.5) circle [radius=.1];
\fill [red] (-1,.5) circle [radius=.1];
\fill [black] (0,.5) circle [radius=.1];
\fill [black] (-1,1.5) circle [radius=.1];
\end{tikzpicture}
\]

In Example~\ref{exdc}, the nonzero Aeppli numbers are
$h^{0, 3}_{\text{A}}
=
h^{1, 2}_{\text{A}}
=
h^{2, 1}_{\text{A}}
=
h^{2, 0}_{\text{A}}
=
h^{3, 0}_{\text{A}}
=
1$.

\begin{Remark}
Let $Z$ be a zigzag, and $\star Z$ its symmetry wih respect to the diagonal going from $(0, n)$ to $(n, 0)$. Then the components of $Z$ having ingoing arrows correspond to the components of $\star Z$ having outgoing arrows. Hence, the information of the Bott-Chern cohomology of $Z$ is equivalent to the Aeppli cohomology of $\star Z$. 

In particular, for double complexes in $\mathcal{U}_n^{\text{form}}$, which are symmetric along the diagonal, knowing the Bott-Chern cohomology is equivalent to knowing the Aeppli cohomology.
\end{Remark}

%%%%%%%%%%%%%%%%%%%%%%%%%%%%%%%%%%%%%%%%%%%
\subsection{The Frölicher spectral sequence}
%%%%%%%%%%%%%%%%%%%%%%%%%%%%%%%%%%%%%%%%%%%
Let $(K^{\bullet, \bullet}, \partial, \overline{\partial})$ be a double complex. We will describe the {\sl Frölicher spectral sequence} of $K$.

The zero page of the spectral sequence is simply the double complex $K$ with the differential $\overline{\partial}$:
\[
(E_0^{\bullet, \bullet}, d_0)
=
(K^{\bullet, \bullet}, \overline{\partial}).
\]
Thus, the first page is given by the Dolbeault cohomologies, and the differential is the map induced by $\partial$ in the quotient:
\[
(E_1^{\bullet, \bullet}, d_1)
=
(H^{\bullet, \bullet}_{\overline{\partial}}(K), [\partial]).
\]
We shall describe the Frölicher spectral sequence for the different irreducible double complexes. First of all, any square has vanishing Dolbeault cohomology and thus the Frölicher spectral sequence of a square vanishes (and degenerates) at the first page.

Now, for an odd zigzag, the Dolbeault cohomology vanishes in every but one bidegree. Therefore the first page of the spectral sequence of an odd zigzag has only one nonzero component. All the maps are trivial, and thus the Frölicher spectral sequence degenerates at first page.

For an even vertical zigzag, the Dolbeault cohomology vanishes everywhere and so does the first page $E_1$. For an even horizontal zigzag $Z^{a, b}_{1, l}$, the two extremities of bidegree $(a, b)$ and $(a+l, b-l+1)$ will have non vanishing Dolbeault cohomology. Now the arrows in the pages $E_1, E_2, \dots, E_{l-1}$ are all zero, so we have
\[
E_1 \simeq E_2 \simeq \cdots \simeq E_{l}.
\]
In $E_l$ however, the arrow starting from bidegree $(a, b)$ goes to bidegree $(a+l, b-l+1)$. Since the Frölicher spectral sequence converges to the De Rham cohomology, which vanishes for even zigzags, this arrow must be an isomorphism. 
Therefore, the spectral sequence vanishes (and degenerates) at page $l+1$.
\[
\aligned
E_0 :
\begin{tikzcd}[row sep=.8em, column sep=.8em, baseline=-1em]
\vdots & \vdots & \vdots & \vdots \\
\mathbb{C} \arrow[u] & \mathbb{C} \arrow[u] & 0 \arrow[u] & 0 \arrow[u]\\
0 \arrow[u] &\mathbb{C} \arrow[u] & \mathbb{C} \arrow[u] & 0 \arrow[u] \\
0 \arrow[u]  & 0 \arrow[u] &\mathbb{C} \arrow[u] & \mathbb{C} \arrow[u] 
\end{tikzcd}
\quad \quad
E_1:
\begin{tikzcd}[row sep=.8em, column sep=.8em]
\mathbb{C} \arrow[r] & 0 \arrow[r] & 0 \arrow[r] & 0 \arrow[r] & \cdots \\
0 \arrow[r] &  0\arrow[r] & 0 \arrow[r] & 0 \arrow[r] & \cdots \\
0 \arrow[r]  & 0 \arrow[r] & 0 \arrow[r] & \mathbb{C} \arrow[r] & \cdots
\end{tikzcd}
\\ \\
E_2:
\begin{tikzcd}[row sep=.8em, column sep=.8em]
\mathbb{C} \arrow[rrd] & 0 \arrow[rrd] & 0 \arrow[rrd] & 0 & \cdots \\
0 \arrow[rrd] &  0\arrow[rrd] & 0 \arrow[rrd] & 0 & \cdots \\
0 & 0 & 0 & \mathbb{C} & \cdots
\end{tikzcd}
\quad \quad
E_3:
\begin{tikzcd}[row sep=.8em, column sep=.8em]
\mathbb{C} \arrow[rrrdd] & 0 \arrow[rrrdd] & 0 & 0 & \cdots \\
0 &  0 & 0 & 0 & \cdots \\
0 & 0 & 0 & \mathbb{C} & \cdots
\end{tikzcd}
\endaligned
\]
The Frölicher spectral sequence therefore establishes a relation between the Betti numbers (which are topological invariants) and the Hodge numbers. In terms of zigzags, the Frölicher spectral sequence gives the multiplicities of the even horizontal zigzags. 
\begin{Fact}
The data of the Frölicher spectral sequence is enough to determine the multiplicities of even horizontal zigzags in the decomposition of a double complex.
\end{Fact}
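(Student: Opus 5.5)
The proof should go through the decomposition theorem of \cite{Ste} and \cite{KQ}, reducing to a computation on irreducible pieces. The Frölicher spectral sequence is additive under direct sums: each page $E_r$ and each differential $d_r$ of $\mathcal{A}\oplus\mathcal{B}$ is the direct sum of those of $\mathcal{A}$ and $\mathcal{B}$, since the construction is functorial and all operations involved (kernels, images, quotients) commute with direct sums. Hence, by the computation just above, $\dim E_r^{p,q}(\mathcal{A})$ is the sum, over all irreducible summands $S$, of $\text{mult}_S(\mathcal{A})\cdot\dim E_r^{p,q}(S)$. For a finite-dimensional total space these multiplicities are finite; in the bounded case with finitely many zigzags the relevant counts are still finite, since squares contribute nothing from $E_1$ on.

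Next I would read off the contribution of each summand type from the analysis preceding the statement. Squares and even vertical zigzags have $E_1=0$. An odd zigzag contributes a single $\mathbb{C}$ that survives to $E_\infty$ in all pages $r\geq1$. An even horizontal zigzag $Z^{a,b}_{1,l}$ contributes a $\mathbb{C}$ in bidegrees $(a,b)$ and $(a+l,b-l+1)$ on pages $E_1,\dots,E_l$, and nothing on $E_{l+1}$. The key step is then the difference formula
\[
\dim E_l^{a,b}(\mathcal{A})-\dim E_{l+1}^{a,b}(\mathcal{A}),
\]
which counts exactly the summands dying between pages $l$ and $l+1$ with a component at $(a,b)$. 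These are the horizontal zigzags $Z^{a,b}_{1,l}$, which die through their source end, and the $Z^{a-l,b+l-1}_{1,l}$, which die through their target end.

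To separate the two contributions, I would use the rank of the differential $d_l:E_l^{a,b}\to E_l^{a+l,b-l+1}$ instead of dimensions. Because the spectral sequence decomposes as a direct sum, $\operatorname{rank}(d_l)$ on this component is the sum of the ranks on the summands. Only $Z^{a,b}_{1,l}$ has a nonzero $d_l$ starting at $(a,b)$, and there it is an isomorphism $\mathbb{C}\to\mathbb{C}$, as shown in the discussion above. So $\text{mult}_{Z^{a,b}_{1,l}}(\mathcal{A})=\operatorname{rank}\big(d_l\colon E_l^{a,b}\to E_l^{a+l,b-l+1}\big)$, which is part of the data of the spectral sequence. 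If only the dimensions of the pages are allowed as data, the same number equals $\dim E_l^{a,b}-\dim E_{l+1}^{a,b}$ minus the image contribution at $(a,b)$. One can then solve recursively, or observe that $\dim E_l^{a,b}-\dim E_{l+1}^{a,b}=\operatorname{rank} d_l^{\text{out}}+\operatorname{rank} d_l^{\text{in}}$ and use the ranks directly.

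The main obstacle is justifying the additivity of the spectral sequence, including its differentials, under the decomposition, and confirming that $d_l$ on $Z^{a,b}_{1,l}$ really is nonzero. The paper argues the latter through convergence to de Rham cohomology, which vanishes on even zigzags. I would check this directly instead: a class at $(a,b)$ is lifted along the zigzag, and the resulting $\partial$ lands at the far end with the nonzero composite of the arrows. This gives the isomorphism explicitly and avoids appealing to convergence for unbounded summands.
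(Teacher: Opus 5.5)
Your proposal is correct and follows the paper's route: decompose into squares and zigzags, use that the Frölicher spectral sequence splits over direct sums, and read off each irreducible summand's contribution, where $Z^{a,b}_{1,l}$ is the only summand with a nonzero $d_l$ leaving $(a,b)$. Your formula $\text{mult}_{Z^{a,b}_{1,l}}=\operatorname{rank} d_l$ (or, from page dimensions alone, solving $\dim E_l^{a,b}-\dim E_{l+1}^{a,b}=\text{mult}_{Z^{a,b}_{1,l}}+\text{mult}_{Z^{a-l,b+l-1}_{1,l}}$ recursively along the $d_l$-chain, which terminates by boundedness) and your direct lifting check that $d_l\neq 0$ only spell out what the paper leaves implicit; note that the paper's convergence argument is already valid, since each even zigzag is itself bounded.
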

The Frölicher spectral sequence is the spectral sequence associated to the following filtration of the total complex:
\[
F_l \text{Tot}^k K
=
\bigoplus_{
\substack{
p+q=k \\
l \leqslant p}
}
K^{p, q}.
\]
One can consider a conjugate filtration:
\[
\tilde{F_l} \text{Tot}^k K
=
\bigoplus_{
\substack{
p+q=k \\
l \leqslant q}
}
K^{p, q},
\]
which gives rise to the {\sl conjugate Frölicher spectral sequence} $\tilde{E}^{\bullet, \bullet}_{\bullet}$. This spectral sequence determines the multiplicities of the even vertical zigzags.

We will simply write the Frölicher spectral sequences to allude to the Frölicher spectral sequence and its conjugate.
\begin{Definition}
We say that a map $f: \mathcal{A} \longrightarrow \mathcal{B}$ is a $E_r$-isomorphism if it induces an isomorphism on the $r$-th page of the Frölicher spectral sequences. 
\end{Definition}
By definition, a $E_1$-isomorphism is a quasi-isomorphism in Dolbeault and conjugate Dolbeault cohomologies. For bounded double complexes, such a map induces an isomorphism in all cohomological invariants (\cite{Ste}). Therefore, a $E_1$-isomorphism will also be called a {\sl quasi-isomorphism}. Two double complexes are quasi-isomorphic if and only if they have the same multiplicities for all zigzags in their decompositions.
%%%%%%%%%%%%%%%%%%%%%%%%%%%%%%%%%%%%%%%%%%%
\section{Computing the multiplicities of zigzags}
%%%%%%%%%%%%%%%%%%%%%%%%%%%%%%%%%%%%%%%%%%%
In this section we are interested in finding the decomposition of double complexes in zigzags. We will consider the {\sl Varouchas cohomologies}, which in dimension $n$ give us $O(n^2)$ linear equations in the zigzags. From Property~\ref{comptage}, we already know that this will not suffice, and we will investigate what type of double complexes are indistinguishable by these cohomologies, together with Betti, Hodge, Bott-Chern and Aeppli numbers. These double complexes will be referred to as {\sl locally similar}, and we shall describe a way of constructing them.

We then consider the {\sl Bigolin cohomology}, which gives $O(n^3)$ linear equations in the zigzags. We describe which zigzags are counted by each Bigolin number, and ultimately prove that the multiplicities of odd zigzags can be computed by them.

%%%%%%%%%%%%%%%%%%%%%%%%%%%%%%%%%%%%%%%%%%%
\subsection{Locally similar double complexes}
%%%%%%%%%%%%%%%%%%%%%%%%%%%%%%%%%%%%%%%%%%%
\label{loc_sim}

One can imagine a double complex as a puzzle with seven distinct pieces :
\begin{equation}
\label{pieces_puzzle}
\case{0}{0} \ ,
\ \
\case{1}{0} \ ,
\ \ 
\case{2}{0} \ ,
\ \
\case{3}{0} \ ,
\ \
\case{4}{0} \ ,
\ \
\case{5}{0} \ ,
\ \
\case{6}{0}
\ .
\end{equation}
One shall imagine these pieces to be transparent, so that they can be stacked to form as many dots and zigzags in every degree as needed.

In each degree, the Bott-Chern cohomology gives us information about how the double complex looks like in this degree. More precisely, the four first pieces displayed above will contribute to the dimension of $H_{\text{BC}}$. We write this schematically as :

\[
h_{\text{BC}}
=
\case{0}{0}
\ + \
\case{1}{0}
\ + \
\case{2}{0}
\ + \
\case{3}{0}
\ .
\]

Similarly, the first piece and the last three will contribute to the dimension of the Aeppli cohomology :

\begin{equation}
\label{eq_A}
h_{\text{A}}
=
\case{0}{0}
\ + \
\case{4}{0}
\ + \
\case{5}{0}
\ + \
\case{6}{0}
\ .
\end{equation}

Notice that a similar expression can not be written for the De Rham cohomolgy, 
as the computation for this cohomology needs the information of the total complex.

\begin{Definition}
Let $(\mathcal{A}^{\bullet, \bullet}, \partial_1, \overline{\partial}_1)$ be a double complex. We define the {\sl local complex} of $\mathcal{A}$ centered in bidegree $(p, q)$ to be the complex $(\mathcal{A}_{p, q}^{\bullet, \bullet}, \partial_2, \overline{\partial}_2)$ defined by
\[
\mathcal{A}_{p, q}^{r, s}
=
\Bigg\{
\begin{array}{ccc}
\mathcal{A}^{r, s} & \text{if } |r-p| + |s - q| \leqslant 1. \\
0 & \text{otherwise}.
\end{array}
\]
The differentials $\partial_2, \overline{\partial}_2$ are defined to be equal to the differentials $\partial_1, \overline{\partial}_1$ when possible, and to be the zero map otherwise:
\[
\begin{array}{cccc}
\partial_2 = \partial_1 : A^{r, s} \mapsto A^{r+1, s} \ \text{if } r =p, p-1,
&
\overline{\partial}_2 = \overline{\partial}_1 : A^{r, s} \mapsto A^{r, s+1} \ \text{if } s =q, q-1.
\\
\partial_2 = 0 : A^{r, s} \mapsto A^{r+1, s} \ \text{if } r \neq p, p-1,
&
\overline{\partial}_2 = 0 : A^{r, s} \mapsto A^{r, s+1} \ \text{if } s \neq q, q-1.
\end{array}
\]
\end{Definition}
\begin{Definition}
Two double complexes $\mathcal{A}^{\bullet, \bullet}$ and $\mathcal{B}^{\bullet, \bullet}$ are said to be {\sl locally similar} if there are two minimal double complexes $\mathcal{A}'^{\bullet, \bullet}$ and $\mathcal{B}'^{\bullet, \bullet}$ such that $\mathcal{A}^{\bullet, \bullet}$ is quasi-isomorphic to $\mathcal{A}'^{\bullet, \bullet}$, $\mathcal{B}^{\bullet, \bullet}$ is quasi-isomorphic to $\mathcal{B}'^{\bullet, \bullet}$, and for all bidegrees $(p, q)$, we have an isomorphism
\[
\mathcal{A}'^{\bullet, \bullet}_{p, q}
\simeq
\mathcal{B}'^{\bullet, \bullet}_{p, q}.
\]
\end{Definition}

\subsection{Varouchas cohomologies}

Following \cite{JV}, we consider for any double complex the six cohomologies :

\[
\begin{array}{cc}
A^{\bullet, \bullet} = \frac{\text{Im} \partial \cap \text{Im} \overline{\partial}}{\text{Im} \partial \overline{\partial}} 
&
D^{\bullet, \bullet}= \frac{\ker \partial \cap \text{Im} \overline{\partial}}{\text{Im} \partial \overline{\partial}}
\\
B^{\bullet, \bullet} = \frac{\text{Im} \partial \cap \ker \overline{\partial}}{\text{Im} \partial \overline{\partial}}
&
E^{\bullet, \bullet} =  \frac{\ker \partial \overline{\partial}}{\ker \partial + \text{Im} \overline{\partial}}
\\
C^{\bullet, \bullet} = \frac{\ker \partial \overline{\partial}}{\ker \overline{\partial} + \text{Im} \partial}
&
F^{\bullet, \bullet} = \frac{\ker \partial \overline{\partial}}{\ker \overline{\partial} + \ker \partial}
\end{array}
\]

Each one of them gives us a linear equation involving the seven pieces displayed in \eqref{pieces_puzzle}. Together with Dolbeault, Bott-Chern and Aeppli, we have a total of $9$ equations, for only $7$ variables. We shall write down the linear system. Each row corresponds to a cohomology, and each column to a puzzle piece, in the same order as in \eqref{pieces_puzzle} :

\[
\begin{pmatrix}
1 & 1 & 1 & 1 & 0 & 0 & 0 \\
1 & 0 & 0 & 0 & 1 & 1 & 1 \\
1 & 0 & 0 & 1 & 0 & 1 & 0 \\
0 & 1 & 0 & 0 & 0 & 0 & 0 \\
0 & 1 & 0 & 1 & 0 & 0 & 0 \\
0 & 0 & 0 & 0 & 1 & 0 & 1 \\
0 & 1 & 1 & 0 & 0 & 0 & 0 \\
0 & 0 & 0 & 0 & 0 & 1 & 1 \\
0 & 0 & 0 & 0 & 0 & 0 & 1
\end{pmatrix} .
\]

A straightforward computation using the help of SAGE shows that this system can be solved, i.e the matrix is injective. We therefore have the
\begin{Property}
Two double complexes $\mathcal{A}$ and $\mathcal{B}$ are locally similar if and only if they share the same Varouchas, Dolbeault, Bott-Chern and Aeppli Cohomologies.
\end{Property}

\subsection{Construction of locally similar double complexes}
For a bounded double complex in $\text{DC}^{\text{form}}_n$, the Varouchas, as well as the Bott-Chern, Aeppli and Hodge cohomologies can each generate at most $n^2$ linear equations on the multiplicities of the zigzags of the double complex. However, by Property~\ref{comptage}, we know that the number of indeterminate zigzags has a cubic growth rate. Therefore, for $n$ sufficiently large, there will be two double complexes that will have the same previously described cohomologies. Equivalently, these double complexes will be locally similar.
 \begin{Fact}
Locally similar, but non quasi-isomorphic, double complexes exist.
\end{Fact}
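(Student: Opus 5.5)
The plan is a counting argument, made precise by linear algebra over $\mathbb{Q}$. Since all cohomologies commute with direct sums and vanish on squares, each of the relevant invariants is a $\mathbb{Z}$-linear function on $\overline{\mathcal{U}^{\text{form}}_n}$. These invariants are the Varouchas cohomologies $A,\dots,F$, together with Dolbeault, Bott-Chern and Aeppli, in every bidegree $(p,q)$ with $0\le p,q\le n$. So we get a linear map
\[
\Phi_n : \overline{\mathcal{U}^{\text{form}}_n}\otimes\mathbb{Q} \longrightarrow \mathbb{Q}^{9(n+1)^2},
\]
sending a formal sum of zigzags to the vector of all these dimensions. By the Property preceding this Fact, two double complexes are locally similar if and only if they have the same image under $\Phi_n$. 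By Property~\ref{comptage}, the source has dimension $\frac14\big(\frac{4n^3-4n}{3}+5n^2-5+\delta_{n\in2\mathbb{Z}}\big)$, which grows cubically. The target has dimension $9(n+1)^2$. For $n$ large, for instance $n\ge 30$ where $n^3/3 > 9(n+1)^2$, the map $\Phi_n$ therefore has a nonzero kernel.

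Next, take a nonzero kernel element, clear denominators to make it integral, and write it as $x = P - N$. Here $P$ and $N$ are nonnegative integer combinations of zigzags with disjoint supports; both are nonzero because $x\neq 0$ and the invariants of a nonzero nonnegative combination cannot all vanish. Realize $P$ and $N$ as genuine minimal double complexes $\mathcal{A}=\bigoplus Z^{m_Z}$ and $\mathcal{B}=\bigoplus Z^{m'_Z}$. They have equal Varouchas, Dolbeault, Bott-Chern and Aeppli numbers, hence they are locally similar. Their zigzag multiplicities differ, so by the characterization recalled at the end of the previous section they are not quasi-isomorphic. If one also wants $\mathcal{A},\mathcal{B}\in \text{DC}^{\text{form}}_n$, this holds automatically: we work with the symmetrized basis elements of $\overline{\mathcal{U}^{\text{form}}_n}$, and each such element is itself a genuine double complex in $\text{DC}^{\text{form}}_n$.

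The main obstacle is the small gap between ``linearly dependent invariants'' and ``actual complexes''. First, an element of the kernel is a formal difference, so the splitting into $P$ and $N$ is needed. Second, we must justify that the invariants of a nonzero nonnegative combination do not all vanish. This follows because every zigzag has nonzero Bott-Chern cohomology in some bidegree. Third, for a sharper statement, one can replace the crude bound by an explicit small example. I would first try $n=4$, checking a $39$-dimensional kernel computation (or a hand-built pair of zigzag sums) against the $9\cdot 25$ equations, many of which are redundant. For the bare existence claimed in the Fact, however, the dimension count is enough.
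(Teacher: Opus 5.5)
Your proposal is correct and follows the paper's argument. The paper also compares the cubic rank from Property~\ref{comptage} with the $O(n^2)$ equations coming from the Varouchas, Dolbeault, Bott-Chern and Aeppli numbers, and invokes the preceding Property to identify equal invariants with local similarity. You additionally spell out a step the paper leaves implicit: splitting an integral kernel element into positive and negative parts, and using the positivity of Bott-Chern numbers to show both parts are nonzero genuine double complexes.
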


We are now interested in the construction of locally similar double complexes that have the same Betti numbers and Frölicher spectral sequence. To do that, consider the linear map
\[
\chi
:
\overline{\mathcal{U}^{\text{bounded, fin}}}
\longrightarrow
\mathbb{Z}^{\mathbb{N}},
\]
which sends a formal double complex to the array $(b^{\bullet}, h_{\text{BC}}^{\bullet, \bullet}, h_{\text{A}}^{\bullet, \bullet}, h_{\overline{\partial}}^{\bullet, \bullet}, a^{\bullet, \bullet}, \cdots, f^{\bullet, \bullet}, \mathcal{F}_{\bullet}^{\bullet, \bullet}, \overline{\mathcal{F}_{\bullet}^{\bullet, \bullet}})$ of its Betti, Bott-Chern, Aeppli, Hodge, Varouchas numbers and to the dimensions of the Frölicher spectral sequences $\mathcal{F}_{\bullet}^{\bullet, \bullet}$ and $\overline{\mathcal{F}_{\bullet}^{\bullet, \bullet}}$.

We will give a construction of elements in the kernel of $\chi$ and show that we have described all of them. We start by choosing a degree $k$, and we consider the sets $S^+_k$ and $S^-_k$ of bidegree $(p, q)$ of total degree $p+q > k$ and $p+q < k$ respectively. These sets can be represented with squares placed adequately, for example $S^-_3$:
\[
\begin{tikzpicture}[baseline=5.8ex, inner sep=0pt, outer sep=0pt, remember picture]
\draw (0, 0) -- (1.5, 0) -- (1.5, .5) -- (0, .5) -- (0, 0);
\draw (1, 0) -- (1, 1) -- (0, 1) -- (0, 0);
\draw (.5, 0) -- (.5, 1.5) -- (0, 1.5) -- (0, 0);
\end{tikzpicture}
\]
Depending on the context, one can consider only bidegrees in the first quadrant, or also negative bidegrees.

Now, fill the square in the diagram with integers such that:
\begin{itemize}
\item On any row or column, almost all integers are $0$.
\item On any row or column, the (finite) sum of the integers is equal to $0$.
\end{itemize}
\begin{Example}
\label{fill_ex}
In the above diagram, a way to do it is the following:
\[
\begin{tikzpicture}[baseline=5.8ex, inner sep=0pt, outer sep=0pt, remember picture]
\draw (0, 0) -- (1.5, 0) -- (1.5, .5) -- (0, .5) -- (0, 0);
\draw (1, 0) -- (1, 1) -- (0, 1) -- (0, 0);
\draw (.5, 0) -- (.5, 1.5) -- (0, 1.5) -- (0, 0);
\node at (0.75, .25) (a) {1};
\node at (0.25, .75) (a) {1};
\node at (0.75, .75) (a) {-1};
\node at (0.25, .25) (a) {-1};
\end{tikzpicture}
\]
\end{Example}
This is equivalent to choosing a mapping $v: S^-_k \rightarrow \mathbb{Z}$. such that
\begin{equation}
\label{condition_v}
\begin{array}{cccc}
v((p, q)) &= 0 \ &\text{for all except a finite number of } (p, q) \in S_k^- \\
\sum_{p \leqslant k-q-1}v((p, q)) &= 0 \ &\text{for all } q \in \mathbb{Z} \\
\sum_{q \leqslant k-p-1}v((p, q)) &= 0 \ &\text{for all } p \in \mathbb{Z} 
\end{array}
\end{equation}

We define the following double complex:
\[
Z^{v-}_k
=
\bigoplus_{(p, q) \in S^-_k} v((p, q)) Z^{p, q}_k.
\]
Similarly, we can do the same thing with the set $S^+_k$:
\[
Z^{v+}_k
=
\bigoplus_{(p, q) \in S^+_k} v((p, q)) Z^{p, q}_k,
\]
where the map $v:S^+_k \longrightarrow \mathbb{Z}$ satisfy
\begin{equation}
\label{condition_v}
\begin{array}{cccc}
v((p, q)) &= 0 \ &\text{for all except a finite number of } (p, q) \in S_k^- \\
\sum_{p \geqslant k-q+1}v((p, q)) &= 0 \ &\text{for all } q \in \mathbb{Z} \\
\sum_{q \geqslant k-p+1}v((p, q)) &= 0 \ &\text{for all } p \in \mathbb{Z} 
\end{array}
\end{equation}
\begin{Property}
For all degree $k \in \mathbb{Z}$ and map $v: S_k^{\pm} \rightarrow \mathbb{Z}$, the double complex $Z^{v\pm}_k$ lies in the kernel of $\chi$.
\end{Property}
\begin{proof}
All the zigzags are odd and of total degree $k$, thus the Betti numbers $b_l$ are all zero for $l \neq k$. Furthermore, $b_k$ is equal to the number of zigzags in the double complex (counted with signs and multiplicities), which is zero by the construction of $v$.

We will now consider the case of the plus sign and show that $Z^{v+}_k$ is in the kernel of $\chi$. The case of the minus sign is covered by symmetry. Since all the zigzags in $Z^{v+}_k$ are looking up, and since we do not have any dots, there are only four puzzle pieces that need to be considered:
\[
\case{1}{0},
\case{2}{0},
\case{3}{0},
\case{6}{0}.
\] 
We need to show that each one of them appears $0$ times (when counted with multiplicities) in the double complex $Z^{v+}_k$. Once again, by symmetry, we only need to show it for the first two.

Let $(a, b)$ be a bidegree. The number of $\case{2}{0}$ in bidegree $(a, b)$ is zero if $a+b\neq k$. If $a+b=k$, then it is equal to the number of zigzags $Z^{p, q}_k$ with $p > a$ and $q = b$:
\[
\aligned
\case{2}{0}^{a, b}
&=
v((a+1, b)) + v((a+2, b)) + \cdots \\
&=
0,
\endaligned
\]
which vanishes by the definition of $v$.
Consider now the number of $\case{1}{0}$ in bidegree $(a, b)$. If $a+b\neq k$, then this number is again zero. If $a+b=k$, then it is equal to the number of zigzags $Z^{p, q}_k$ with $p > a$ and $q > b$:
\[
\aligned
\case{1}{0}^{a, b}
&=
\sum_{t > 0, s > 0}
v((a+t, b+s)) \\
&=
0.
\endaligned
\]
We deduce that the complex vanishes for the Dolbeault, Bott-Chern, Aeppli and Varouchas cohomologies. Furthermore, the Frölicher spectral sequence degenerates at the first page because there are only odd zigzags. Since the first page is simply the Dolbeault cohomology, it also vanishes.
\end{proof}
The previous construction gives us the family $\{Z_k^{v\pm}\}$ of elements in the kernel of $\chi$. We will now show that this family is in fact a generating family of the kernel.
\begin{Theorem}
The family of elements:
\[
\{Z^{v\pm}_k\},
\]
generates $\ker \chi$.
\end{Theorem}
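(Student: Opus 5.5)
Let $X$ be an element of $\ker \chi$, written as a finite $\mathbb{Z}$-combination of zigzags. We will show that $X$ is a sum of elements $Z^{v\pm}_k$, proceeding in three steps.

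\textbf{Step 1: only odd zigzags survive.} The Frölicher spectral sequence and its conjugate determine the multiplicities of the even horizontal and even vertical zigzags, by the Fact above. Since $\chi(X)=0$, all these multiplicities vanish, so $X$ is a combination of odd zigzags only.

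\textbf{Step 2: no dots, and the looking-up and looking-down parts separate.} Every odd zigzag has a well-defined total degree, and every cohomology in $\chi$ (Dolbeault, Bott-Chern, Aeppli, Varouchas, Betti) counts contributions in a fixed bidegree. Because odd zigzags of different total degrees sit on different antidiagonals, we can first split $X=\sum_k X_k$ according to total degree. For this splitting to be legitimate, each $X_k$ must lie in $\ker\chi$ on its own, and this is the delicate point. A zigzag of total degree $k$ occupies bidegrees with $p+q\in\{k-1,k,k+1\}$, so zigzags of degrees $k$ and $k\pm1$ can share bidegrees. We handle this by reading off puzzle pieces rather than raw cohomology dimensions. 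By the injectivity Property of the Varouchas system, $\chi(X)=0$ forces the signed count of each of the seven puzzle pieces in each bidegree to be zero. For odd zigzags, each piece in bidegree $(a,b)$ determines the total degree of the zigzag carrying it:
\begin{itemize}
\item a dot has total degree $a+b$;
\item the two ``corner'' pieces of a looking-up zigzag of total degree $a+b$ carry the red Bott-Chern components;
\item the pieces at $p+q=k-1$ belong to looking-up zigzags of degree $k$, or to looking-down zigzags of degree $k-1$, which are distinguished by which piece occurs.
\end{itemize}
Working through this case analysis once, we find that the piece counts vanish separately for each pair (total degree $k$, looking up or looking down). The dot piece count then gives zero dot multiplicity in every bidegree. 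Hence $X=\sum_k (X_k^+ + X_k^-)$, where $X_k^+$ (resp.\ $X_k^-$) is the combination of looking-up (resp.\ looking-down) zigzags of total degree $k$, and each of these lies in $\ker\chi$.

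\textbf{Step 3: translate into the conditions on $v$.} Fix $k$ and write $X_k^+=\sum_{(p,q)\in S_k^+} v((p,q))Z^{p,q}_k$. This indexing is bijective: a looking-up zigzag of length $\ge 3$ and degree $k$ is determined by its parameters $(a',b)$ with $a'+b>k$. Finiteness of $v$ is automatic. We now reverse the computation in the proof of the previous Property. The count of the piece $\case{2}{0}$ in bidegree $(a,b)$ with $a+b=k$ is $\sum_{p>a} v((p,b))$. Its vanishing for every $a$ with $a+b=k$ is exactly the row condition in \eqref{condition_v}, taking the smallest admissible $a$, namely $a=k-b$, so that $p\ge k-b+1$. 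The mirror piece $\case{3}{0}$, or the symmetric argument, gives the column condition. Therefore $X_k^+=Z^{v+}_k$, and similarly $X_k^-=Z^{v-}_k$, so the family $\{Z^{v\pm}_k\}$ generates $\ker\chi$.

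\textbf{Main obstacle.} The hard part is Step 2: showing that vanishing of the aggregated invariants forces vanishing degree by degree and shape by shape. The bidegree overlaps between zigzags of adjacent total degrees must be disentangled carefully with the seven-piece bookkeeping. If this fails in general, I would instead argue by induction on the largest total degree present. The zigzags of top degree $k$ contribute pieces at $p+q=k+1$ that nothing else touches, which would let us peel off $X_k^+$ first and then continue downward.
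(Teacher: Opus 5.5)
Your Steps 1 and 3 follow the paper's argument. The Frölicher spectral sequences kill the even zigzags. Injectivity of the $9\times 7$ system makes every piece count of $X$ vanish, so there are no dots. The end-piece counts in total degree $k$ are exactly the row and column sums of $v$. You do the looking-up part with the two end pieces having a single incoming arrow; the paper does the looking-down part with the two end pieces having a single outgoing arrow and calls the other case symmetric.

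What does not hold up is Step 2. Your claim that the pieces at $p+q=k-1$ are ``distinguished by which piece occurs'' is false for the two corner pieces:
\begin{itemize}
\item A corner with two incoming arrows in total degree $m$ occurs in looking-up zigzags of total degree $m$ of length $\geq 5$, and in \emph{every} looking-down zigzag of total degree $m-1$.
\item Dually, a corner with two outgoing arrows in total degree $m-1$ occurs in every looking-up zigzag of total degree $m$, and in looking-down zigzags of total degree $m-1$ of length $\geq 5$.
\end{itemize}
Write $v^+_m$ and $v^-_{m-1}$ for the coefficient functions of $X_m^+$ and $X_{m-1}^-$. Then the count of the first corner at $(a,b)$ with $a+b=m$ is
\[
\sum_{t>0,\,s>0} v^+_m((a+t,b+s)) \;+\; \sum_{c<a,\,d<b} v^-_{m-1}((c,d)).
\]
Its vanishing does not split into the vanishing of the two sums by any inspection of pieces. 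Each sum does vanish, but only because of the row and column conditions. This is the rectangle-sum identity used in the proof of the preceding Property, i.e.\ it comes from what you derive in Step 3. So ``each $X_k^\pm$ lies in $\ker\chi$'' cannot be the input of Step 3; it is a corollary of the theorem.

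The repair is to drop that claim. Step 3 only uses end pieces. Once the even zigzags are gone, an end piece with a single incoming (resp.\ outgoing) arrow in total degree $k$ can only belong to a looking-up (resp.\ looking-down) zigzag of total degree $k$. Hence the count of such a piece in $X$ itself already equals the relevant row or column sum of $v$, and it vanishes by the injectivity you invoked.

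With Step 2 cut down to ``all seven piece counts of $X$ vanish; in particular $X$ has no dots'', your proof is correct and is the paper's. The ``main obstacle'' you single out is not there, and the fallback induction is unnecessary. As stated it would not work anyway, since $X_k^+$ has no components in total degree $k+1$.
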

\begin{proof}
Let $\mathcal{C}$ be an element of $\ker \chi$. Since it has trivial Frölicher spectral sequences, it has no even zigzags, and we can write it as
\[
\mathcal{C}
=
\mathcal{C}_{\text{dots}} \oplus
\bigoplus_{k \in \mathbb{Z}}
\mathcal{C}_k^+ \oplus \mathcal{C}_k^-,
\]
where $\mathcal{C}_k^+$ consists in the odd zigzags of total degree $k$ that are looking up, $\mathcal{C}_k^-$ consists in the odd zigzags of total degree $k$ that are looking down and $\mathcal{C}_{\text{dots}}$ consists in the dots of $\mathcal{C}$. Since the dots can be recovered from the Varouchas cohomologies, we have $\mathcal{C}_{\text{dots}} = 0$. Consider now the zigzags in degree $k$ that are looking down, and write them as
\[
\mathcal{C}_k^-
=
\bigoplus_{p+q<k}v((p, q))Z_k^{p, q},
\]
where $v((p,q)) \in \mathbb{Z}$ are integers. We are done if we prove that $v((p,q))$ satisfy the conditions of~\ref{condition_v}. The fact that $v((p, q))=0$ for all but a finite number of bidegrees $(p, q)$ is a consequence of the fact that $\mathcal{C}$ is an element in $\overline{\mathcal{U}^{\text{bounded, fin}}}$. Furthermore, we have for all bidegrees $(r, s)$ such that $r+s=k$:
\[
\aligned
0
&=
\case{4}{0}^{r, s}
&=
v((r-1, s)) + v((r-2, s)) + \cdots, \\
0
&=
\case{5}{0}^{r, s}
&=
v((r, s-1)) + v((r, s-2)) + \cdots.
\endaligned
\]
We deduce that $v: S_k^- \longrightarrow \mathbb{Z}$ satisfy the conditions of~\ref{condition_v} and $\mathcal{C}_k^- = Z_k^{v-}$. The same is true for the odd zigzags that are looking up $\mathcal{C}_k^+$.
\end{proof}

\begin{Example}
Example~\ref{fill_ex} yields, after separating the negative zigzags from the positive ones, the following two double complexes:
\[
\begin{tikzpicture}[baseline=5.8ex, inner sep=0pt, outer sep=0pt, remember picture]
\draw (0, 0) -- (2, 0) -- (2, 2) -- (0, 2) -- (0, 0);
\draw (.5, 0) -- (.5, 2);
\draw (1, 0) -- (1, 2);
\draw (1.5, 0) -- (1.5, 2);
\draw (0, .5) -- (2, .5);
\draw (0, 1) -- (2, 1);
\draw (0, 1.5) -- (2, 1.5);
\draw (.25, 1.75) -- (.75, 1.75) -- (.75, 1.25) -- (1.25, 1.25) -- (1.25, .75);
\draw (.8, 1.15) -- (1.15, 1.15) -- (1.15, .65) -- (1.6, .65) -- (1.6, .25);
\end{tikzpicture}
\simeq
\begin{tikzpicture}[baseline=5.8ex, inner sep=0pt, outer sep=0pt, remember picture]
\draw (0, 0) -- (2, 0) -- (2, 2) -- (0, 2) -- (0, 0);
\draw (.5, 0) -- (.5, 2);
\draw (1, 0) -- (1, 2);
\draw (1.5, 0) -- (1.5, 2);
\draw (0, .5) -- (2, .5);
\draw (0, 1) -- (2, 1);
\draw (0, 1.5) -- (2, 1.5);
\draw (.25, 1.75) -- (.75, 1.75) -- (.75, 1.25) -- (1.25, 1.25) -- (1.25, .75) -- (1.75, .75) -- (1.75, .25);
\draw (.8, 1.1) -- (1.1, 1.1) -- (1.1, .8);
\end{tikzpicture}
\]
\end{Example}
%%%%%%%%%%%%%%%%%%%%%%%%%%%%%%

%%%%%%%%%%%%%%%%%%%%%%%%%%%%%%%%%%%%%%%%%%%%%
\subsection{The Bigolin complex}
%%%%%%%%%%%%%%%%%%%%%%%%%%%%%%%%%%%%%%%%%%%
\label{bigolin}
Let $(A^{\bullet, \bullet}, \partial, \overline{\partial})$ be a double complex. We denote by $\text{Tot}(A)^{\bullet} = \oplus_{p+q = \bullet} A^{p, q}$ its total complex, with the differential $d = \partial + \overline{\partial}$.

We fix a couple $(p, q)$. For all integer $k$, we denote
\[
\aligned
\mathcal{B}^k_{p, q}(A)
&=
\bigoplus_{
\substack{
r+s=k \\
0 \leqslant r \leqslant p \\
0 \leqslant s \leqslant q}
}
A^{r, s}
\quad
\text{if}
\ \ 
k \leqslant p+q, \\
\mathcal{B}^k_{p, q}(A)
&=
\bigoplus_{
\substack{
r+s=k+1 \\
r > p \\
s > q}
}
A^{r, s}
\quad
\text{if}
\ \ 
k > p+q. \\
\endaligned
\]
We shall also denote by $\pi^{p, q}$ the projection map onto the sum 
$\bigoplus_
{\substack{
0 \leqslant r < p\\
0 \leqslant s < q}}
A^{r, s}$,
sending a differential form on the sum of its $(r, s)$-components, with $0 \leqslant r < p$ and $0 \leqslant s < q$. We are now able to define the {\sl Bigolin complex}:
\[
\mathcal{B}_{p, q}^0
\xrightarrow{\pi^{p, q} \circ d}
\mathcal{B}_{p, q}^1
\xrightarrow{\pi^{p, q} \circ d}
\cdots
\mathcal{B}_{p, q}^{p+q}
\xrightarrow{\partial \overline{\partial}}
\mathcal{B}_{p, q}^{p+q+1}
\xrightarrow{d}
\mathcal{B}_{p, q}^{p+q+2}
\xrightarrow{d}
\cdots.
\]
The associated cohomology is called the {\sl Bigolin cohomology} and will be denoted by $H^k_{p, q}$. The Bigolin cohomology is oblivious to squares. For compact complex manifolds, the number of zigzags is finite by the Hodge Theory, so we deduce that the Bigolin cohomologies are finite dimensional.
We define the {\sl Bigolin numbers} $h^k_{p, q}$ to be the dimensions of the corresponding cohomology groups. The computation of the Bigolin complex can be done by restricting the original double complex to the bidegree in which we are interested. We will use the following formal construction:

\begin{Definition}
Let $\mathcal{A}^{\bullet, \bullet}$ be a double complex. We define the {\sl cutting of } $\mathcal{A}^{\bullet, \bullet}$ {\sl from the above at height} $p$ to be the double complex $\mathcal{A}'$ defined by
\[
\mathcal{A}'^{r, s}
=
\Bigg\{
\begin{array}{ccc}
\mathcal{A}^{r, s} \quad &\text{if } s \leqslant p, \\
0 \quad &\text{if } s > p.
\end{array}.
\]
The {\sl cutting of  }$\mathcal{A}^{\bullet, \bullet}$ {\sl from below at degree} $p$ is defined to be
\[
\mathcal{A}'^{r, s}
=
\Bigg\{
\begin{array}{ccc}
0 \quad &\text{if } s \leqslant p, \\
\mathcal{A}^{r, s} \quad &\text{if } s > p.
\end{array}.
\]
Similarly, the {\sl cutting of }$\mathcal{A}^{\bullet, \bullet}$ {\sl from the right at degree} $p$ is defined to be
\[
\mathcal{A}'^{r, s}
=
\Bigg\{
\begin{array}{ccc}
\mathcal{A}^{r, s} \quad &\text{if } r \leqslant p, \\
0 \quad &\text{if } r > p.
\end{array}.
\]
The {\sl cutting of  }$\mathcal{A}^{\bullet, \bullet}$ {\sl from the left at degree} $p$ is defined to be
\[
\mathcal{A}'^{r, s}
=
\Bigg\{
\begin{array}{ccc}
0 \quad &\text{if } r \leqslant p, \\
\mathcal{A}^{r, s} \quad &\text{if } r > p.
\end{array}.
\]
In each case, the differential $\partial'$ in bidegree $(r, s)$ of $\mathcal{A}'$:
\[
\partial_{r, s}'
:
\mathcal{A}'^{r, s} \longrightarrow \mathcal{A}'^{r+1, s},
\]
is defined to be equal to the differential $\partial_{r, s}$ of $\mathcal{A}$ if $\mathcal{A}'^{r, s} = \mathcal{A}^{r, s}$ and $\mathcal{A}'^{r+1, s} = \mathcal{A}^{r+1, s}$, and to be the zero map otherwise (If  $\mathcal{A}'^{r, s} = 0$ or $\mathcal{A}'^{r+1, s} = 0$). 

The vertical differential $\overline{\partial}'$ of $\mathcal{A}'$ is defined in the same way.
\end{Definition}
Some cuttings might not fundamentaly change the double complex. For instance, if the double complex is bounded, a cutting from above at degree $p$ will not do anything if $p$ is large enough. 
\begin{Definition}
Let $\mathcal{A}$ be a double complex and consider one of its cutting $\mathcal{A}'$ (from above, the bottom, the right or the left). We say that the cutting is {\sl strict} if $\mathcal{A}'$ is not isomorphic to $\mathcal{A}$ as a double complex.
\end{Definition}

\begin{Example}
We show the zigzag $Z^5_{4, 4}$, before and after being cut from above at degree $2$ and from the right at degree $3$.
\[
\begin{tikzpicture}[inner sep=0pt, outer sep=0pt, remember picture]
\draw (0, 0) -- (3.75, 0) -- (3.75, 3.75) -- (0, 3.75) -- (0, 0);
\draw (0, .75) -- (3.75, .75);
\draw (0, 1.5) -- (3.75, 1.5);
\draw (0, 2.25) -- (3.75, 2.25);
\draw (0, 3) -- (3.75, 3);
\draw (0, 3.75) -- (3.75, 3.75);
\draw (.75, 0) -- (.75, 3.75);
\draw (1.5, 0) -- (1.5, 3.75);
\draw (2.25, 0) -- (2.25, 3.75);
\draw (3, 0) -- (3, 3.75);
\draw (3.75, 0) -- (3.75, 3.75);
\draw(1.125, 3.375) -- (1.125, 2.625) -- (1.875, 2.625) -- (1.875, 1.875) -- (2.625, 1.875) -- (2.625, 1.125) -- (3.375, 1.125);
\end{tikzpicture}
\quad
\begin{tikzpicture}[inner sep=0pt, outer sep=0pt, remember picture]
\draw (0, 0) -- (3.75, 0) -- (3.75, 3.75) -- (0, 3.75) -- (0, 0);
\draw (0, .75) -- (3.75, .75);
\draw (0, 1.5) -- (3.75, 1.5);
\draw (0, 2.25) -- (3.75, 2.25);
\draw (0, 3) -- (3.75, 3);
\draw (0, 3.75) -- (3.75, 3.75);
\draw (.75, 0) -- (.75, 3.75);
\draw (1.5, 0) -- (1.5, 3.75);
\draw (2.25, 0) -- (2.25, 3.75);
\draw (3, 0) -- (3, 3.75);
\draw (3.75, 0) -- (3.75, 3.75);
\draw[dashed] (1.125, 3.375) -- (1.125, 2.625) -- (1.875, 2.625) -- (1.875, 1.875) ;
\draw (1.875, 1.875) -- (2.625, 1.875) -- (2.625, 1.125);
\draw[dashed] (2.625, 1.125) -- (3.375, 1.125);
\end{tikzpicture}
\]
Notice how the total degree of the first zigzag is $5$, whereas the total degree of the zigzag obtained after doing the cutting is $4$.
\end{Example}

The dimension $h^{k}_{p, q}$ of $H^k_{p, q}$ can be computed in the following way: We start by choosing a minimal representative of the double complex (equivalently, we remove all the squares). Then, we distinguish two cases:
\begin{itemize}
\item If $k \leqslant p + q$, then cut the complex from above at degree $q$ and from the right at degree $p$. The dimension $h^{k}_{p, q}$ is then simply equal to the $k^{\text{th}}$ Betti number of the new double complex $\tilde{A}$:
\[
h^{k}_{p, q} = b^k(\tilde{A}).
\]
\item If $k > p+q$, then cut the complex from below at degree $q$ and from the left at degree $p$. The dimension $h^{k}_{p, q}$ is then simply equal to the $k+1^{\text{th}}$ Betti number of the new double complex $\tilde{A}$:
\[
h^{k}_{p, q} = b^{k+1}(\tilde{A}).
\]
\end{itemize}

We are interested about the information carried by the Bigolin numbers concerning the zigzags appearing in the decomposition of the double complex. There are two types of zigzags, the odd zigzags and the even ones. We recall that an odd zigzag has its components in two total degrees only. It is said to have total degree $n$ if most of its components are in total degree $n$. We refer to Definition~\ref{odd_zigzags} and to Notations~\ref{notations_zigzags} for odd zigzags, which are the same notations as in~\cite{Ste}.

We will show that the Bigolin numbers are enough to determine the odd zigzags of a double complex. This will be a consequence of the following lemma:
\begin{Lemma}
\label{lemme1}
For every integer $p, q$ and $k$, the odd zigzags contributing to the dimension $h^k_{p, q}$ of $H^k_{p, q}$ are either of the form $Z^{u, v}_k$ with $u \leqslant p$ and $v \leqslant q$, or of the form $Z^{u, v}_{k+1}$ with $u \geqslant p+1$ and $v \geqslant q+1$.
\end{Lemma}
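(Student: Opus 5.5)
By additivity, the plan is to reduce to a single odd zigzag and read off its contribution from the cutting description of $h^k_{p,q}$ given above. The Bigolin cohomology is oblivious to squares and commutes with direct sums, and so do the cuttings. So we fix a minimal representative and look at one odd zigzag $Z$ of total degree $m$. The task is to decide when the cut complex $\tilde Z$ has nonzero Betti number in the relevant degree. The two cases $k\leqslant p+q$ and $k>p+q$ are exchanged by the symmetry $(r,s)\mapsto(-r,-s)$, which reverses all arrows and swaps the two kinds of cuttings. This is why the shift $b^{k+1}$ appears in the second case. We therefore treat $k\leqslant p+q$ in detail, where $h^k_{p,q}=b^k(\tilde Z)$ and $\tilde Z$ is $Z$ cut from above at $q$ and from the right at $p$.

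\textbf{Step 1: the cut is a single segment.} A zigzag is monotone: moving from its top-left to its bottom-right component, the column index $r$ weakly increases and the row index $s$ weakly decreases. Hence its intersection with the quadrant $\{r\leqslant p,\ s\leqslant q\}$ is a connected sub-zigzag, possibly empty. The cut therefore removes an initial segment, where $s>q$, and a final segment, where $r>p$. The surviving piece $\tilde Z$ is again a zigzag, and by the Lemma computing Betti numbers it contributes to $b^k$ exactly when it is odd of total degree $k$.

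\textbf{Step 2: endpoint analysis.} The parity and total degree of a zigzag are determined by its two endpoints. If both are sinks (only incoming arrows), it is odd and looking up, with total degree that of the endpoints. If both are sources, it is odd and looking down, with total degree that of the endpoints. If there is one of each, it is even. When the cut from above crosses a vertical arrow, the surviving lower node of that arrow is its tail. Its other arrow, if any, is horizontal and also outgoing, since it lies on the same side of the zigzag. So the new endpoint is a source, of total degree one less than the removed node. The same holds for the cut from the right, which crosses a horizontal arrow. This yields the following cases.
\begin{itemize}
\item[(i)] If nothing is cut, $\tilde Z=Z$. Then $m=k$, and $Z$ lies in the quadrant. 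With Notation~\ref{notations_zigzags}, its label $(u,v)$ satisfies $u\leqslant p$ and $v\leqslant q$.
\item[(ii)] If $Z$ looks up and exactly one end is cut, the endpoints are mixed and $\tilde Z$ is even, so it contributes nothing.
\item[(iii)] If $Z$ looks up and both ends are cut, both new endpoints are sources in degree $m-1$, so $\tilde Z$ looks down with total degree $m-1=k$. The label of $Z$ is $(a',b)$, where $a'$ is its rightmost column and $b$ its top row. Both ends were strictly cut, so $a'\geqslant p+1$ and $b\geqslant q+1$. Thus $Z=Z^{u,v}_{k+1}$ with $u\geqslant p+1$ and $v\geqslant q+1$.
\item[(iv)] If $Z$ looks down, the endpoints remain sources in degree $m$, so $\tilde Z$ looks down with total degree $m=k$. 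The label $(a,b')$ of $Z$ satisfies $a\leqslant p$ and $b'\leqslant q$. Indeed, $a$ is at most the column of any surviving component, and $b'$ is at most the row of any surviving component.
\end{itemize}
Dots are covered by (i).

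\textbf{Main obstacle.} The delicate point is the bookkeeping in Step 2. One must verify that the endpoint created by a cut always has the claimed orientation, in every case where the cut falls next to a bend of the zigzag. One must also check that the labels of Notation~\ref{notations_zigzags} translate into exactly the inequalities $u\leqslant p,\ v\leqslant q$, respectively $u\geqslant p+1,\ v\geqslant q+1$, without an off-by-one error. I would check the small cases (lengths $3$ and $5$, each cut position) explicitly, and then argue by induction on the length. Once this is done, the case $k>p+q$ follows from the symmetry described at the start.
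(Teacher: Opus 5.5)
Your proof is correct and follows the paper's route. You reduce to a single odd zigzag, split by looking up/down and by which cuts are strict, read off the labels, and transfer the case $k>p+q$ by the point reflection, which is the paper's relation $h^k_{p,q}(Z^{u,v}_l)=h^{-k-1}_{-p-1,-q-1}(Z^{-u,-v}_{-l})$. Your source/sink endpoint argument in fact proves the cutting Property that the paper only checks on pictures, so the small-case checks and induction under \textbf{Main obstacle} are unnecessary; your bounds $u\geqslant p+1$, $v\geqslant q+1$ in case (iii) are the right ones, whereas the paper's proof text slips to $u\geqslant p$, $v\geqslant q$.
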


\begin{Example}
\[
\begin{tikzpicture}[inner sep=0pt, outer sep=0pt, remember picture]
\draw (0, 0) -- (3.75, 0) -- (3.75, 3.75) -- (0, 3.75) -- (0, 0);
\draw (0, .75) -- (3.75, .75);
\draw (0, 1.5) -- (3.75, 1.5);
\draw (0, 2.25) -- (3.75, 2.25);
\draw [line width=.8mm, red] (0, 3) -- (3.75, 3);
\draw (0, 3.75) -- (3.75, 3.75);
\draw (.75, 0) -- (.75, 3.75);
\draw [line width=.8mm, red] (1.5, 0) -- (1.5, 3.75);
\draw (2.25, 0) -- (2.25, 3.75);
\draw (3, 0) -- (3, 3.75);
\draw (3.75, 0) -- (3.75, 3.75);
\filldraw (1.125, 2.625) circle (3pt);
\draw [dashed] (0, 3) -- (3, 0);
\fill [pattern=crosshatch dots, pattern color=red]
    (1.5,3.75) rectangle (3.75, 3);
\fill [pattern=crosshatch dots, pattern color=blue]
    (1.5,3) rectangle (0, 0);
\end{tikzpicture}
\]
For the bidegree $(1, 3)$ (black dot), and the total degree $3$ (dashed line), the odd zigzags of total degree $3$ contributing to $h^3_{1, 3}$ are of the form $Z^{u, v}_3$ with $u \leqslant 1$ and $v \leqslant 3$. These bidegrees correspond to the dashed blue area in the picture. Furthermore, the odd zigzag of total degree $4$ contributing to $h^{3}_{1, 3}$ are of the form $Z^{u, v}_{4}$ with $u > 1$ and $v > 3$. These bidegrees correspond to the dashed red area in the picture.
\end{Example}

In order to prove the lemma, we state some facts that can be directly verified on some pictures:
\begin{Property}
Let $Z$ be an odd zigzag of total degree $l$:
\begin{itemize}
\item If $Z$ is looking down, then any cutting from above of from the right turns it into an odd zigzag of total degree $l$.
\item If $Z$ is looking up, then any strict cutting from the top or from the right turns it into an even zigzag.
\item If $Z$ is looking up, then any strict cutting from the top followed by a strict cutting from the right turns it into an odd zigzag of total degree $l-1$.
\end{itemize}
\end{Property}
We are now able to prove the Lemma:
\begin{proof}[proof of Lemma~\ref{lemme1}]
We first consider the case where $k \leqslant p+q$. Let $u, v$ and $l$ be integers such that $h^k_{p, q}(Z_l^{u, v}) \neq 0$. The zigzag $Z_l^{u, v}$ is, after a cutting from above and from the right, of total degree $k$. This means that $Z_{l}^{u, v}$ is of total degree $k-1, k$ or $k+1$. However, if its total degree were $k-1$, then the zigzag would be looking down. Since cutting from above and from the right does not change the total degree of such a zigzag, we are not in that case. We deduce that the total degree of $Z_l^{u, v}$ is $l = k$ or $l = k+1$.

Suppose that $l = k$. If $u+v = k$, then $Z^{u, v}_k$ is a dot and clearly $h^k_{p, q}(Z^{u, v}_k) \neq 0$ if and only if $u \leqslant p$ and $v \leqslant q$. If $u+v < k$, then $Z^{u, v}_k$ is looking down, and by the Property any cutting from the top and the right either turns it into an odd zigzag of total degree $k$, or it vanishes completely. For the zigzag not to vanish completely, it needs to have one component of bidegree $(r, s)$ with $r\leqslant p$ and $s \leqslant q$, or equivalently we must have $u \leqslant p$ and $v \leqslant q$. Now suppose that $u+v > k$. $Z^{u, v}_k$ is looking up. By the Property, $h^k_{p, q}(Z^{u, v}_k) \neq 0$ if and only if the cut from the top at degree $q$ and the cut from the right at degree $p$ are both not strict. Equivalently, we must have $u \leqslant p$ and $v \leqslant q$.

Suppose now that $l = k+1$. Since $h^k_{p, q}(Z^{u, v}_{k+1}) \neq 0$, $Z^{u, v}_{k+1}$ has components in total degree $k$ and is thus looking up. By the Property, we need the cutting from the top at degree $q$ and from the right at degree $p$ to be both strict. Equivalently, we must have $u \geqslant p$ and $v \geqslant q$.

The case $k > p+q$ can be reduced o the previous case, by using the relation
\[
h^k_{p, q}(Z^{u, v}_l)
=
h^{-k-1}_{-p-1, -q-1}(Z^{-u,-v}_{-l}).
\]
\end{proof}

We will now state a similar lemma for even zigzags. We recall that there are two types of even zigzags, the horizontal ones and the vertical ones:
\[
\begin{tikzpicture}
\draw (1, 0) -- (0, 0) -- (0, 1) -- (-1, 1);
\fill [black] (1,0) circle [radius=.1];
\fill [black] (0,0) circle [radius=.1];
\fill [black] (0,1) circle [radius=.1];
\fill [red] (-1,1) circle [radius=.1];
\end{tikzpicture}
\quad
\begin{tikzpicture}
\draw (1, -1) -- (1, 0) -- (0, 0) -- (0, 1);
\fill [black] (1,0) circle [radius=.1];
\fill [red] (1,-1) circle [radius=.1];
\fill [black] (0,0) circle [radius=.1];
\fill [black] (0,1) circle [radius=.1];
\end{tikzpicture}
\]
In both pictures, the red dot will be refered to as the {\sl starting point}. An even zigzag will be denoted by $Z^{p, q}_{\epsilon, l}$, where $(p, q)$ is the bidegree of the starting point, $2l$ is the length of the zigzag, and $\epsilon = 1$ if the zigzag is horizontal and $\epsilon = 2$ if the zigzag is vertical. For example, in the above picture, the zigzag on the left is denoted $Z^{0, 1}_{1, 2}$ and the zigzag on the right $Z^{1, 0}_{2, 2}$. Here are some properties about the behaviour of even, horizontal zigzags when cut:
\begin{Property}
\label{cuts_even}
Let $Z$ be an even, horizontal zigzag.
\begin{itemize}
\item If we cut $Z$ from above or from below, we obtain an even zigzag (which might have length $0$).
\item If we cut $Z$ from the right, we are in one of the following cases:
\begin{itemize}
\item None of the components of $Z$ were cut off, and $Z$ is left unchanged.
\item All of the components of $Z$ were cut off, and we obtain an empty zigzag.
\item Some, but not all, of the components of $Z$ were cut off, and we obtain an odd zigzag whose total degree is equal to the total degree of the starting point of $Z$.
\end{itemize}
\item If we cut $Z$ from the left, we are in one of the following cases:
\begin{itemize}
\item None of the components of $Z$ were cut off, and $Z$ is left unchanged.
\item All of the components of $Z$ were cut off, and we obtain an empty zigzag.
\item Some, but not all, of the components of $Z$ were cut off, and we obtain an odd zigzag whose total degree is equal to $1$ plus the total degree of the starting point of $Z$.
\end{itemize}
\end{itemize}
\end{Property}
We will now state a Lemma that describes the horizontal even zigzag contributing to some Bigolin number.
\begin{Lemma}
\label{lemme2}
Let $k \in \mathbb{Z}$ be some degree and $(p, q)$ a bidegree. Then an even, horizontal zigzag $Z^{u, v}_{1, l}$ contributes to the Bigolin number $h^k_{p, q}$ if and only if $u + v = k$ and $u \leqslant p < u+l$. 
\end{Lemma}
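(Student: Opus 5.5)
The plan is to reduce everything to the recipe stated before Lemma~\ref{lemme1}: after removing squares, $h^k_{p,q}$ is a Betti number of a suitably cut complex. We then track what happens to a single horizontal zigzag $Z=Z^{u,v}_{1,l}$ under the two cuts, using Property~\ref{cuts_even}.

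First we write $Z$ explicitly and set $k_0=u+v$. Reading the picture, its components are
\begin{itemize}
\item $(u+i,v-i)$ for $0\leqslant i\leqslant l-1$, all of total degree $k_0$;
\item $(u+i+1,v-i)$ for $0\leqslant i\leqslant l-1$, all of total degree $k_0+1$.
\end{itemize}
The arrows are $(u+i,v-i)\to(u+i+1,v-i)$ horizontally and $(u+i+1,v-i-1)\to(u+i+1,v-i)$ vertically. So the starting point $(u,v)$ is the top-left component, and $Z$ occupies the columns $u,\dots,u+l$.

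\emph{Case $k\leqslant p+q$.} We cut from above at height $q$, then from the right at $p$, and compute $b^k$ of the result.
\begin{itemize}
\item Cutting from above removes the rows $s>q$. These rows remove pairs $(u+i,v-i),(u+i+1,v-i)$ from the top. What is left is again a horizontal even zigzag, now starting at $(u+i_0,v-i_0)$ with $i_0=\max(0,v-q)$, or it is empty if $i_0\geqslant l$. Its starting point still has total degree $k_0$.
\item By Property~\ref{cuts_even}, the subsequent cut from the right yields one of three things. If it removes nothing, we have an even zigzag with zero Betti numbers. If it removes everything, we have the empty complex. If it removes some but not all columns, we have an odd zigzag of total degree $k_0$, which contributes exactly $1$ to $b^{k_0}$ and $0$ elsewhere.
\end{itemize}
Hence $Z$ contributes to $h^k_{p,q}$ iff three conditions hold: $k=k_0=u+v$, the rightmost column satisfies $u+l>p$, and the leftmost surviving column satisfies $u+i_0\leqslant p$. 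The third condition splits into $u\leqslant p$ and $v-q\leqslant p-u$. The latter is $k_0\leqslant p+q$, which holds automatically once $k=k_0\leqslant p+q$. This inequality also guarantees $i_0<l$ whenever $u+l>p$, so the first cut is nonempty. Altogether the conditions reduce exactly to $u+v=k$ and $u\leqslant p<u+l$.

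\emph{Case $k>p+q$.} We cut from below at $q$ and from the left at $p$, and compute $b^{k+1}$.
\begin{itemize}
\item Cutting from below removes pairs from the bottom-right end, leaving a horizontal even zigzag with the same starting point $(u,v)$, now ending at column $u+\min(l,v-q)$.
\item By Property~\ref{cuts_even}, a strict but not total cut from the left gives an odd zigzag of total degree $k_0+1$.
\end{itemize}
So we need $k+1=k_0+1$, i.e.\ $u+v=k$, together with $u\leqslant p$ and $u+\min(l,v-q)>p$. Since $v-q=k-u-q>p-u$ holds automatically from $k>p+q$, the last condition becomes $u+l>p$, giving the same answer.

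Alternatively, one can get this case from the first one via the duality $h^k_{p,q}(Z)=h^{-k-1}_{-p-1,-q-1}(-Z)$ used in the proof of Lemma~\ref{lemme1}. One would then need to check that point reflection sends $Z^{u,v}_{1,l}$ to the horizontal zigzag $Z^{-u-l,-v+l-1}_{1,l}$, and that the conditions transform correctly.

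The main obstacle I expect is the bookkeeping of which components survive the two successive cuts. In particular we must verify that the conditions on $q$ really are implied by $u+v=k$ and the sign of $k-(p+q)$, so that $q$ drops out of the final statement. A secondary point is to check that performing the two cuts in the other order gives the same result, so that the recipe is unambiguous.
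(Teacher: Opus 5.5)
Your proposal is correct and follows the paper's proof. Both reduce $h^k_{p,q}$ to a Betti number of the cut zigzag and apply Property~\ref{cuts_even}; the conditions involving $q$ are then absorbed by $u+v=k$ together with the sign of $k-(p+q)$. Your explicit tracking of the shifted starting point $(u+i_0,v-i_0)$ is a more detailed version of the paper's remark that the component in bidegree $(p,k-p)$ survives the cut from above, and likewise that the component in bidegree $(p+1,k-p)$ survives the cut from below.
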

\begin{proof}
Suppose first that $k \leqslant p+q$. By the Property~\ref{cuts_even}, a cutting from above and from the right turns $Z^{u, v}_{1, l}$ into an odd zigzag of total degree $k$ if and only if the starting point $(u, v)$ is of total degree $u+v=k$ and some components are cut off from the right, but not all. This last condition is equivalent to $u \leqslant p < u+l$. Note that in that case the zigzag has a nonzero component in bidegree $(p, v-(p-u))$, which does not vanish after the cutting from above as $v-(p-u) = k-p \leqslant q$.

Now suppose that $k > p+q$. Again by the Property~\ref{cuts_even}, a cutting from below and from the left turns $Z^{u, v}_{1, l}$ into an odd zigzag of total degree $k+1$ if and only if the starting point $(u, v)$ is of total degree $u+v=k$ and some components are cut off from the left, but not all. This last condition is equivalent to $u \leqslant p < u+l$. Note that in that case the zigzag has a nonzero component in bidegree $(p+1, v-(p+1-u)+1)$, which does not vanish after the cutting from below as $v-(p+1-u)+1=k-p>q$.
\end{proof}

For every degree $k$ and bidegree $(p, q)$, we denote the number of odd zigzags of the form $Z^{p, q}_k$ in the decomposition of a double complex $\mathcal{A}$ by $b^k_{p, q}$. Furthermore, we define the number $\tilde{h}^k_{p, q}$ as follows:
\begin{equation}
\label{eq_bigolin}
\tilde{h}^k_{p, q}
=
h^k_{p-1, q-1} - h^k_{p-1, q} - h^k_{p, q-1} + h^k_{p, q}.
\end{equation}
\begin{Theorem}
\label{submain}
For every bounded double complex $\mathcal{A}$ the following holds:
\begin{equation}
\label{eq_main}
\tilde{h}^{k}_{p, q}
-
\tilde{h}^{k-1}_{p, q}
+
\tilde{h}^{k-2}_{p, q}
-
\cdots
=
b^{k+1}_{p, q}.
\end{equation}
\end{Theorem}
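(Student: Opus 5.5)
The plan is to reduce the identity \eqref{eq_main} to single irreducible summands. Both sides are additive in $\mathcal{A}$. The Bigolin numbers commute with direct sums and vanish on squares, and $b^{k+1}_{p,q}$ is by definition a multiplicity. So it suffices to check \eqref{eq_main} when $\mathcal{A}$ is a single zigzag (odd, even horizontal, or even vertical). Since $\mathcal{A}$ is bounded, only finitely many zigzags occur, so the alternating sum on the left is finite: all terms vanish for $k-j$ small.

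First I will record that a single zigzag contributes $0$ or $1$ to each $h^k_{p,q}$. This holds because, by the recipe following the definition of the Bigolin complex, $h^k_{p,q}$ is a Betti number of a cut zigzag. A cut of a zigzag is again a single zigzag, possibly empty, so its Betti numbers are at most $1$. The contribution is therefore an indicator function of $(p,q)$, and I will compute these indicators family by family.

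\emph{Odd zigzags.} The proof of Lemma~\ref{lemme1} actually gives an if-and-only-if. The zigzag $Z^{u,v}_k$ contributes to $h^k_{p,q}$ exactly when $u\le p$ and $v\le q$. The zigzag $Z^{u,v}_{k+1}$ contributes exactly when $u\ge p+1$ and $v\ge q+1$.

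\emph{Even zigzags.} By Lemma~\ref{lemme2}, $Z^{u,v}_{1,l}$ contributes exactly when $u+v=k$ and $u\le p<u+l$. This condition does not involve $q$.

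\emph{Vertical even zigzags.} I expect these to be the main (if mild) obstacle, since they are not treated explicitly. I would obtain the analogue of Lemma~\ref{lemme2} by the symmetry exchanging the two gradings, i.e. swapping $\partial\leftrightarrow\overline{\partial}$ and $(p,q)\leftrightarrow(q,p)$. Under this swap the Bigolin complex goes to itself and vertical zigzags go to horizontal ones. The result is that $Z^{u,v}_{2,l}$ contributes to $h^k_{p,q}$ iff $u+v=k$ and $v\le q<v+l$, a condition independent of $p$. If the symmetry argument feels too quick, the fallback is to redo the proof of Lemma~\ref{lemme2} with the roles of cutting from above/below and from right/left exchanged.

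Next I will apply the mixed second difference $\Delta f(p,q)=f(p-1,q-1)-f(p-1,q)-f(p,q-1)+f(p,q)$ from \eqref{eq_bigolin}. The difference kills every function that depends on only one of $p,q$, so even zigzags of both kinds contribute $0$ to every $\tilde h^k_{p,q}$. For a product $F(p)G(q)$ it gives $(F(p-1)-F(p))(G(q-1)-G(q))$.
\begin{itemize}
\item With $F=\mathbf{1}[u\le p]$ and $G=\mathbf{1}[v\le q]$, each factor equals $-\delta_{u,p}$ and $-\delta_{v,q}$ respectively, so the product is $\delta_{u,p}\delta_{v,q}$.
\item With $F=\mathbf{1}[u>p]$ and $G=\mathbf{1}[v>q]$, each factor equals $+\delta_{u,p}$ and $+\delta_{v,q}$, so the product is again $\delta_{u,p}\delta_{v,q}$.
\end{itemize}
Summing over the decomposition therefore gives
\[
\tilde h^k_{p,q}=b^k_{p,q}+b^{k+1}_{p,q}.
\]

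Finally, I substitute this into the left-hand side of \eqref{eq_main}:
\[
\sum_{j\ge0}(-1)^j\big(b^{k-j}_{p,q}+b^{k-j+1}_{p,q}\big).
\]
This sum telescopes, and since only finitely many terms are nonzero by boundedness, it equals $b^{k+1}_{p,q}$, as claimed.
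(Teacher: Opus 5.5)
Your proof is correct and follows the paper's argument. You read Lemma~\ref{lemme1} as an if-and-only-if, so that the inclusion--exclusion $\tilde h^k_{p,q}$ isolates exactly $Z^{p,q}_k$ and $Z^{p,q}_{k+1}$; you use Lemma~\ref{lemme2} and its vertical analogue to show that even zigzags are killed, which gives $\tilde h^k_{p,q}=b^k_{p,q}+b^{k+1}_{p,q}$, and you then telescope. Your product formula for the mixed difference and the explicit $\partial\leftrightarrow\overline{\partial}$ symmetry for vertical zigzags make precise what the paper does with pictures and with ``in a similar way''.
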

\begin{proof}
By Lemma~\ref{lemme1} we know which zigzags are represented in each term of the sum defining $\tilde{h}^{k}_{p, q}$. In fact, the sum is non zero only for $Z^{p, q}_{k+1}$ and $Z^{p, q}_k$. For the zigzags of total degree $k+1$, this is easily seen in the following picture, where the dashed area represent the bidegrees $(r, s)$ for which the zigzag $Z^{r, s}_{k+1}$ contributes to the Bigolin numbers of the sum.
\[
\begin{tikzpicture}[baseline=5.8ex, inner sep=0pt, outer sep=0pt, remember picture]
\draw (0, 0) -- (2, 0) -- (2, 2) -- (0, 2) -- (0, 0);
\draw (0, .5) -- (2, .5);
\draw (0, 1) -- (2, 1);
\draw (0, 1.5) -- (2, 1.5);
\draw (.5, 0) -- (.5, 2);
\draw (1, 0) -- (1, 2);
\draw (1.5, 0) -- (1.5, 2);
\fill [pattern=crosshatch dots, pattern color=red]
    (.5, 2) rectangle (2, .5);
\end{tikzpicture}
-
\begin{tikzpicture}[baseline=5.8ex, inner sep=0pt, outer sep=0pt, remember picture]
\draw (0, 0) -- (2, 0) -- (2, 2) -- (0, 2) -- (0, 0);
\draw (0, .5) -- (2, .5);
\draw (0, 1) -- (2, 1);
\draw (0, 1.5) -- (2, 1.5);
\draw (.5, 0) -- (.5, 2);
\draw (1, 0) -- (1, 2);
\draw (1.5, 0) -- (1.5, 2);
\fill [pattern=crosshatch dots, pattern color=red]
    (.5, 2) rectangle (2, 1);
\end{tikzpicture}
-
\begin{tikzpicture}[baseline=5.8ex, inner sep=0pt, outer sep=0pt, remember picture]
\draw (0, 0) -- (2, 0) -- (2, 2) -- (0, 2) -- (0, 0);
\draw (0, .5) -- (2, .5);
\draw (0, 1) -- (2, 1);
\draw (0, 1.5) -- (2, 1.5);
\draw (.5, 0) -- (.5, 2);
\draw (1, 0) -- (1, 2);
\draw (1.5, 0) -- (1.5, 2);
\fill [pattern=crosshatch dots, pattern color=red]
    (1, 2) rectangle (2, .5);
\end{tikzpicture}
+
\begin{tikzpicture}[baseline=5.8ex, inner sep=0pt, outer sep=0pt, remember picture]
\draw (0, 0) -- (2, 0) -- (2, 2) -- (0, 2) -- (0, 0);
\draw (0, .5) -- (2, .5);
\draw (0, 1) -- (2, 1);
\draw (0, 1.5) -- (2, 1.5);
\draw (.5, 0) -- (.5, 2);
\draw (1, 0) -- (1, 2);
\draw (1.5, 0) -- (1.5, 2);
\fill [pattern=crosshatch dots, pattern color=red]
    (1, 2) rectangle (2, 1);
\end{tikzpicture}
=
\begin{tikzpicture}[baseline=5.8ex, inner sep=0pt, outer sep=0pt, remember picture]
\draw (0, 0) -- (2, 0) -- (2, 2) -- (0, 2) -- (0, 0);
\draw (0, .5) -- (2, .5);
\draw (0, 1) -- (2, 1);
\draw (0, 1.5) -- (2, 1.5);
\draw (.5, 0) -- (.5, 2);
\draw (1, 0) -- (1, 2);
\draw (1.5, 0) -- (1.5, 2);
\fill [pattern=crosshatch dots, pattern color=red]
    (.5, 1) rectangle (1, .5);
\end{tikzpicture}.
\]
Furthermore, by Lemma~\ref{lemme2}, we see that $h^k_{p-1, q-1}(Z) = h^k_{p-1, q}(Z)$ and that $h^k_{p, q-1}(Z) = h^k_{p, q}(Z)$ for all even, horizontal zigzags $Z$. Thus $\tilde{h}^{k}_{p, q}$ vanishes on even, horizontal zigzags. The same is true for vertical even zigzags and can be proven in a similar way.
We deduce that
\[
\tilde{h}^{k}_{p, q} = b^{k+1}_{p, q} + b^k_{p, q}.
\]
Therefore
\[
\tilde{h}^{k}_{p, q}
-
\tilde{h}^{k-1}_{p, q}
+
\tilde{h}^{k-2}_{p, q}
-
\cdots
=
b^{k+1}_{p, q},
\]
where only a finite number of the terms in the sum are nonzero.
\end{proof}

\begin{Corollary}
Two bounded double complexes $\mathcal{A}_1$ and $\mathcal{A}_2$ are isomorphic if and only if they share the same Frölicher spectral sequences and the same Bigolin numbers.
\end{Corollary}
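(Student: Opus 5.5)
Taken literally, ``isomorphic'' cannot hold: squares are invisible to every cohomology, so two double complexes that differ by a square share all these invariants. I will therefore prove the statement with ``isomorphic'' read as ``quasi-isomorphic'', equivalently as having the same multiplicities of all zigzags. This is the meaningful statement in $\overline{\mathcal{U}^{\text{bounded, fin}}}$, since two bounded double complexes are quasi-isomorphic if and only if they have the same zigzag multiplicities.

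The forward direction is immediate. All the invariants in question (the Frölicher spectral sequence, its conjugate, and the Bigolin numbers) commute with direct sums and vanish on squares, so they are determined by the zigzag multiplicities. Equivalently, a quasi-isomorphism ($E_1$-isomorphism) of bounded double complexes induces isomorphisms on all these invariants, as recalled from \cite{Ste}.

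For the converse, I partition the zigzags into three classes and recover each one separately.

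\emph{Even horizontal zigzags.} By the Fact on the Frölicher spectral sequence, these multiplicities are determined: a zigzag $Z^{a,b}_{1,l}$ is detected exactly by a nonzero isomorphism on page $E_l$ from bidegree $(a,b)$ to $(a+l,b-l+1)$. So the multiplicity of $Z^{a,b}_{1,l}$ equals $\dim E_l^{a,b}-\dim E_{l+1}^{a,b}$, restricted to the contribution of that outgoing differential. More precisely, it is the rank of $d_l$ leaving $(a,b)$, which equals $\dim E_l^{a,b} - \dim\ker d_l$. Since odd zigzags have all differentials zero from $E_1$ onward, this rank is computable from page dimensions alone: the Frölicher data consists of the pages together with their differentials. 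If only the dimensions of the pages are given, I would instead use that the rank of $d_l$ out of $(a,b)$ plus the rank of $d_l$ into $(a,b)$ equals $\dim E_l^{a,b}-\dim E_{l+1}^{a,b}$. Then I induct on $a$ (boundedness gives a starting point where no arrow enters) to separate incoming from outgoing ranks.

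\emph{Even vertical zigzags.} These are handled symmetrically using the conjugate Frölicher spectral sequence.

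\emph{Odd zigzags.} Theorem~\ref{submain} expresses $b^{k+1}_{p,q}$ as the finite alternating sum of the quantities $\tilde h^{j}_{p,q}$, and each $\tilde h^{j}_{p,q}$ is an explicit combination \eqref{eq_bigolin} of Bigolin numbers. Boundedness makes the sum finite, since $\tilde h^j_{p,q}=b^{j+1}_{p,q}+b^j_{p,q}$ vanishes for $j$ small. Hence every odd zigzag multiplicity $b^{k}_{p,q}$, dots included, is determined by the Bigolin numbers.

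Since the three classes exhaust all zigzags, two bounded double complexes with equal Frölicher spectral sequences and equal Bigolin numbers have equal multiplicities of all zigzags, and are therefore quasi-isomorphic. The main obstacle is the even-zigzag step if the Frölicher data is read as page dimensions only; I would resolve it by the induction on $a$ described above.
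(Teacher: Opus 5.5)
Your proposal is correct and follows the paper's route. The paper gives no separate proof of this Corollary; it combines the Fact that the Frölicher spectral sequence and its conjugate determine the even zigzags with Theorem~\ref{submain}, which recovers every odd multiplicity $b^{k}_{p,q}$ from the Bigolin numbers. Your induction on $a$, which separates incoming from outgoing $d_l$-ranks when only page dimensions are known, correctly fills in the even-zigzag step that the paper asserts without proof. Your reading of ``isomorphic'' as ``having the same zigzag multiplicities'', i.e.\ equality in $\overline{\mathcal{U}^{\text{bounded, fin}}}$, is the intended one, since squares contribute nothing to the Bigolin numbers or to the pages $E_r$ for $r\geq 1$.
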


\begin{Remark}
If a double complex $\mathcal{A}^{\bullet, \bullet}$ lies in the first quadrant, meaning that $\mathcal{A}^{p, q} = 0$ if $p < 0$ or $q < 0$, then $h^{l}_{p, q} = 0$ for all $l < 0$. Thus:
\[
\tilde{h}^k_{p, q} - 
\tilde{h}^{k-1}_{p, q} +
\tilde{h}^{k-2}_{p, q} -  
\cdots
+(-1)^{k}
\tilde{h}^{0}_{p, q}
=
b^{k+1}_{b, q}.
\]
This is the case for the double complexes assciated to compact complex manifolds.
\end{Remark}

To conclude this section, we shall present two examples where the Bigolin numbers fail to distinguish two non isomorphic double complexes. Let $\phi_{\mathcal{B}} : \mathcal{U}^{\text{bounded, fin}} \longrightarrow \mathbb{Z}^{\mathbb{N}}$ be the morphism which associates to every double complex $\mathcal{A}$ the array
\[
\phi_{\mathcal{B}}
(\mathcal{A})
=
(b_k^{p, q})_{p, q, k \in \mathbb{Z}}
\] 
of its Bigolin numbers.
\begin{Example}
In dimension $3$, the kernel of $\phi_{\mathcal{B}}$ is of dimension $2$ and has the following generators:
\[
\aligned
\begin{tikzpicture}[baseline=5.8ex, inner sep=0pt, outer sep=0pt, remember picture]
\draw (0, 0) -- (2, 0) -- (2, 2) -- (0, 2) -- (0, 0);
\draw (0, .5) -- (2, .5);
\draw (0, 1) -- (2, 1);
\draw (0, 1.5) -- (2, 1.5);
\draw (.5, 0) -- (.5, 2);
\draw (1, 0) -- (1, 2);
\draw (1.5, 0) -- (1.5, 2);
\draw (0.25, 0.85) -- (0.25, 1.15);
\draw (0.35, 0.75) -- (0.65, 0.75);
\draw (0.75, 0.35) -- (0.75, 0.65);
\draw (0.85, 0.25) -- (1.15, 0.25);
\draw (0.85, 1.75) -- (1.15, 1.75);
\draw (1.25, 1.65) -- (1.25, 1.35);
\draw (1.35, 1.25) -- (1.65, 1.25);
\draw (1.75, 1.15) -- (1.75, 0.85);
\end{tikzpicture}
-
\begin{tikzpicture}[baseline=5.8ex, inner sep=0pt, outer sep=0pt, remember picture]
\draw (0, 0) -- (2, 0) -- (2, 2) -- (0, 2) -- (0, 0);
\draw (0, .5) -- (2, .5);
\draw (0, 1) -- (2, 1);
\draw (0, 1.5) -- (2, 1.5);
\draw (.5, 0) -- (.5, 2);
\draw (1, 0) -- (1, 2);
\draw (1.5, 0) -- (1.5, 2);
\node at (0.15, 0.65) (a) {};
\node at (0.25, 1.25) (b) {};
\node at (0.85, 1.85) (c) {};
\node at (1.25, 1.75) (d) {};
\draw (a) -- +(0.5, 0) -- +(0.5, -0.5) -- +(1, -0.5);
\draw (b) -- +(0, -0.5) -- +(0.5, -0.5) -- +(0.5, -1);
\draw (c) -- +(0.5, 0) -- +(0.5, -0.5) -- +(1, -0.5);
\draw (d) -- +(0, -0.5) -- +(0.5, -0.5) -- +(0.5, -1);
\end{tikzpicture}
\in
\ker \phi_{\mathcal{B}}, \\
\begin{tikzpicture}[baseline=5.8ex, inner sep=0pt, outer sep=0pt, remember picture]
\draw (0, 0) -- (2, 0) -- (2, 2) -- (0, 2) -- (0, 0);
\draw (0, .5) -- (2, .5);
\draw (0, 1) -- (2, 1);
\draw (0, 1.5) -- (2, 1.5);
\draw (.5, 0) -- (.5, 2);
\draw (1, 0) -- (1, 2);
\draw (1.5, 0) -- (1.5, 2);
\draw (0.35, 1.25) -- (0.65, 1.25);
\draw (0.85, 1.25) -- (1.15, 1.25);
\draw (0.75, 1.65) -- (0.75, 1.35);
\draw (0.75, 1.15) -- (0.75, 0.85);
\draw (0.85, 0.75) -- (1.15, 0.75);
\draw (1.35, 0.75) -- (1.65, 0.75);
\draw (1.25, 1.15) -- (1.25, 0.85);
\draw (1.25, 0.65) -- (1.25, 0.35);
\end{tikzpicture}
-
\begin{tikzpicture}[baseline=5.8ex, inner sep=0pt, outer sep=0pt, remember picture]
\draw (0, 0) -- (2, 0) -- (2, 2) -- (0, 2) -- (0, 0);
\draw (0, .5) -- (2, .5);
\draw (0, 1) -- (2, 1);
\draw (0, 1.5) -- (2, 1.5);
\draw (.5, 0) -- (.5, 2);
\draw (1, 0) -- (1, 2);
\draw (1.5, 0) -- (1.5, 2);
\node at (0.15, 1.15) (a) {};
\node at (0.75, 1.25) (b) {};
\node at (0.9, 1.35) (c) {};
\node at (0.8, 1.75) (d) {};
\draw (a) -- +(0.5, 0) -- +(0.5, -0.5) -- +(1, -0.5);
\draw (b) -- +(0, -0.5) -- +(0.5, -0.5) -- +(0.5, -1);
\draw (c) -- +(0.5, 0) -- +(0.5, -0.5) -- +(1, -0.5);
\draw (d) -- +(0, -0.5) -- +(0.5, -0.5) -- +(0.5, -1);
\end{tikzpicture}
\in
\ker \phi_{\mathcal{B}}.
\endaligned
\]
However, these generators have non vanishing Dolbeault cohomologies. Therefore, in dimension $3$, a double complex can be decomposed if the Hodge diamond and the Bigolin numbers are known (see also~\cite{Piovani}). In the next example we will see that this is not true anymore if we increase the dimension.
\end{Example}
\begin{Example}
The following two double complexes
\[
\begin{tikzpicture}[inner sep=0pt, outer sep=0pt, remember picture, scale=.6]
\draw (0, 0) -- (5, 0) -- (5, 5) -- (0, 5) -- (0, 0);
\draw (0, 1) -- (5, 1);
\draw (0, 2) -- (5, 2);
\draw (0, 3) -- (5, 3);
\draw (0, 4) -- (5, 4);
\draw (1, 0) -- (1, 5);
\draw (2, 0) -- (2, 5);
\draw (3, 0) -- (3, 5);
\draw (4, 0) -- (4, 5);
\draw (.5, 3.5) -- (1.4, 3.5) -- (1.4, 2.5) -- (2.3, 2.5);
\draw (1.5, 4.5) -- (1.5, 3.6) -- (2.5, 3.6) -- (2.5, 2.7);
\draw (2.5, 2.3) -- (2.5, 1.4) -- (3.5, 1.4) -- (3.5, .5);
\draw (2.7, 2.5) -- (3.6, 2.5) -- (3.6, 1.5) -- (4.5, 1.5);
\draw (1.6, 3.4) -- (1.6, 2.7) -- (2.4, 2.7) -- (2.4, 1.6);
\draw (2.6, 3.4) -- (2.6, 2.3) -- (3.4, 2.3) -- (3.4, 1.6);
\draw (1.4, 2.2) -- (2.2, 2.2) -- (2.2, 1.2) -- (3.3, 1.2);
\draw (1.7, 3.8) -- (2.8, 3.8) -- (2.8, 2.8) -- (3.6, 2.8);
\end{tikzpicture}
\quad 
\begin{tikzpicture}[inner sep=0pt, outer sep=0pt, remember picture, scale=.6]
\draw (0, 0) -- (5, 0) -- (5, 5) -- (0, 5) -- (0, 0);
\draw (0, 1) -- (5, 1);
\draw (0, 2) -- (5, 2);
\draw (0, 3) -- (5, 3);
\draw (0, 4) -- (5, 4);
\draw (1, 0) -- (1, 5);
\draw (2, 0) -- (2, 5);
\draw (3, 0) -- (3, 5);
\draw (4, 0) -- (4, 5);
\draw (.3, 3.3) -- (1.3, 3.3) -- (1.3, 2.3) -- (2.3, 2.3) -- (2.3, 1.3) -- (3.3, 1.3);
\draw (1.4, 3.4) -- (1.4, 2.4) -- (2.4, 2.4) -- (2.4, 1.4) -- (3.4, 1.4) -- (3.4, .4);
\draw (4.7, 1.7) -- (3.7, 1.7) -- (3.7, 2.7) -- (2.7, 2.7) -- (2.7, 3.7) -- (1.7, 3.7);
\draw (3.6, 1.6) -- (3.6, 2.6) -- (2.6, 2.6) -- (2.6, 3.6) -- (1.6, 3.6) -- (1.6, 4.6);
\draw (1.6, 2.7) -- (2.2, 2.7);
\draw (3.4, 2.3) -- (2.8, 2.3);
\draw (2.3, 3.4) -- (2.3, 2.8);
\draw (2.7, 1.6) -- (2.7, 2.2);
\end{tikzpicture}
\]
share the same Bigolin numbers. Both have vanishing Betti numbers, as there are only even zigzags. Furthermore, the two double complexes are locally similar, which implies that they have the same Varouchas cohomologies. In particular, they share the same Hodge, Bott-Chern and Aeppli numbers.
\end{Example}

%%%%%%%%%%%%%%%%%%%%%%%%%%%%%%%%%%%%%%
\section{Lie algebras}
\label{lie}
In this section we will discuss about {\sl nilmanifolds}, which are a special class of complex manifolds. 
Recall that a {\sl nilpotent Lie algebra} is a Lie algebra $\mathfrak{g}$ whose lower central series
\[
\aligned
\mathfrak{g}^1 &= \mathfrak{g}, \\
\mathfrak{g}^{k+1} &= [\mathfrak{g}, \mathfrak{g}^{k}],
\endaligned
\]
satisfies $\mathfrak{g}^N = 0$ for some $N$.

By a Theorem of Mal'tsev \cite{AM}, the simply connected nilpotent Lie group $G$ associated to $\mathfrak{g}$ admits a lattice $\Gamma$ if and only if the Lie algebra $\mathfrak{g}$ has a $\mathbb{Q}$-structure, that means that there exists a Lie algebra $\mathfrak{g}'$ over the rationals such that 
\[
\mathfrak{g}
=
\mathfrak{g}' \otimes \mathbb{R}.
\]
If this holds, then we can consider the compact manifold $M = G / \Gamma$.

%%%%%%%%%%%%%%%%%%%%%%%%%%%%%%%%%%%%%
\subsection{The Chevalley-Eilenberg complex}
%%%%%%%%%%%%%%%%%%%%%%%%%%%%%%%%%%%%%
In this section $\mathfrak{g}$ will always denote a nilpotent Lie algebra, and $\mathfrak{g}^*$ its dual. The space of $k$-forms on $\mathfrak{g}$ is denoted by $\Lambda^k \mathfrak{g}^*$. 
We define a map 
$d: \mathfrak{g}^* \longrightarrow \Lambda^{2} \mathfrak{g}^*$
which sends a $1$-form $\omega$ to the $2$-form $d \omega$ defined by:
\[
d\omega(X, Y)
=
-\omega ([X, Y]).
\]
We extend $d$ so that it becomes a map of the graded algebra $\Lambda^{\bullet} \mathfrak{g}^*$ of degree $+1$ which satisfies the graded Leibniz rule:
\[
d(\alpha \wedge \beta)
=
(d\alpha) \wedge \beta
+
(-1)^{|\alpha|}\alpha \wedge (d\beta).
\]
Thus, the graded algebra $\Lambda^{\bullet}\mathfrak{g}^*$ becomes a differential algebra.
The $k$-forms in $\Lambda^k \mathfrak{g}^*$ correspond to left invariant forms of the associated Lie group $G$. Suppose that $G$ has a lattice $\Gamma$, so that $M = G/ \Gamma$ is a compact nilmanifold. The Nomizu Theorem~\cite{Nom} states that the inclusion of the left invariant forms into the space of all forms induces a quasi-isomorphism in the De Rham cohomology.
\begin{Theorem}
Let $M$ be a compact nilmanifold with Lie algebra $\mathfrak{g}$. We have an isomorphism
\[
H^{\bullet}(\mathfrak{g}, \mathbb{R})
\simeq
H^{\bullet}_{\text{dR}}(M, \mathbb{R}).
\]
\end{Theorem}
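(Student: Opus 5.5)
The plan is the classical proof of Nomizu's theorem: induct on the nilpotency step (equivalently on $\dim \mathfrak{g}$), using a principal torus fibration and comparing Leray--Serre type spectral sequences. We may arrange $\mathfrak{g}$ as a central extension. Since $\mathfrak{g}$ is nilpotent and rational (Mal'tsev), its center $\mathfrak{z}$ is nonzero. Moreover $\mathfrak{z}$ is rational, so $\mathfrak{z}\cap\Gamma$ (identifying via $\exp$) is a lattice in the central subgroup $Z=\exp\mathfrak{z}$. Consequently $\Gamma/(\Gamma\cap Z)$ is a lattice in $G/Z$, whose Lie algebra is $\mathfrak{g}/\mathfrak{z}$. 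This gives a principal torus bundle
\[
T = Z/(Z\cap\Gamma) \longrightarrow M = G/\Gamma \longrightarrow \overline{M} = (G/Z)/(\Gamma/\Gamma\cap Z),
\]
where $\overline{M}$ is a compact nilmanifold of smaller dimension. By induction, $H^\bullet(\mathfrak{g}/\mathfrak{z})\to H^\bullet_{dR}(\overline{M})$ is an isomorphism. The base case is the abelian one, where $M$ is a torus and invariant forms give the cohomology, either by averaging over the compact group action or by the classical computation.

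Next, filter both complexes compatibly. On $\Lambda^\bullet\mathfrak{g}^*$, use the Hochschild--Serre filtration associated to the ideal $\mathfrak{z}$. Its $E_2$ page is
\[
H^\bullet(\mathfrak{g}/\mathfrak{z}, H^\bullet(\mathfrak{z})) = H^\bullet(\mathfrak{g}/\mathfrak{z})\otimes\Lambda^\bullet\mathfrak{z}^*,
\]
and the coefficient module is trivial because $\mathfrak{z}$ is central. On $A^\bullet(M)$, use the Leray--Serre spectral sequence of the fibration, with $E_2 = H^\bullet(\overline{M}, \mathcal{H}^\bullet(T))$. The local system $\mathcal{H}^\bullet(T)$ is trivial since the bundle is principal with connected structure group, so $E_2 = H^\bullet(\overline{M})\otimes H^\bullet(T)$. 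The inclusion of invariant forms preserves the filtrations, because left-invariant forms that are of "fiber degree" $\le s$ are forms of the same kind on $M$. The map it induces on $E_2$ is the tensor product of the inductive isomorphism on the base with the torus isomorphism $\Lambda^\bullet\mathfrak{z}^*\simeq H^\bullet(T)$. Since both spectral sequences are first-quadrant and bounded, the comparison theorem gives an isomorphism on the abutments, which proves the theorem.

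The main obstacle I expect is justifying the de Rham version of the Leray--Serre spectral sequence and its identification on $E_2$. This means filtering $A^\bullet(M)$ by horizontal degree using a connection form built from the invariant connection $\mathfrak{g}\to\mathfrak{z}$ (a choice of complement). It also means checking that the $E_1$ page is fiberwise de Rham cohomology, which is computed by $T$-invariant forms via averaging. To make this clean, I would first reduce to $T$-invariant forms on $M$, which are quasi-isomorphic to all forms by averaging over the compact connected group $T$. I would then write $T$-invariant forms as $A^\bullet(\overline{M})\otimes\Lambda^\bullet\mathfrak{z}^*$, with differential twisted by the curvature (a Gysin-type complex). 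The same description applies to $\Lambda^\bullet\mathfrak{g}^* = \Lambda^\bullet(\mathfrak{g}/\mathfrak{z})^*\otimes\Lambda^\bullet\mathfrak{z}^*$ with the same curvature term, which is an invariant $2$-form on the base. The comparison then follows by filtering by $\mathfrak{z}^*$-degree and applying the inductive hypothesis on each graded piece.
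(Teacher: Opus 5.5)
The paper does not prove this statement; it quotes Nomizu's theorem. Your outline is correct, and it is essentially Nomizu's own argument. He also inducts on $\dim\mathfrak{g}$ through a principal bundle over a smaller nilmanifold, with toral fibre coming from the center, and compares the Hochschild--Serre spectral sequence of the central extension with the spectral sequence of the fibration. The difference is that, as far as I recall, he fibres by a one-dimensional central subgroup (a circle bundle) rather than by all of $\mathfrak{z}$.

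Of your two variants, the one in your last paragraph is the one to write up. After averaging over $T$, the invariant connection forms $\theta_i$ (dual to a basis of $\mathfrak{z}$, vanishing on a complement) have curvature $d\theta_i\in\Lambda^2(\mathfrak{g}/\mathfrak{z})^*$, precisely because $\mathfrak{z}$ is central. So both $A^\bullet(M)^T$ and $\Lambda^\bullet\mathfrak{g}^*$ have the form $(\text{base complex})\otimes\Lambda^\bullet\mathfrak{z}^*$ with the same twisted differential, and the inclusion is $\iota_{\overline{M}}\otimes\mathrm{id}$. The finite filtration by $\theta$-degree together with the five lemma then closes the induction. This avoids the de Rham Leray--Serre theorem and the naturality of its $E_2$-identification, which is the one delicate input of the spectral-sequence version.

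Two bookkeeping remarks. First, the center of a nonzero nilpotent Lie algebra is nonzero without any rationality. What the rational structure actually gives you is that $\Gamma\cap Z$ is a lattice in $Z$, so that $\Gamma Z$ is closed and $\overline{M}$ is again a compact nilmanifold. Second, for left-invariant forms to descend you should write $M=\Gamma\backslash G$ (the paper also writes $G/\Gamma$). That convention is also what makes the $T$-orbits tangent to the left-invariant fields from $\mathfrak{z}$, and the $\theta_i$ invariant under $T$.
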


An {\sl almost complex strucure} on $\mathfrak{g}$ is a linear map $J : \mathfrak{g} \longrightarrow \mathfrak{g}$ such that
$
J^2
=
-Id$.
We also denote by $J$ the dual map:
\[
\begin{array}{cccc}
J : & \mathfrak{g}^{\ast} &\longrightarrow & \mathfrak{g}^{\ast}\\
& \omega & \mapsto &\omega \circ J
\end{array},
\]
which has eigenvalues $i$ and $-i$. Thus, the space of complex forms
$
\mathfrak{g}_{\mathbb{C}}^{\ast}
=
\mathfrak{g}^{\ast}
\otimes
\mathbb{C}
$
writes as the sum of the eigenspaces :
\[
\mathfrak{g}_{\mathbb{C}}^{\ast}
=
{\mathfrak{g}^*}^{1, 0}
\oplus
{\mathfrak{g}^*}^{0, 1}.
\]
The almost complex structure on $\mathfrak{g}$ induces an almost complex structure on the Lie group $G$. 

Furthermore, by the Newlander-Nirenberg Theorem (\cite{NN}), the almost complex structure is induced by a complex structure if the following integrability condition is satisfied:
\[
J[X, Y]
=
J[JX, Y]
+
J[X, JY]
+
[JX, JY]
\quad \quad
\forall X, Y \in \mathfrak{g}.
\]
An integrable almost complex structure $J$ will be simply called a {\sl complex structure}.
For a complex structure $J$, the following holds:
\[
d{\mathfrak{g}^*}^{1, 0}
\subset
{\mathfrak{g}^*}^{1, 1}
\oplus
{\mathfrak{g}^*}^{2, 0}.
\]
Hence, $(\Lambda \mathfrak{g}^*, \partial, \overline{\partial})$ is a double complex.

We now give the definition of a {\sl nilpotent complex structure}:

\begin{Definition}
A complex structure $J$ is said to be {\sl nilpotent} if there is a basis $\{\omega^1, \omega^2, \dots, \omega^n\}$ of ${\mathfrak{g}^*}^{1, 0}$ such that for all $i \in \llbracket 1, n \rrbracket$:
\[
d\omega^{i+1}
\in
\Lambda^2 <\omega^1, \omega^2, \dots, \omega^i, \overline{\omega^1}, \overline{\omega^2}, \dots, \overline{\omega^i}>.
\]
\end{Definition}
For such a basis, we have in particular $d\omega^1 = 0$ and $d\omega^2 = A\omega^{1\overline{1}}$, for some constant $A \in \mathbb{C}$.

Special cases of nilpotent complex structures are given by {\sl abelian complex structures} ($d{\mathfrak{g}^*}^{1, 0} \subset \Lambda^{1, 1}\mathfrak{g}^*$) and {\sl complex parallelizable structures} ($d{\mathfrak{g}^*}^{1, 0} \subset \Lambda^{2, 0}\mathfrak{g}^*$).

Forms in $\Lambda^{p, q}\mathfrak{g}^*$ correspond to left invariant forms of the Lie group $G$. By a Theorem of Console and Fino (\cite{CF}), the inclusion of left invariant forms into the space forms of $M = G / \Gamma$ induces an isomorphism in Dolbeault cohomology.
\[
H_{\overline{\partial}}^{p, q}(\mathfrak{g})
\simeq
H_{\overline{\partial}}^{p, q}(M).
\]
Therefore, the inclusion of left invariant forms induces an $E_1$-isomorphism (an isomorphism on the first page of the Frölicher spectral sequence), which implies that the zigzags in the double complex of left invariant forms and the ones in the double complex of forms of $M$ have same multiplicities (\cite{Ste2}). In particular, Bott-Chern, Aeppli, Varouchas and Bigolin cohomologies can all be computed by the means of left invariant forms.
%%%%%%%%%%%%%%%%%%%%%%%%%%%%%%%%%%%%%%%%
\subsection{Lie algebras of dimension $6$}
\label{lie6}
%%%%%%%%%%%%%%%%%%%%%%%%%%%%%%%%%%%%%%%%
Nilpotent complex structures on $6$-dimensonal, nilpotent Lie algebras were classified in \cite{COUV}. We are interested in the decomposition of their double complexes. The double complexes satisfy the conditions of Definition~\ref{dc-acc}.

If $(\mathfrak{g}, J)$ is a $6$-dimensional nilpotent Lie algebra, with $J$ a nilpotent complex structure, then there exists a basis $\omega^1, \omega^2, \omega^3$ of ${\mathfrak{g}^*}^{1, 0}$ such that (\cite{Uga}):
\begin{equation}
\label{eq_struct_6}
\begin{array}{ccc}
d\omega^1 = 0, \\
d\omega^2 = \epsilon \omega^{1\overline{1}}, \\
d\omega^3 = \rho \omega^{12} + (1-\epsilon)A\omega^{1\overline{1}} + B\omega^{1\overline{2}} + C\omega^{2\overline{1}} + (1-\epsilon)D\omega^{2\overline{2}},
\end{array}
\end{equation}
with $A, B, C, D \in \mathbb{C}$ and $\epsilon, \rho \in \{0, 1\}$.

We will start by making some comments on the squares in a $6$-dimensional, nilpotent Lie algebra with nilpotent complex structure.
\begin{Lemma}
Let $\mathfrak{g}$ be a nilpotent Lie algebra of dimension $6$ with nilpotent complex structure $J$. If a square $S$ is a direct summand in the Chevalley-Eilenberg complex $\Lambda^{\bullet, \bullet} \mathfrak{g}^*$, then it has its components in bidegrees $(1, 1), (1, 2), (2, 1), (2, 2)$.
\end{Lemma}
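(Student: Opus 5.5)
The plan is to use the fact that in a decomposition of a double complex, the number of squares whose bottom-left corner sits in bidegree $(p,q)$ equals the rank of $\partial\overline{\partial} : \Lambda^{p,q}\mathfrak{g}^* \to \Lambda^{p+1,q+1}\mathfrak{g}^*$. Zigzags and squares with other corners contribute nothing to this map. Since $\dim_{\mathbb{C}} = 3$, a square has its corner at some $(p,q)$ with $p,q \in \{0,1,2\}$. The statement is therefore equivalent to showing that $\partial\overline{\partial}$ vanishes on $\Lambda^{p,q}\mathfrak{g}^*$ for every $(p,q) \neq (1,1)$ in this range.

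The first step is to use symmetries to reduce the number of cases.

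\emph{Conjugation.} Complex conjugation maps $\Lambda^{p,q}$ to $\Lambda^{q,p}$ and exchanges $\partial$ and $\overline{\partial}$. Hence the rank of $\partial\overline{\partial}$ on $\Lambda^{p,q}$ equals its rank on $\Lambda^{q,p}$.

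\emph{Duality.} The Lie algebra is nilpotent, hence unimodular, so $d$ vanishes on $\Lambda^5\mathfrak{g}^*_{\mathbb{C}}$ (Stokes on $\Lambda^{3,3}$). The wedge pairing $\Lambda^{p,q} \times \Lambda^{3-p,3-q} \to \Lambda^{3,3} \cong \mathbb{C}$ is perfect. Moreover, $\partial\overline{\partial}$ on $\Lambda^{p,q}$ is, up to sign, the adjoint of $\partial\overline{\partial}$ on $\Lambda^{2-p,2-q}$, by integrating by parts twice. So the ranks at corners $(p,q)$ and $(2-p,2-q)$ agree.

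Every $(p,q) \neq (1,1)$ with $p,q \in \{0,1,2\}$ has $p=0$, $q=0$, $p=2$ or $q=2$. Duality sends $p=2$ to $p=0$ and $q=2$ to $q=0$, and conjugation sends $q=0$ to $p=0$. It therefore suffices to prove $\partial\overline{\partial} = 0$ on $\Lambda^{0,q}$ for $q=0,1,2$. By conjugation again, this is the same as $\partial\overline{\partial} = 0$ on $\Lambda^{q,0}$.

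The second step is a direct computation with the structure equations \eqref{eq_struct_6}. For $q=0$ the claim is trivial, since $d$ kills constants.

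For $q=1$, we have $\partial\omega^1 = \partial\omega^2 = 0$ and $\overline{\partial}\omega^1 = 0$, so we only need $\partial\overline{\partial}\omega^3$. Expanding $\partial$ of each term of $\overline{\partial}\omega^3$ by Leibniz, the relevant ingredients are:
\begin{itemize}
\item $\partial\omega^1 = \partial\omega^2 = 0$;
\item $\partial\overline{\omega^1} = 0$;
\item $\partial\overline{\omega^2} = \overline{\overline{\partial}\omega^2} = -\epsilon\,\omega^{1\overline{1}}$.
\end{itemize}
Consequently the $A$, $B$ and $C$ terms give either zero or a multiple of $\omega^1 \wedge \omega^{1\overline{1}} = 0$. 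The $D$ term gives a multiple of $(1-\epsilon)\epsilon\,\omega^{2}\wedge\omega^{1\overline{1}}$, which vanishes because $\epsilon \in \{0,1\}$.

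For $q=2$, write $\partial\overline{\partial}(\omega^i\wedge\omega^j)$ using Leibniz. The terms $\partial\overline{\partial}\omega^i \wedge \omega^j$ vanish by the $q=1$ case. The remaining cross terms have the form $\pm\,\overline{\partial}\omega^i \wedge \partial\omega^j$. The only nonzero $\partial\omega^j$ is $\partial\omega^3 = \rho\,\omega^{12}$. Its partner is then $\overline{\partial}\omega^1 = 0$ or $\overline{\partial}\omega^2 = \epsilon\,\omega^{1\overline{1}}$, and in the latter case the wedge with $\omega^{12}$ vanishes.

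I expect the main obstacle to be the duality step: making sure that the integration-by-parts identity holds at the level of the Chevalley--Eilenberg complex, with consistent signs, and that it really identifies the ranks at $(p,q)$ and $(2-p,2-q)$. If this turns out to be delicate, the fallback is to bypass duality and compute $\partial\overline{\partial}$ directly on $\Lambda^{2,q}$ and $\Lambda^{p,2}$ from the structure equations, as in the second step. This is more bookkeeping but still routine.
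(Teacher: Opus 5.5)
Your proof is correct and follows the paper's strategy: use conjugation and duality to reduce to a few positions for the square, then check $\partial\overline{\partial}=0$ on $\Lambda^{1,0}$ from the structure equations, which is exactly the paper's one-line argument. The differences are minor. The paper excludes the corner positions by appealing to the symmetries and corner constraints of the double complex and takes duality for granted, whereas you verify $\Lambda^{0,0}$ and $\Lambda^{2,0}$ by direct computation and justify duality via unimodularity; the identity $\partial\overline{\partial}\alpha\wedge\beta=\alpha\wedge\partial\overline{\partial}\beta$ for $\alpha\in\Lambda^{p,q}$, $\beta\in\Lambda^{2-p,2-q}$ does hold, so the obstacle you anticipated is not a real one.
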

\begin{proof}
By the symmetries of the double complex, there is only one other spot for the square: the bidegrees $(1, 0), (1, 1), (2, 0), (2, 1)$. However, in bidegree $(1, 0)$, we clearly have $\partial \overline{\partial} \omega^i = 0$ for $i = 1, 2, 3$. 
\end{proof}
\begin{Lemma}
The complex $\Lambda^{\bullet, \bullet}\mathfrak{g}^*$ has at most one square in its decomposition. Furthermore, there is no square in the decomposition if and only if $\epsilon = 1$ and $d\omega^3=0$, or $\epsilon = 0$ and
\begin{equation}
\label{no-square}
\rho^2+|B|^2+|C|^2-2\Re (A\overline{D})
=
0.
\end{equation}
\end{Lemma}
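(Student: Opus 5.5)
The plan is to turn the count of squares into a rank computation for $\partial\overline{\partial}$. Restricted to any zigzag, $\partial\overline{\partial}$ is the zero map, since a zigzag never contains both a horizontal and a vertical arrow leaving the same component and composing to a nonzero map. On a square, $\partial\overline{\partial}$ is an isomorphism from the bottom-left corner to the top-right corner. So, using the decomposition theorem of \cite{Ste} and \cite{KQ}, the number of squares with bottom-left corner in bidegree $(p,q)$ equals $\operatorname{rank}\big(\partial\overline{\partial}:\Lambda^{p,q}\mathfrak{g}^*\to\Lambda^{p+1,q+1}\mathfrak{g}^*\big)$. By the previous Lemma, every square sits at $(1,1),(1,2),(2,1),(2,2)$. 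Hence the number of squares is the rank of $\partial\overline{\partial}:\Lambda^{1,1}\to\Lambda^{2,2}$, and both claims follow from computing this map on the basis $\omega^{i\overline{j}}$ with the structure equations \eqref{eq_struct_6}.

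\textbf{Step 1: the image lies in the span of $\omega^{12\overline{1}\overline{2}}$.} I will first record $\partial\omega^i$ and $\overline{\partial}\omega^i$ for $i=1,2,3$ and their conjugates, and check that $\partial\overline{\partial}\omega^3=0$. For instance $\overline{\partial}\partial\omega^3=\rho\,\overline{\partial}\omega^{12}=-\rho\epsilon\,\omega^1\wedge\omega^{1\overline{1}}=0$, and similarly for the other terms, since $d\omega^2$ is a multiple of $\omega^{1\overline{1}}$. Then I expand $\partial\overline{\partial}(\omega^i\wedge\overline{\omega^j})$ by the Leibniz rule. Every term in which a single form is hit by both $\partial$ and $\overline{\partial}$ vanishes. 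The surviving terms are products $d\alpha\wedge d\beta$ of differentials of $1$-forms. All differentials lie in $\Lambda^2\langle\omega^1,\omega^2,\overline{\omega^1},\overline{\omega^2}\rangle$, and the only $(2,2)$-form in these generators is $\omega^{12\overline{1}\overline{2}}$. Therefore the image is at most one-dimensional, which proves that there is at most one square.

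\textbf{Step 2: the coefficients.} Next I compute the scalar $c_{ij}$ with $\partial\overline{\partial}\omega^{i\overline{j}}=c_{ij}\,\omega^{12\overline{1}\overline{2}}$. The coefficient $c_{ij}$ vanishes unless $i=3$ or $j=3$, because $d\omega^1$ and $d\omega^2$ only contain $\omega^{1\overline{1}}$.

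For $\omega^{3\overline{3}}$, the surviving terms are $\partial\omega^3\wedge\overline{\partial}\,\overline{\omega^3}$ and $\overline{\partial}\omega^3\wedge\partial\overline{\omega^3}$, with appropriate signs. Pairing the monomials to reach $\omega^{12\overline{1}\overline{2}}$ gives:
\begin{itemize}
\item $\rho\cdot\overline{\rho}$ from $\omega^{12}\wedge\omega^{\overline{1}\overline{2}}$;
\item $|B|^2$ and $|C|^2$ from the $\omega^{1\overline{2}}$ and $\omega^{2\overline{1}}$ terms;
\item a cross term $A\overline{D}+\overline{A}D$ from $\omega^{1\overline{1}}\wedge\omega^{2\overline{2}}$, weighted by $(1-\epsilon)^2$.
\end{itemize}
Fixing the signs should give $c_{33}=\pm\big(\rho^2+|B|^2+|C|^2-2\Re(A\overline{D})\big)$ when $\epsilon=0$, which is expression \eqref{no-square}.

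For $\omega^{2\overline{3}}$, $\omega^{3\overline{2}}$, $\omega^{1\overline{3}}$ and $\omega^{3\overline{1}}$, the surviving terms pair $d\omega^2=\epsilon\omega^{1\overline{1}}$ (or its conjugate) with a component of $d\overline{\omega^3}$ (or $d\omega^3$). This gives coefficients of the form $\epsilon\bar\rho$, $\epsilon\overline{B}$, $\epsilon\overline{C}$ and their conjugates, which all vanish when $\epsilon=0$.

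\textbf{Step 3: the two cases.}
\begin{itemize}
\item If $\epsilon=0$, only $c_{33}$ can be nonzero, so there is no square iff \eqref{no-square} holds.
\item If $\epsilon=1$, then $A$ and $D$ do not appear and $c_{33}=\rho^2+|B|^2+|C|^2$. This is a sum of nonnegative terms, so vanishing of the map forces $\rho=B=C=0$, that is $d\omega^3=0$. Conversely, if $d\omega^3=0$ then all $c_{ij}$ vanish.
\end{itemize}

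The main obstacle is the sign bookkeeping in $c_{33}$, especially confirming that the cross term comes out as $-2\Re(A\overline{D})$ while $|B|^2$ and $|C|^2$ come with $+$ signs. I will check this by writing $\partial\overline{\partial}\omega^{3\overline{3}}=-\overline{\partial}\omega^3\wedge\partial\overline{\omega^3}-\partial\omega^3\wedge\overline{\partial}\,\overline{\omega^3}$ (up to a global sign) and reordering each wedge product into $\omega^{12\overline{1}\overline{2}}$ explicitly.
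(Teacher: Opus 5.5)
Your proposal is correct and follows the paper's route. The paper's proof consists only of computing $\overline{\partial}\partial\omega^{3\overline{3}}$ for $\epsilon=1$ and $\epsilon=0$; your identification of the number of squares with the rank of $\partial\overline{\partial}:\Lambda^{1,1}\mathfrak{g}^*\to\Lambda^{2,2}\mathfrak{g}^*$, whose image lies in $\mathbb{C}\,\omega^{12\overline{1}\overline{2}}$, supplies what the paper leaves implicit, namely why there is at most one square and why $\omega^{3\overline{3}}$ alone decides. Two minor points: the mixed coefficients $c_{13},c_{31},c_{23},c_{32}$ in fact vanish identically rather than being multiples of $\epsilon\overline{\rho},\epsilon\overline{B},\epsilon\overline{C}$, which is harmless for your argument; and the cross term does come out as $-2\Re(A\overline{D})$, as in the statement, whereas the paper's proof misprints it as $-\Re(A\overline{D})$.
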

\begin{proof}
Conisder a basis $\{ \omega^1, \omega^2, \omega^3 \}$ such as in~\ref{eq_struct_6}. If $\epsilon = 1$, then a direct computation yields
\[
\overline{\partial}\partial \omega^{3\overline{3}}
=
(\rho^2 + |B|^2 + |C|^2) \omega^{12\overline{12}},
\]
which vanishes if and only if $d\omega^3 = 0$. If $\epsilon = 0$, then we have
\[
\overline{\partial}\partial \omega^{3\overline{3}}
=
(\rho^2 + |B|^2 + |C|^2 - \Re(A\overline{D})) \omega^{12\overline{12}}.
\]
\end{proof}
From this we deduce the
\begin{Corollary}
If the complex $\Lambda^{\bullet, \bullet}\mathfrak{g}^*$ has a square in its decomposition, then it has two dots in bidegree $(0, 1)$.
\end{Corollary}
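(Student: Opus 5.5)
The plan is to read off, from the previous Lemma, exactly when a square is present. Then I will show that in that case $\overline{\partial}$ kills all of ${\mathfrak{g}^*}^{0,1}$ while $\partial$ is injective on it. This gives $h^{0,1}_{\text{BC}}$-type information, namely two dots in bidegree $(0,1)$.

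First, a square exists only if we are not in the no-square case. By the previous Lemma this forces either $\epsilon=1$ with $d\omega^3\neq 0$, or $\epsilon=0$ with $\rho^2+|B|^2+|C|^2-2\Re(A\overline{D})\neq 0$.

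Next I analyse bidegree $(0,1)$, spanned by $\overline{\omega^1},\overline{\omega^2},\overline{\omega^3}$. Conjugating the structure equations~\eqref{eq_struct_6}, we get $d\overline{\omega^1}=0$, so $\overline{\omega^1}$ is $d$-closed. Moreover $\overline{\omega^1}$ is not exact, since there are no $0$-forms to hit it and $\partial$, $\overline{\partial}$ vanish on constants. Hence $\overline{\omega^1}$ spans a dot in $(0,1)$.

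For $\overline{\omega^2}$ we have $d\overline{\omega^2}=\epsilon\,\overline{\omega^{1\overline{1}}}$, which is of type $(1,1)$. So $\overline{\partial}\,\overline{\omega^2}=0$ and $\partial\overline{\omega^2}=-\epsilon\,\omega^{1\overline{1}}$. If $\epsilon=0$, then $\overline{\omega^2}$ is closed and gives a second dot. If $\epsilon=1$, it is the start of a zigzag $(0,1)\to(1,1)$, and we must instead look for the second dot elsewhere.

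Here I would split cases. In the case $\epsilon=0$, the two dots $\overline{\omega^1},\overline{\omega^2}$ are immediate. A dot in $(0,1)$ here means a $d$-closed element not in the image of anything, and nothing maps into $(0,1)$ except from $(0,0)$, where the differentials vanish. So I count $\dim(\ker\partial\cap\ker\overline{\partial})$ on $\Lambda^{0,1}$, which equals $h_{\text{BC}}^{0,1}$ because $\operatorname{Im}\partial\overline{\partial}=0$ there.

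In the case $\epsilon=1$ with $d\omega^3\neq0$, we have $d\overline{\omega^2}\neq0$, so the closed $(0,1)$-forms are spanned by $\overline{\omega^1}$ together with any combination $a\overline{\omega^2}+b\overline{\omega^3}$ with vanishing differential. This is the main obstacle. Naively $\epsilon=1$ gives only one dot, which seems to contradict the statement. I would therefore re-examine the previous Lemma's computation: for $\epsilon=1$ with $D$ and $A$ forced to zero, check whether the square can actually exist. Alternatively, argue via the symmetry in Definition~\ref{dc-acc}: a square in $(1,1),(1,2),(2,1),(2,2)$ together with the dimension count $\dim\Lambda^{1,1}=9$ and the Euler-characteristic-type constraints from $b^1$ and the Hodge numbers.

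My first attempt would be the direct check that in the $\epsilon=1$ case $\partial\overline{\partial}\,\omega^{3\overline{3}}$ actually contains cancellations making the square impossible unless $d\omega^3=0$. In that case the corollary reduces entirely to the $\epsilon=0$ case, where the two dots are immediate.
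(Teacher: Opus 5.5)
Your $\epsilon=0$ branch is correct, and it is all that the paper's one-line ``from this we deduce'' amounts to. Nothing maps into bidegree $(0,1)$, so the dots there are exactly the $d$-closed $(0,1)$-forms, and $\overline{\omega^1},\overline{\omega^2}$ are closed. To get \emph{exactly} two dots, add that a square forces $d\omega^3\neq 0$: if $d\omega^3=0$ then $\rho=A=B=C=D=0$, the no-square quantity vanishes, and there is no square. Hence $\overline{\omega^3}$ is not closed. Your opening plan, that $\overline{\partial}$ kills $\Lambda^{0,1}$ and $\partial$ is injective on it, is not correct, but you abandon it.

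The genuine gap is the case $\epsilon=1$, and the escape route you propose does not exist. There is no cancellation in $\overline{\partial}\partial\omega^{3\overline{3}}$. For $\epsilon=1$ it equals $(\rho^2+|B|^2+|C|^2)\,\omega^{12\overline{12}}$, exactly as the preceding Lemma states, so a square is present whenever $d\omega^3\neq 0$. On the other hand, $d\overline{\omega^2}=-\omega^{1\overline{1}}$ and $d\overline{\omega^3}=\rho\,\omega^{\overline{1}\overline{2}}-\overline{B}\,\omega^{2\overline{1}}-\overline{C}\,\omega^{1\overline{2}}$ are linearly independent. So the closed $(0,1)$-forms are only $\mathbb{C}\,\overline{\omega^1}$, which gives one dot.

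A concrete instance is $d\omega^1=0$, $d\omega^2=\omega^{1\overline{1}}$, $d\omega^3=\omega^{12}$. Here $\partial\omega^{3\overline{3}}=\omega^{12\overline{3}}$ and $\overline{\partial}\,\omega^{12\overline{3}}=\omega^{12\overline{1}\overline{2}}\neq 0$, so there is a square. Meanwhile $d\overline{\omega^2}=-\omega^{1\overline{1}}$ and $d\overline{\omega^3}=\omega^{\overline{1}\overline{2}}$, so there is a single dot in $(0,1)$.

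Read literally, then, the statement fails for $\epsilon=1$, and no symmetry or dimension-count argument can rescue that case. The paper's terse deduction silently assumes that a square forces $\epsilon=0$, which its own Lemma does not give. What you can actually prove is the statement under the extra hypothesis $\epsilon=0$, and then there are exactly two dots. This matters because the paper later uses the Corollary precisely to conclude $d\omega^2=0$, so $\epsilon=1$ structures would need separate treatment there.
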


Consider the space $\overline{\mathcal{U}^{\text{form}}_3}$ of formal double complexes and the map
\[
\begin{array}{cccc}
\chi: &\overline{\mathcal{U}^{\text{form}}_3}: &\longrightarrow &\mathbb{Z}^{\mathbb{N}} \\
& K^{\bullet, \bullet} &\mapsto &(b^{\bullet}, h_{\overline{\partial}}^{\bullet, \bullet}, h_{\text{BC}}^{\bullet, \bullet})
\end{array},
\]
which send a double complex on the array of its Betti, Hodge and Bott-Chern numbers. A computation using SAGE shows that the kernel of $\chi$ is generated by a single element:
\[
\aligned
\mathcal{T}
&=
\mathcal{T}_+ - \mathcal{T}_- \ ,
\\
\text{where} \quad
\mathcal{T}_+
=
\begin{tikzpicture}[baseline=5.7ex, inner sep=0pt, outer sep=0pt, remember picture]
\draw (0, 0) -- (2, 0) -- (2, 2) -- (0, 2) -- (0, 0);
\draw (0, .5) -- (2, .5);
\draw (0, 1) -- (2, 1);
\draw (0, 1.5) -- (2, 1.5);
\draw (.5, 0) -- (.5, 2);
\draw (1, 0) -- (1, 2);
\draw (1.5, 0) -- (1.5, 2);
\draw (.25, 1.25) -- (.25, .75) -- (.75, .75) -- (.75, .25) -- (1.25, .25);
\draw (.75, 1.75) -- (1.25, 1.75) -- (1.25, 1.25) -- (1.75, 1.25) -- (1.75, .75);
\filldraw (.85,.85) circle (1pt);
\filldraw (1.15,.85) circle (1pt);
\filldraw (.85,1.15) circle (1pt);
\filldraw (1.15,1.15) circle (1pt);
\end{tikzpicture}
& \ \ , \quad \quad
\mathcal{T}_-
=
\begin{tikzpicture}[baseline=5.7ex, inner sep=0pt, outer sep=0pt, remember picture]
\draw (0, 0) -- (2, 0) -- (2, 2) -- (0, 2) -- (0, 0);
\draw (0, .5) -- (2, .5);
\draw (0, 1) -- (2, 1);
\draw (0, 1.5) -- (2, 1.5);
\draw (.5, 0) -- (.5, 2);
\draw (1, 0) -- (1, 2);
\draw (1.5, 0) -- (1.5, 2);
\draw (.15, 1.35) -- (.15, .85) -- (.65, .85);
\draw (.85, .65) -- (.85, .15) -- (1.35, .15);
\draw (.65, 1.85) -- (1.15, 1.85) -- (1.15, 1.35);
\draw (1.35, 1.15) -- (1.85, 1.15) -- (1.85, .65);
\draw (.8, 1.1) -- (.8, .8) -- (1.1, .8);
\draw (.9, 1.2) -- (1.2, 1.2) -- (1.2, .9);
\end{tikzpicture}.
\endaligned
\]
Therefore, if $\mathfrak{g}$ and $\mathfrak{g}'$ are two $6$ dimensional nilpotent Lie algebras with nilpotent complex structure $J$ and $J'$, such that $\Lambda^{\bullet, \bullet}\mathfrak{g}$ and $\Lambda^{\bullet, \bullet}\mathfrak{g}'$ are non isomorphic but share the same Betti, Hodge and Bott-Chern numbers, then $\mathcal{T}_+$ is a subcomplex of either $\Lambda^{\bullet, \bullet}\mathfrak{g}$ or $\Lambda^{\bullet, \bullet}\mathfrak{g}'$, and $\mathcal{T}_-$ is a subcomplex of the other double complex.

\begin{Example}
\label{structures_egales}
For the following structure equations :
\[
\aligned
d\omega^1&=0,\\
d\omega^2&=0,\\
d\omega^3&=\omega^{12} + \omega^{1\overline{1}} + D\omega^{2\overline{2}}, 
\endaligned
\]
we obtain the following double complexes, depending on the value of $D$:
\[
\aligned
&D \in \mathbb{R} \ \backslash \ \{-1, 0, 1/2\} \quad \quad & &D \in \Big\{\frac{1}{2} + iy\Big\}_{0<y < \sqrt{\frac{3}{4}}} \\
&
\begin{tikzpicture}[baseline=5.8ex, inner sep=0pt, outer sep=0pt, remember picture]
\draw (0, 0) -- (2, 0) -- (2, 2) -- (0, 2) -- (0, 0);
\draw (0, .5) -- (2, .5);
\draw (0, 1) -- (2, 1);
\draw (0, 1.5) -- (2, 1.5);
\draw (.5, 0) -- (.5, 2);
\draw (1, 0) -- (1, 2);
\draw (1.5, 0) -- (1.5, 2);
\draw (0.25, 1.15) -- (0.25, .6) -- (.6, .6) -- (.6, .25) -- (1.15, .25);
\draw (1.75, .85) -- (1.75, 1.4) -- (1.4, 1.4) -- (1.4, 1.75) -- (.85, 1.75);
\filldraw (.1, .6) circle (1pt);
\filldraw (.1, .7) circle (1pt);
\filldraw (1.9, 1.4) circle (1pt);
\filldraw (1.9, 1.3) circle (1pt);
\filldraw (.6, .1) circle (1pt);
\filldraw (.7, .1) circle (1pt);
\filldraw (1.4, 1.9) circle (1pt);
\filldraw (1.3, 1.9) circle (1pt);
\filldraw (.65, .65) circle (1pt);
\filldraw (.7, .75) circle (1pt);
\filldraw (.75, .65) circle (1pt);
\filldraw (1.35, 1.35) circle (1pt);
\filldraw (1.3, 1.25) circle (1pt);
\filldraw (1.25, 1.35) circle (1pt);
\draw (.35, 1.1) -- (.65, 1.1) -- (.65, .9);
\draw (.35, 1.2) -- (.75, 1.2) -- (.75, .9);
\draw (1.1, .35) -- (1.1, .65) -- (.9, .65);
\draw (1.2, .35) -- (1.2, .75) -- (.9, .75);
\draw (.9, 1.65) -- (.9, 1.35) -- (1.1, 1.35);
\draw (.8, 1.65) -- (.8, 1.25) -- (1.1, 1.25);
\draw (1.65, .9) -- (1.35, .9) -- (1.35, 1.1);
\draw (1.65, .8) -- (1.25, .8) -- (1.25, 1.1);
\filldraw (.6, 1.4) circle (1pt);
\filldraw (.6, 1.3) circle (1pt);
\filldraw (.7, 1.4) circle (1pt);
\filldraw (.7, 1.3) circle (1pt);
\filldraw (1.4, .6) circle (1pt);
\filldraw (1.4, .7) circle (1pt);
\filldraw (1.3, .6) circle (1pt);
\filldraw (1.3, .7) circle (1pt);
\draw (1, 1) circle (4pt);
\filldraw (.2, .2) circle (1pt);
\filldraw (1.8, .2) circle (1pt);
\filldraw (.2, 1.8) circle (1pt);
\filldraw (1.8, 1.8) circle (1pt);
\end{tikzpicture}
& &
\begin{tikzpicture}[baseline=5.8ex, inner sep=0pt, outer sep=0pt, remember picture]
\draw (0, 0) -- (2, 0) -- (2, 2) -- (0, 2) -- (0, 0);
\draw (0, .5) -- (2, .5);
\draw (0, 1) -- (2, 1);
\draw (0, 1.5) -- (2, 1.5);
\draw (.5, 0) -- (.5, 2);
\draw (1, 0) -- (1, 2);
\draw (1.5, 0) -- (1.5, 2);
\draw (.35, 1.1) -- (.65, 1.1) -- (.65, .9);
\draw (.35, 1.2) -- (.75, 1.2) -- (.75, .9);
\draw (1.1, .35) -- (1.1, .65) -- (.9, .65);
\draw (1.2, .35) -- (1.2, .75) -- (.9, .75);
\draw (.9, 1.65) -- (.9, 1.35) -- (1.1, 1.35);
\draw (.8, 1.65) -- (.8, 1.25) -- (1.1, 1.25);
\draw (1.65, .9) -- (1.35, .9) -- (1.35, 1.1);
\draw (1.65, .8) -- (1.25, .8) -- (1.25, 1.1);
\filldraw (.2, .2) circle (1pt);
\filldraw (1.8, .2) circle (1pt);
\filldraw (.2, 1.8) circle (1pt);
\filldraw (1.8, 1.8) circle (1pt);
\filldraw (.1, .6) circle (1pt);
\filldraw (.1, .7) circle (1pt);
\filldraw (1.9, 1.4) circle (1pt);
\filldraw (1.9, 1.3) circle (1pt);
\filldraw (.6, .1) circle (1pt);
\filldraw (.7, .1) circle (1pt);
\filldraw (1.4, 1.9) circle (1pt);
\filldraw (1.3, 1.9) circle (1pt);
\draw (0.25, 1.15) -- (0.25, .75) -- (.6, .75);
\draw (1.75, .85) -- (1.75, 1.25) -- (1.4, 1.25);
\draw (.75, .6) -- (.75, .25) -- (1.15, .25);
\draw (1.25, 1.4) -- (1.25, 1.75) -- (.85, 1.75);
\filldraw (.6, 1.4) circle (1pt);
\filldraw (.6, 1.3) circle (1pt);
\filldraw (.7, 1.35) circle (1pt);
\filldraw (1.4, .6) circle (1pt);
\filldraw (1.4, .7) circle (1pt);
\filldraw (1.3, .65) circle (1pt);
\draw (.85, 1.1) -- (.85, .85) -- (1.1, .85);
\draw (.9, 1.15) -- (1.15, 1.15) -- (1.15, .9); 
\filldraw (.65, .65) circle (1pt);
\filldraw (.75, .75) circle (1pt);
\filldraw (1.35, 1.35) circle (1pt);
\filldraw (1.25, 1.25) circle (1pt);
\end{tikzpicture}
\endaligned
\]
\end{Example}
They both have the same Betti, Hodge and Bott-Chern numbers. 
The underlying Lie algebra of the left complex structure is $\mathfrak{h}_2$ if $D < -\frac{1}{4}$, 
$\mathfrak{h}_4$ if $D = -\frac{1}{4}$
\footnote{The complex structures for $\mathfrak{h}_2$ and $\mathfrak{h}_4$ displayed here are missing in the classification of~\cite{COUV}. A complete classification is to appear in an Erratum.}
and $\mathfrak{h}_5$ if $D > -\frac{1}{4}$.
The underlying Lie algebra of the right complex structure is $\mathfrak{h}_5$. Isomorphic double complexes are obtained
for the Lie algebra $\mathfrak{h}_2$ with a non-abelian nilpotent complex structure associated to $D$ with $\Re (D) = 1$,
and for the Lie algebra $\mathfrak{h}_4$ with the non-abelian nilpotent complex structure associated to $D=1$ (see Table~\cite{COUV}).

In fact, in dimension $6$, these are the only cases of nilpotent complex structures having non isomorphic double complexes, yet having the same Betti, Hodge and Bott-Chern numbers.
\begin{Theorem}
If two $6$-dimensional nilpotent Lie algebras $(\mathfrak{g}, J)$ and $(\mathfrak{g}', J')$ endowed with nilpotent complex structures have the same Betti, Hodge and Bott-Chern numbers but non-isomorphic double complexes, then the Lie algebras are isomorphic to $\mathfrak{h}_2$, $\mathfrak{h}_4$ or $\mathfrak{h}_5$, with complex structures given in Example~\ref{structures_egales}.
\end{Theorem}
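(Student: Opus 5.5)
The plan is to use the computation preceding the statement: the kernel of $\chi:\overline{\mathcal{U}^{\text{form}}_3}\to\mathbb{Z}^{\mathbb{N}}$, recording Betti, Hodge and Bott-Chern numbers, is generated by the single element $\mathcal{T}=\mathcal{T}_+-\mathcal{T}_-$. Suppose $(\mathfrak{g},J)$ and $(\mathfrak{g}',J')$ have the same Betti, Hodge and Bott-Chern numbers but non-isomorphic double complexes. Their classes in $\overline{\mathcal{U}^{\text{form}}_3}$ then differ by $m\mathcal{T}$ with $m\neq 0$. By Console--Fino and the remark after it, zigzag multiplicities are computed on $\Lambda^{\bullet,\bullet}\mathfrak{g}^*$, so it suffices to work with Chevalley--Eilenberg complexes. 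Since the total dimension of $\Lambda^{\bullet,\bullet}$ is fixed (it is $\binom{3}{p}\binom{3}{q}$ in each bidegree), the difference in multiplicities must be absorbed by squares. Each complex has at most one square, located in bidegrees $(1,1),(1,2),(2,1),(2,2)$ by the two Lemmas above.

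First I compare dimensions bidegreewise. $\mathcal{T}_+$ has four dots in the middle four bidegrees, while $\mathcal{T}_-$ has two length-$3$ zigzags meeting the middle. Counting components in bidegree $(1,1)$ should force $|m|=1$, and should show that the complex containing $\mathcal{T}_-$ (with fewer middle components) carries exactly one more square than the other. Hence one structure has a square and the other has none. By the Lemma on squares, the square-free structure satisfies either $\epsilon=1$, $d\omega^3=0$, or $\epsilon=0$ together with \eqref{no-square}. The Corollary forces two dots in bidegree $(0,1)$ for the squared one; matching Hodge numbers then forces $h^{0,1}=2$ on both sides, i.e.\ $\dim\ker\overline{\partial}$ on ${\mathfrak{g}^*}^{0,1}$ equals $2$.

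Next I run through the normal forms \eqref{eq_struct_6} and impose the necessary conditions: $h^{0,1}=2$, presence of $\mathcal{T}_+$ or $\mathcal{T}_-$ as a summand, and the square condition. The case $\epsilon=1$ should be excluded quickly, because $d\omega^2\neq0$ with $d\omega^3=0$ or with a square changes $h^{0,1}$ or the $(1,0)$ dots incompatibly. In the case $\epsilon=0$ one needs $d\omega^3$ to have a nonzero $\overline{\partial}$-part, so that $\overline{\omega^3}$ is not $\overline{\partial}$-closed. The presence of the zigzag $(0,2)\to(1,2)\leftarrow(1,1)\to(2,1)\ldots$ distinguishing $\mathcal{T}_\pm$ then depends on whether the $(2,0)$-part $\rho\omega^{12}$ is nonzero and on the rank of the $(1,1)$-part $A\omega^{1\overline1}+B\omega^{1\overline2}+C\omega^{2\overline1}+D\omega^{2\overline2}$. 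I expect this to reduce, up to the equivalences of \cite{COUV}, to $\rho=1$, $B=C=0$, $A=1$, which is exactly the family of Example~\ref{structures_egales}. There, \eqref{no-square} reads $1-2\Re D=0$, and the square-free members are $D=\tfrac12+iy$. Finally I read off the underlying Lie algebras ($\mathfrak{h}_2,\mathfrak{h}_4,\mathfrak{h}_5$ according to the sign of $D+\tfrac14$, plus the $\Re D=1$ and $D=1$ structures cited from the table) to conclude.

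The main obstacle is the case analysis in this last step. One must determine exactly which $(A,B,C,D,\rho)$ contain $\mathcal{T}_+$ versus $\mathcal{T}_-$ in bidegrees $(1,1)$--$(2,2)$, which requires computing ranks of $\partial,\overline{\partial}$ on $\Lambda^{1,1}$ and $\Lambda^{2,1}$. My first attempt is to use the classification table of \cite{COUV} family by family, together with the known Bott-Chern numbers from \cite{Ang2}. I would first filter by equal $(b,h_{\overline\partial},h_{\text{BC}})$ and only then compute the double complex of the few surviving pairs.
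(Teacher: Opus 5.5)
Your skeleton matches the paper's: $\ker\chi$ is spanned by $\mathcal{T}$, a bidegree-$(1,1)$ count follows, then the dots in degree one, then the normal forms. However, two of the steps you do write down are wrong.

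The dimension count runs the other way. $\mathcal{T}_+$ (two length-$5$ zigzags and four dots, total dimension $14$) has $2$ components in bidegree $(1,1)$. $\mathcal{T}_-$ (six length-$3$ zigzags, total dimension $18$) has $3$. Write the two complexes as $K\oplus\delta S_{1,1}$ and $K'\oplus\delta' S_{1,1}$ with $[K]-[K']=a\mathcal{T}$. Comparing $(1,1)$-dimensions gives $\delta-\delta'=a$. So the complex containing $\mathcal{T}_+$ carries the square, as in the left picture of Example~\ref{structures_egales}. Your stated direction also contradicts your own final answer $D=\tfrac12+iy$, which is square-free and contains $\mathcal{T}_-$.

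Next, two dots in $(0,1)$ do not give $h^{0,1}_{\overline{\partial}}=2$. In the normal form $\overline{\partial}\,\overline{\omega^1}=\overline{\partial}\,\overline{\omega^2}=0$ and $\overline{\partial}\,\overline{\omega^3}=\rho\,\overline{\omega^{12}}$, so $h^{0,1}_{\overline{\partial}}=3-\rho$, independent of $\epsilon$ and of the square. The correct bookkeeping is as follows.
\begin{itemize}
\item $\rho=1$ on both sides, because $\mathcal{T}_+$ and $\mathcal{T}_-$ each contain a zigzag with a nonzero arrow $\Lambda^{1,0}\to\Lambda^{2,0}$, while $\partial\Lambda^{1,0}=\mathbb{C}\,\rho\,\omega^{12}$.
\item Then the number of dots in $(1,0)$, i.e.\ $h^{1,0}_{\text{BC}}$, equals $2-\epsilon$, so both structures have the same $\epsilon$.
\item $\epsilon=1$ is impossible, since with $\rho=1$ it forces a square on both sides; hence $d\omega^2=0$ for both structures.
\end{itemize}

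The decisive step is deciding which normal forms carry $\mathcal{T}_+\oplus S_{1,1}$ and which carry $\mathcal{T}_-$. You only anticipate it, and that is where the content of the theorem lies. In the paper it is driven by the summand $Z^{2,2}_2$ of $\mathcal{T}_+$, the length-$5$ zigzag on $(0,2),(0,1),(1,1),(1,0),(2,0)$. The zigzag through $(0,2)\to(1,2)$ that you single out lies in neither $\mathcal{T}_\pm$. Requiring $Z^{2,2}_2$ and $S_{1,1}$ as summands forces $B=0$ and $D\in\mathbb{R}\setminus\{\tfrac12\}$. The cases $D=-1,0$ are then discarded by their explicit decompositions (Appendix~\ref{h5app}) compared against the square-free side. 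Your plan never derives that $D$ is real on the squared side, although reading off the Lie algebra from the sign of $D+\tfrac14$ presupposes it.

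On the square-free side, your guess $B=C=0$, $A=1$, $\Re D=\tfrac12$ is in fact correct up to equivalence. With $\rho=1$, \eqref{no-square} makes the Hermitian part of $e^{i\phi}M$ positive definite for a suitable $\phi$, where $M$ is the matrix of $(1,1)$-coefficients. So a change of $\omega^1,\omega^2$ and a rescaling of $\omega^3$ diagonalise it. You must still discard $D=\tfrac12$ and handle non-real $D$:
\begin{itemize}
\item The Pfaffian of $\Re(z\,d\omega^3)$ is a real quadratic form in $z$ whose discriminant is proportional to $4(\Im D)^2-1-4\Re D$.
\item So $D=\tfrac12+iy$ lies on $\mathfrak{h}_5$, $\mathfrak{h}_4$, $\mathfrak{h}_2$ according as $4y^2<3$, $=3$, $>3$.
\item The last two cases are exactly the paper's $B=1$ structures with $D=1$ and $\Re D=1$. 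They are not extra cases to be appended from the table.
\end{itemize}
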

\begin{proof}
Denote the associated double complexes by $\Lambda^{\bullet, \bullet}\mathfrak{g}^*$ and $\Lambda^{\bullet, \bullet}\mathfrak{g'}^*$.
If $S_{1, 1}$ denotes the double complex consisting in only one central square, then we can write
\[
\aligned
\Lambda^{\bullet, \bullet}\mathfrak{g}^* = K \oplus \delta S_{1, 1}, \\
\Lambda^{\bullet, \bullet}\mathfrak{g'}^* = K' \oplus \delta ' S_{1, 1}.
\endaligned
\]

In $\overline{\mathcal{U}^{\text{form}}_3}$, there is an integer $a \neq 0$ such that
\[
[K] 
-
[K']
=
a(\mathcal{T}_+ - \mathcal{T}_-).
\]
We deduce that a$\mathcal{T}_+$ is a subcomplex and a direct summand of $K$, and that $a\mathcal{T}_-$ is a direct summand of $K'$. Furthermore, the dimension of the component of bidegree $(1, 1)$ in $K$ is equal to $9 - \delta$, and the component of bidegree $(1, 1)$ in $K'$ has dimension $9 - \delta'$. We deduce that
\[
\delta' - \delta
=
-a.
\]
Clearly, we have $a = 1$ or $a= -1$ since $\delta, \delta' \in \{0, 1\}$. We may suppose that $a = 1$ and that $\delta = 1$ and $\delta' = 0$.

Now, since $\Lambda^{\bullet, \bullet}\mathfrak{g'}^*$ has a square in its decomposition, we know that $\Lambda^{\bullet, \bullet}\mathfrak{g'}^*$ has two dots in bidegree $(1, 0)$. Since this must also be the case for $\Lambda^{\bullet, \bullet}\mathfrak{g}^*$, we deduce that $d\omega^2=0$ also vanishes in its structure equations. 

$Z^{2, 2}_2$ is a direct summand in $\Lambda^{\bullet, \bullet}\mathfrak{g}^*$, therefore $\mathfrak{g}$ has a non-abelian nilpotent complex structure (corresponding to the right column, page $14$, in~\cite{COUV}). Since we also have $d\omega^2=0$, $\mathfrak{g}$ is isomorphic to $\mathfrak{h}_2$, $\mathfrak{h}_4$, $\mathfrak{h}_5$ or $\mathfrak{h}_6$, and we must have $B=0$, otherwise $Z^{2, 2}_2$ can not be a direct summand in their double complexes. We are left with the Lie algebras $\mathfrak{h}_2, \mathfrak{h}_4$ and $\mathfrak{h}_5$ where we are forced to take $\lambda = 0$ and $D \in \mathbb{R} \backslash \{1/2\}$ to have $Z^{2, 2}_2$ and $S_{1, 1}$ as direct summands. If $D \neq -1, 0$, we obtain the complex structure displayed on the left in~\ref{structures_egales}. For $D=-1, 0$, we show the double complex in~\ref{h5app}. 

Furthermore, amongst the Lie algebras $\mathfrak{h}_2$, $\mathfrak{h}_4$, $\mathfrak{h}_5$ and $\mathfrak{h}_6$, the only ones having a non-abelian nilpotent complex structure satisfying~\ref{no-square} are 
\begin{itemize}
\item $\mathfrak{h}_2$, with $\Re (D) = 1$
\item $\mathfrak{h}_4$, with $D=1$.
\item $\mathfrak{h}_5$, with $\lambda = 0$ and $\Re (D) = \frac{1}{2}$.
\end{itemize}
All of them have the complex structure displayed on the right in~\ref{structures_egales}.
\end{proof}

%%%%%%%%%%%%%%%%%%%%%%%%%%%%%%%%%%%%%%%%
\subsection{Almost abelian Lie algebras}
In this section we will consider {\sl almost abelian} Lie algebras.
\begin{Definition}
A Lie algebra $\mathfrak{g}$ is said to be {\sl almost abelian} if it has an abelian subalgebra $\mathfrak{a}$ of codimension $1$.
\end{Definition}
Consider such a Lie algebra $\mathfrak{g}$ of even dimension $2n+2$, and denote by $\mathfrak{a}$ an abelian subalgebra of codimension $1$. Choose a nonzero vector $e_0 \in \mathfrak{g} \backslash \mathfrak{a}$. As {\bf vector spaces}, $\mathfrak{g} \simeq \mathbb{R}e_0 \times \mathfrak{a}$. 

The space $\mathbb{R}e_0$ acts on $\mathfrak{a}$ with the Lie bracket:
\[
\lambda e_0 \cdot v
=
[\lambda e_0, v].
\]
This action is linear and can therefore be represented as a matrix $A$. Conversely, for any matrix $A$, one can define the Lie algebra $\mathbb{R}e_0 \ltimes_A \mathfrak{a}$ where the Lie bracket is given by
\[
[e_0, v] = Av,
\]
for all $v \in \mathfrak{a}$. 

The Lie algebra $\mathbb{R}e_0 \ltimes_A \mathfrak{a}$ is nilpotent if and only if $A$ is nilpotent, and two Lie algebras $\mathbb{R}e_0 \ltimes_A \mathfrak{a}$ and $\mathbb{R}e_0 \ltimes_A' \mathfrak{a}$ are isomorphic if and only if $A$ and $A'$ are conjugate.

Furthermore, as $A$ is nilpotent, each of its conjugacy classes have a representant given by a normal Jordan form. The existence of an almost complex structure on an almost abelian nilmanifold therefore only depends on the normal Jordan form of $A$. The following criterion is proven in~\cite{ABDGH}:
\begin{Theorem}
Suppose that the normal Jordan form of $A$ is given by the partition
\[
2n+1
=
m_1 * 1 + m_2 * 2 + m_3 * 3 + \cdots + m_k*k.
\]
Then the Lie algebra $\mathbb{R}e_0 \ltimes_A \mathfrak{a}$ admits an almost complex structure if and only if there is a partition of $n$:
\[
n = q_1 * 1 + q_2 * 2 + \cdots + q_k*k,
\]
and an integer $1 \leqslant j \leqslant k-1$, such that:
\[
\Bigg\{
\begin{array}{ccc}
m_{j-1} &= 2q_{j-1}-1 &\\
m_j &= 2q_j + 1 &\\
m_i &= 2q_i &\text{ for } i \neq j, j-1
\end{array}.
\]
\end{Theorem}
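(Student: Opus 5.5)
The plan is to reduce the existence question to linear algebra on $\mathfrak{a}$ and then read off Jordan types. Although the statement says ``almost complex structure'', the meaningful condition (and the one in~\cite{ABDGH}) is the existence of an \emph{integrable} one, so I will prove the statement for complex structures.

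\textbf{Step 1: normal form for complex structures.} Let $J$ be a complex structure on $\mathfrak{g}=\mathbb{R}e_0\ltimes_A\mathfrak{a}$. Set $\mathfrak{u}=\mathfrak{a}\cap J\mathfrak{a}$. This is a $J$-invariant subspace of dimension $2n$, and after rescaling we may take $e_1:=Je_0\in\mathfrak{a}$, so that $\mathfrak{a}=\mathbb{R}e_1\oplus\mathfrak{u}$.

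I will evaluate the Newlander--Nirenberg condition from the previous subsection on three kinds of pairs: $X,Y\in\mathfrak{u}$, then $X=e_0$ with $Y\in\mathfrak{u}$, then $X=e_0$ with $Y=e_1$. Using that $\mathfrak{a}$ is abelian, I expect this to give two facts:
\begin{itemize}
\item $A\mathfrak{u}\subset\mathfrak{u}$ and $B:=A|_{\mathfrak{u}}$ commutes with $J|_{\mathfrak{u}}$;
\item $Ae_1=ce_1+v$ for some $c\in\mathbb{R}$ and $v\in\mathfrak{u}$.
\end{itemize}
Nilpotency of $A$ forces $c=0$. So in the basis $(e_1,\mathfrak{u})$ the matrix of $A$ has zero first row and first column $v$, with $B$ a nilpotent $\mathbb{C}$-linear endomorphism of $\mathfrak{u}\simeq\mathbb{C}^n$.

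Conversely, any such data $(B,v)$ with $B$ complex-linear defines an integrable $J$ (take $Je_0=e_1$ and the given $J$ on $\mathfrak{u}$), by the same computation read backwards. Hence the question becomes: which real Jordan types arise from matrices
\[
A=\begin{pmatrix}0&0\\ v&B\end{pmatrix},\qquad B\ \mathbb{C}\text{-linear nilpotent on }\mathbb{C}^n?
\]

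\textbf{Step 2: Jordan type of $B$.} Let $B$ have complex Jordan blocks with $q_i$ blocks of size $i$, so that $n=\sum_i iq_i$. Viewed as a real endomorphism, $B$ has exactly $2q_i$ blocks of size $i$.

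\textbf{Step 3: effect of the vector $v$.} I will compute the real Jordan type of $A$ through the dimensions $\dim\ker A^k$, using $A^k(te_1+u)=tB^{k-1}v+B^ku$. Define the height of $v$ by $j-1:=\min\{s\ge 0: v\in\operatorname{Im}B^s\ \text{fails}\}$, with the convention $j=1$ if $v=0$. More precisely, I will use the commutant of $B$ to normalize $v$ to be the \emph{bottom} of a single real Jordan chain of length $j-1$. If $v\in\operatorname{Im}B$, I will first subtract $Bw$ by changing $e_1$ to $e_1-w$, which reduces to the case $v\notin\operatorname{Im}B$ or $v=0$.

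After this normalization, $e_1\mapsto v\mapsto\cdots$ prolongs one real block of size $j-1$ to a block of size $j$. This yields $m_{j-1}=2q_{j-1}-1$, $m_j=2q_j+1$, and $m_i=2q_i$ otherwise. The case $v=0$ gives the extra size-one block, which is the case $j=1$.

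\textbf{Converse and main obstacle.} For the converse, given $(q_i)$ and $j$, I will choose $B$ with $q_i$ complex blocks of size $i$ and $v$ the real basis vector at the start of one size-$(j-1)$ chain. Step~3 then shows that $A$ has the prescribed type, and since Lie algebras $\mathbb{R}e_0\ltimes_A\mathfrak{a}$ depend only on the conjugacy class of $A$, this gives the required complex structure.

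I expect the main obstacle to be the normalization in Step~3. One has to show that, up to conjugation commuting with $J$ and the change $e_1\mapsto e_1-w$, only the ``depth'' of $v$ matters. My first attempt is to pick $w$ with $v-Bw$ of maximal depth and then compare $\dim\ker A^k$ with $1+\dim\ker B^k$ minus the appropriate correction. The alternative is to check directly that the resulting partition differs from that of $B$ exactly in the single-block way described above.
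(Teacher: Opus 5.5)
The paper does not prove this statement; it quotes it from \cite{ABDGH}, so your argument has to stand on its own. Its architecture is right. You are correct that the statement only makes sense for integrable $J$, since every even-dimensional Lie algebra carries some $J$ with $J^2=-1$. Steps 1--2 and the converse are sound, and everything reduces to one claim: adjoining $e_1$ with $Ae_1=v$ lengthens exactly one real Jordan block of $B$ by one. But that claim is the heart of the theorem, and it has a genuine gap. You leave it open as the ``main obstacle'', and the invariant you propose for it is wrong.

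Your height $j-1=\min\{s\ge 0: v\notin\operatorname{Im}B^s\}$ equals $1$ for every $v\notin\operatorname{Im}B$, regardless of the chain $v$ generates. Take $B$ a single complex block of size $2$ and $v=x$ a generator. Your formula gives $j=2$, which asks you to lengthen a real block of size $1$ that $B$ does not have; in fact $e_1\mapsto x\mapsto Bx\mapsto 0$ shows $j=3$. For $0\neq v\in\operatorname{Im}B$ it gives $j\ge 3$, although your own substitution $e_1\mapsto e_1-w$ yields $v=0$, i.e.\ $j=1$. The right invariant, read off from your formula $A^k(te_1+u)=tB^{k-1}v+B^ku$, is
\[
j=\min\{k\ge 1:\ B^{k-1}v\in\operatorname{Im}B^k\}.
\]
No normalization through the commutant is needed. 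Since $\mathfrak u$ is $A$-stable of codimension one with $A|_{\mathfrak u}=B$, we have $\ker B^k=\ker A^k\cap\mathfrak u$. Hence $\delta_k:=\dim\ker A^k-\dim\ker B^k\in\{0,1\}$. Because $\ker A^k\subset\ker A^{k+1}$, $\delta$ is nondecreasing, with $\delta_0=0$ and $\delta_k=1$ for large $k$, and it jumps exactly at this $j$. For any nilpotent $T$, the number of Jordan blocks of size $\ge k$ is $\dim\ker T^k-\dim\ker T^{k-1}$. So $A$ has as many blocks of size $\ge k$ as $B$ for $k\neq j$, and one more for $k=j$. That is, one part $j-1$ of the real partition of $B$ becomes $j$ (a new part $1$ if $j=1$). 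Combined with Step 2, this gives exactly $m_{j-1}=2q_{j-1}-1$, $m_j=2q_j+1$, and $m_i=2q_i$ otherwise.

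Smaller points in Step 1:
\begin{itemize}
\item The pairs in $\mathfrak u\times\mathfrak u$ and the pair $(e_0,e_1)$ give nothing, since $N(X,JX)=0$ identically.
\item All the information is in $N(e_0,Y)=0$ for $Y\in\mathfrak u$. This gives $JAY=AJY$, and since $AJY\in\mathfrak a$ it already forces $AY\in\mathfrak a\cap J\mathfrak a=\mathfrak u$. The decomposition $Ae_1=ce_1+v$ is automatic.
\item Getting $Je_0\in\mathfrak a$ is not a rescaling. Replace $e_0$ by $-Jx$ with $x\in\mathfrak a\setminus\mathfrak u$; because $\mathfrak a$ is abelian, this multiplies $A$ by a nonzero scalar and so keeps its Jordan type.
\item Work with the standard Nijenhuis tensor $N(X,Y)=[JX,JY]-J[JX,Y]-J[X,JY]-[X,Y]$. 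The integrability identity displayed earlier in the paper is misprinted, and taken literally it would force $A=0$.
\end{itemize}
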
 
The complex structures that exist on almost abelian Lie algebras were studied and classified in \cite{AABRW}.
Each almost abelian Lie algebra has in fact at most one complex structure, and when it exist, there is a basis 
\[
\mathcal{B}
=
\{
\alpha, \beta_1^0, \beta_2^0, \dots, \beta_{k_0}^0, \beta_1^1, \dots, \beta_{k_1}^1, \dots, \beta_{k_r}^r
\}
\]
of the $i$-eigenspace ${\mathfrak{g}^{1, 0}}^*$, satisfying
\begin{align}
\label{str_eq_ab}
& d\alpha = 0 \nonumber \\
&\Big\{
\begin{array}{cccc}
d\beta_1^0 &= &\alpha \wedge \overline{\alpha}, & \\
d\beta_i^0 &= &(\alpha + \overline{\alpha}) \wedge \beta_{i-1}^0, & \quad \text{for } i\geqslant 2.
\end{array} \\
&\Big\{
\begin{array}{cccc}
d\beta_1^j &= &0, & \\
d\beta_i^j &= &(\alpha + \overline{\alpha}) \wedge \beta_{i-1}^j, & \quad \text{for } i\geqslant 2,
\end{array}
\nonumber
\end{align}
with the last bracket being true for all $j \geqslant 1$. 

In \cite{AABRW}, an explicit computation of the Betti and Hodge numbers is given. They are given in terms of the number of irreducible components in some Lie algebra representations of $\mathfrak{sl}_2(\mathbb{C})$. We will investigate these representations in more details in the case $k_0=0$ to fully decompose the double complex associated to these almost abelian nilmanifolds.

We fix $k_0 = 0$ from now on. We will use the same notations as in \cite{AABRW}, and write $\mathfrak{b} = \mathcal{J}\mathfrak{a} \cap \mathfrak{a}$. A basis of $\mathfrak{b}^*$ is given by
\[
\{
\beta_1^1, \beta_2^1, \dots, \beta_{k_r}^r, \overline{\beta_1^1}, \dots, \overline{\beta_{k_r}^r}
\}.
\]
Now, the dual action of the nilpotent matrix $A$ on $\mathfrak{b}^*$ is 
\[
\aligned
A\beta_1^j &= 0, \\
A\beta_i^j &= \beta_{i-1}^j, \quad \text{for} \ i \geqslant 2,
\endaligned
\]
for all $j \geqslant 1$.
By the Jacobson-Morozov Theorem (\cite{Knapp}), the matrix $A$ extends to a $\mathfrak{sl}_2(\mathbb{C})$-triple $(H, A, N)$. Therefore, the eigenspaces ${\mathfrak{b}^*}^{1, 0}$ and ${\mathfrak{b}^*}^{0, 1}$ are representations of the Lie algebra $\mathfrak{sl}_2(\mathbb{C})$. For any integers $p$ and $q$, the space $\Lambda^{p, q} \mathfrak{b}^*$ of left invariant $(p, q)$-forms is also a representation of $\mathfrak{sl}_2(\mathbb{C})$,where the action of any matrix $M \in \mathfrak{sl}_2(\mathbb{C})$ is given by the Leibniz rule
\[
\aligned
M(\beta_{i_1}^{j_1} \wedge \cdots \wedge \beta_{i_p}^{j_p} \wedge \cdots \wedge \overline{\beta_{i_{p+q}}^{j_{p+q}}})
&=
\sum_{l=1}^{p} \beta_{i_1}^{j_1} \wedge \cdots \wedge M\beta_{i_l}^{j_l} \wedge \cdots \wedge \overline{\beta_{i_{p+q}}^{j_{p+q}}} \\
&+
\sum_{l=p+1}^{p+q} \beta_{i_1}^{j_1} \wedge \cdots \wedge M\overline{\beta_{i_l}^{j_l}} \wedge \cdots \wedge \overline{\beta_{i_{p+q}}^{j_{p+q}}}.
\endaligned
\]
In particular, this is true for $M = A$.
Furthermore, it is know that for each integer $n$ there is, up to isomorphism, exactly one irreducible $\mathfrak{sl}_2(\mathbb{C})$ representation $W_n$ of dimension $n$. This representation is isomorphic to the symmetric product
\[
W_n
=
Sym_n(\mathbb{C}^2),
\] 
and it is always possible to choose an element $v \in W_n$ such that the family
\[
\{
v, Av, A^2v, \cdots, A^{n-1}v
\}
\]
is a basis of $W_n$~(\cite{FH}).
Hence, $\Lambda^{p, q}\mathfrak{b}^* = \Lambda^p {\mathfrak{b}^*}^{1, 0} \otimes \Lambda^q {\mathfrak{b}^*}^{0, 1}$ can be decomposed as a sum of irreducible representations, and there exist unique integers $c_1^{p, q}, c_2^{p, q}, \dots$ such that:
\[
\Lambda^{p, q}\mathfrak{b}^*
=
c_1^{p, q} W_1
\oplus
c_2^{p, q} W_2
\oplus 
\cdots.
\]
Now, in every copy of $W_k$ appearing in this decomposition, we may choose an element $b_k^{p, q}$ such that the family
\[
\{
b_k^{p, q}, Ab_k^{p, q}, A^2b_k^{p, q}, \dots, A^{k-1}b_k^{p, q}
\}
\]
is a basis of $W_k$. Since $W_k$ appears $c_k^{p, q}$ times in the decomposition of $\Lambda^{p, q} \mathfrak{b}^*$, we have $c_k^{p, q}$ such elements, one in each copy of $W_k$, that we shall denote by
\[
b_{k, 1}^{p, q},
b_{k, 2}^{p, q},
\dots,
b_{k, c_k^{p, q}}^{p, q}.
\]
Hence, the space $\Lambda^{\bullet, \bullet} \mathfrak{g}^*$ of left-invariant forms admits the following basis:
\[
\{
A^lb_{k, s}^{p, q}
\}
\cup
\{
\alpha \wedge A^lb_{k, s}^{p, q}
\}
\cup
\{
\overline{\alpha} \wedge A^lb_{k, s}^{p, q}
\}
\cup
\{
\overline{\alpha} \wedge \alpha \wedge A^lb_{k, s}^{p, q}
\}.
\] 
Furthermore, by the definition of the differential $d = \partial + \overline{\partial}$, we see that
\[
\aligned
\partial \beta_i^j &= \alpha \wedge A\beta_i^j, \\
\overline{\partial} \beta_i^j &= \overline{\alpha} \wedge A\beta_i^j,
\endaligned
\]
for all $i, j \geqslant 0$.

Since $\partial$ and $\alpha \wedge A$ coincide on a basis of $\mathfrak{b}^*$, and since both satisfy the graded Leibniz rule, they coincide on the whole exterior algebra $\Lambda^{\bullet, \bullet}\mathfrak{b}^*$.
The same is true for $\overline{\partial}$ and $\overline{\alpha} \wedge A$.
\[
\aligned
\partial \omega &= \alpha \wedge A\omega, \\
\overline{\partial} \omega &= \overline{\alpha} \wedge A\omega.
\endaligned
\]

We will now use this explicit formula of the differentials $\partial$ and $\overline{\partial}$ in terms of $A$ to describe some subcomplexes of $\Lambda^{p, q} \mathfrak{b}^*$. Ultimately, we will see that these subcomplexes are direct summands, and therefore they describe the decomposition of the double complex into irreducible parts. 

We will consider the following odd zigzags of length $3$, squares and points:
\[
\aligned
Z_1^{p, q, k, s} = &\begin{tikzcd}[row sep=small, column sep = small]
\overline{\alpha} \wedge A^{k-1}b_{k, s}^{p, q}&  \\
A^{k-2}b_{k, s}^{p, q} \arrow[u] \arrow[r]& \alpha \wedge A^{k-1}b_{k, s}^{p, q}
\end{tikzcd}
&\text{For } 2 \leqslant k.
\\
\\
Z_2^{p, q, k, s} = &\begin{tikzcd}[row sep=small, column sep = small]
\overline{\alpha} \wedge b_{k, s}^{p-1, q-1} \arrow[r] & \overline{\alpha} \wedge \alpha \wedge Ab_{k, s}^{p-1, q-1}&  \\
& -\alpha \wedge b_{k, s}^{p-1, q-1} \arrow[u]
\end{tikzcd}.
\\
\\
Z_3^{p, q, k, s, l} = &\begin{tikzcd}[row sep=small, column sep = small]
\overline{\alpha} \wedge A^{l+1}b_{k, s}^{p, q} \arrow[r] & \overline{\alpha} \wedge \alpha \wedge A^{l+2}b_{k, s}^{p, q}&  \\
A^lb_{k, s}^{p, q} \arrow[r], \arrow[u] & \alpha \wedge A^{l+1}b_{k, s}^{p, q} \arrow[u]
\end{tikzcd}
&\text{For } 0 \leqslant l \leqslant k-3.
\\
\\
Z_4^{p, q, k, s} = &\quad A^{k-1}b_{k, s}^{p, q}.
\\
\\
Z_5^{p, q, s} = &\quad \alpha \wedge b_{1, s}^{p-1, q}.
\\
\\
Z_6^{p, q, s} = &\quad \overline{\alpha} \wedge b_{1, s}^{p, q-1}.
\\
\\
Z_7^{p, q, k, s} = &\quad \overline{\alpha} \wedge \alpha \wedge b_{k, s}^{p-1, q-1}.
\endaligned
\]
These irreducible double complexes are all subcomplexes of $\Lambda^{\bullet, \bullet}\mathfrak{g}^*$. We shall parametrize these subcomplexes by the set $I \subset \mathbb{N}^6$ of $6$-uples $(i, p, q, r, s, l)$ such that $Z_i^{p, q, r, s, l}$ is a subcomplex defined above. 
\begin{Property}
The elements of the family of subcomplexes
\[
\{
Z_i^{p, q, r, s, l}
\}_{(i, p, q, r, s, l) \in I}
\]
have zero intersection, and they sum up to the whole space:
\[
\bigoplus_{(i, p, q, r, s, l) \in I}
Z_i^{p, q, r, s, l}
=
\Lambda^{\bullet, \bullet}\mathfrak{g}^*.
\]
\end{Property}
\begin{proof}
Each element of the basis
\[
\{
A^lb_{k, s}^{p, q}
\}
\cup
\{
\alpha \wedge A^lb_{k, s}^{p, q}
\}
\cup
\{
\overline{\alpha} \wedge A^lb_{k, s}^{p, q}
\}
\cup
\{
\overline{\alpha} \wedge \alpha \wedge A^lb_{k, s}^{p, q}
\}
\]
of $\Lambda^{\bullet, \bullet}\mathfrak{g}^*$ appears exactly one time in one of the components of one of the element of the family
\[
\{
Z_i^{p, q, r, s, l}
\}_{(i, p, q, r, s, l) \in I}.
\]
\end{proof}

As a consequence of this decomposition, we can completely describe the zigzags which appears in the double complex of $\mathbb{R}e_0 \ltimes_A \mathfrak{a}$, in the case $j=1$ (equivalently $k_0=0$ in the structure equations).
\begin{Theorem}
\label{dec_ab}
The double complex associated to $\mathbb{R}e_0 \ltimes_A \mathfrak{a}$ with $k_0=j-1=0$  only contains dots, squares and odd zigzags of length $3$. Furthermore, the number of squares which have their lower left component in bidegree $(p, q)$ is equal to
\[
c_3^{p, q} + 2c_4^{p,q} + 3c_5^{p, q} + \cdots + (k-2)c_k^{p, q} + \cdots,
\]
and the number of odd zigzags which are looking up and have their lower left component in bidegree $(p, q)$ is equal to
\[
c_2^{p, q} + c_3^{p, q} + c_4^{p, q} + \cdots + c_k^{p, q} + \cdots.
\]
\end{Theorem}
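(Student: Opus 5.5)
The plan is to read off the decomposition directly from the preceding Property. It says that the subcomplexes $Z_i^{p,q,r,s,l}$, $(i,p,q,r,s,l)\in I$, are in direct sum and together give $\Lambda^{\bullet,\bullet}\mathfrak{g}^*$. So it suffices to show three things: each $Z_i$ is a genuine subcomplex, it is irreducible of the claimed shape, and its differentials are exactly the arrows drawn, with every drawn arrow nonzero. Once that is done, uniqueness of multiplicities in the decomposition (Theorem of \cite{Ste}, \cite{KQ}) lets us count the squares and zigzags summand by summand. The basic tools are the formulas $\partial\omega=\alpha\wedge A\omega$ and $\overline{\partial}\omega=\overline{\alpha}\wedge A\omega$ on $\Lambda^{\bullet,\bullet}\mathfrak{b}^*$. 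Since $d\alpha=d\overline{\alpha}=0$ and $\alpha\wedge\alpha=0$, these extend to forms containing $\alpha$ or $\overline{\alpha}$, for instance $\partial(\overline{\alpha}\wedge\omega)=-\overline{\alpha}\wedge\alpha\wedge A\omega$.

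First I would record two facts. The operator $A$ preserves the bidegree on $\Lambda^{\bullet,\bullet}\mathfrak{b}^*$, because it acts by the Leibniz rule on ${\mathfrak{b}^*}^{1,0}$ and on ${\mathfrak{b}^*}^{0,1}$ separately. In each copy of $W_k$ we have $A^{j}b_{k,s}^{p,q}\neq 0$ for $j\leqslant k-1$ and $A^kb_{k,s}^{p,q}=0$. Moreover, wedging a nonzero element of $\Lambda\mathfrak{b}^*$ with $\alpha$, $\overline{\alpha}$ or $\overline{\alpha}\wedge\alpha$ is injective, since $\alpha,\overline{\alpha}\notin\mathfrak{b}^*_{\mathbb C}$. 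With these facts I would go through the seven families:
\begin{itemize}
\item $Z_1$ ($k\geqslant2$): the two arrows out of $A^{k-2}b$ are nonzero, and both outer components are closed because $A^kb=0$. So it is an odd zigzag of length $3$, looking up, with its lower left component $A^{k-2}b$ in bidegree $(p,q)$.
\item $Z_2$: an odd zigzag of length $3$ looking down. It requires $Ab\neq0$, i.e.\ $k\geqslant2$; for $k=1$ its pieces are instead the dots $Z_5,Z_6,Z_7$.
\item $Z_3$ ($0\leqslant l\leqslant k-3$): all four arrows are nonzero, and $\partial\overline{\partial}A^lb=\pm\overline{\alpha}\wedge\alpha\wedge A^{l+2}b\neq0$ because $l+2\leqslant k-1$. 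So it is a square with lower left corner $A^lb$ in bidegree $(p,q)$. The signs ($-\alpha\wedge\cdot$, $\pm1$ on the arrows) only fix the anticommutation.
\item $Z_4,\dots,Z_7$: dots, since $A^kb=0$ and since $Ab_{1,s}=0$ for the copies of $W_1$.
\end{itemize}

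Counting then gives the statement. Each copy of $W_k$ in $\Lambda^{p,q}\mathfrak{b}^*$ contributes $k-2$ squares (the values $l=0,\dots,k-3$) with lower left in $(p,q)$, hence $\sum_k (k-2)c_k^{p,q}$ squares in total. Each copy with $k\geqslant2$ contributes exactly one looking-up zigzag $Z_1$ with lower left in $(p,q)$, hence $\sum_{k\geqslant2}c_k^{p,q}$. The remaining summands are looking-down length-$3$ zigzags or dots, so no zigzag of length $\geqslant 5$ and no even zigzag occurs.

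The main obstacle is the bookkeeping behind the Property: checking that every basis vector $A^lb$, $\alpha\wedge A^lb$, $\overline{\alpha}\wedge A^lb$, $\overline{\alpha}\wedge\alpha\wedge A^lb$ is used exactly once, including the edge cases $k=1,2$. One also has to check that the image of each arrow is exactly the next basis vector drawn, so that the summands are closed under $\partial$ and $\overline{\partial}$ and no cross terms arise. I would do this by fixing one copy of $W_k$ and tabulating, for each of the four prefixes, which power $A^l$ falls in which $Z_i$.
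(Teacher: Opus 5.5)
Your proposal is correct and follows the paper's route: the paper also reads the theorem off the preceding Property, identifying $Z_1$ as looking-up length-$3$ zigzags, $Z_2$ as looking-down ones, $Z_3$ as squares and $Z_4,\dots,Z_7$ as dots, and then counting, for each copy of $W_k$ in $\Lambda^{p,q}\mathfrak{b}^*$, the $k-2$ squares and the single looking-up zigzag with lower left corner in $(p,q)$. Your explicit checks fill in details the paper leaves implicit: that $A$ preserves bidegree, that every drawn arrow is nonzero (using $A^{k-1}b\neq 0$, $A^kb=0$ and injectivity of wedging with $\alpha,\overline{\alpha}$), and that $Z_2$ needs $k\geqslant 2$, with the $k=1$ pieces covered by $Z_5,Z_6,Z_7$.
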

As a consequence, we notice that the decomposition of such a double complex contains only squares and zigzags of length $1$ and $3$. Manifolds admitting a double complex with such a decomposition are referred to as $dd^c+3$-manifolds in~\cite{SW}.
\begin{Corollary}
Let $A$ be a nilpotent matrix, and $G$ the Lie group associated to the Lie algebra $\mathbb{R}e_0 \ltimes_A \mathfrak{a}$.
Assume that the Lie algebra $\mathbb{R}e_0 \ltimes_A \mathfrak{a}$ admits a complex structure such that $k_0=0$ in the structure equations~\ref{str_eq_ab}. 
Then, for any lattice $\Gamma$ of $G$, the compact complex manifold $G / \Gamma$ is $dd^c+3$.
\end{Corollary}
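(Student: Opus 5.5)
The plan is to pass from the Lie algebra to the manifold through the quasi-isomorphism results quoted earlier, and then read the $dd^c+3$ property off Theorem~\ref{dec_ab}.

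First, the Lie algebra $\mathfrak{g} = \mathbb{R}e_0 \ltimes_A \mathfrak{a}$ is nilpotent because $A$ is nilpotent. The complex structure with $k_0=0$ is nilpotent, since the basis $\alpha, \beta^j_i$ of \eqref{str_eq_ab} is triangular with respect to the differential: each $d\beta^j_i$ involves only $\alpha$, $\overline{\alpha}$ and $\beta^j_{i-1}$. So for a lattice $\Gamma$ of $G$, the quotient $M = G/\Gamma$ is a compact complex nilmanifold with a nilpotent complex structure. By the theorem of Console and Fino (\cite{CF}), the inclusion $\Lambda^{\bullet,\bullet}\mathfrak{g}^* \hookrightarrow A^{\bullet,\bullet}(M)$ induces an isomorphism in Dolbeault cohomology. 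This makes it an $E_1$-isomorphism, and by \cite{Ste2} the two double complexes then have the same multiplicities of every zigzag.

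Second, Theorem~\ref{dec_ab} applies to $\Lambda^{\bullet,\bullet}\mathfrak{g}^*$ and shows that its decomposition contains only dots, squares and odd zigzags of length $3$. Since zigzag multiplicities agree, the same holds for the zigzags in $A^{\bullet,\bullet}(M)$: every zigzag has length $1$ or $3$. Squares may still occur in $A^{\bullet,\bullet}(M)$ in different numbers, but the $dd^c+3$ condition of \cite{SW} restricts only the zigzags. Hence $M$ is $dd^c+3$.

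The main point needing care is the hypothesis of \cite{CF}, which covers only certain classes of complex structures on nilmanifolds (nilpotent, or rational). I would make sure that the $k_0=0$ structure equations give a nilpotent complex structure in the sense of the paper's definition. It would also be worth checking that the result does not depend on the choice of $\Gamma$; it should not, since the argument works for every lattice.
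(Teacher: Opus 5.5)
Your proposal is correct and is essentially the paper's argument. The paper reads the corollary off Theorem~\ref{dec_ab}, transferring zigzag multiplicities from $\Lambda^{\bullet,\bullet}\mathfrak{g}^*$ to the forms on $G/\Gamma$ via the Dolbeault isomorphism and the fact that $E_1$-isomorphisms preserve zigzag multiplicities, as recalled at the start of Section~\ref{lie}. Your explicit check that the $k_0=0$ structure equations define a nilpotent complex structure (ordering the basis as $\alpha,\beta^1_1,\dots,\beta^1_{k_1},\beta^2_1,\dots$), so that the Dolbeault isomorphism holds for every lattice, fills in a step the paper leaves implicit.
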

If $k_0 \neq 0$, this property no longer holds, and longer zigzags will have nonzero multiplicities.

For completeness, we shall give, for each puzzle piece in \eqref{pieces_puzzle}, their number of occurences in each bidegree $(p, q)$. Note that we are here not considering the squares.
\[
\aligned
\case{0}{0} &= 
\begin{array}{ccc}
&c_1^{p, q} + c_2^{p, q} + \cdots \\
&+c_1^{p-1, q} + c_1^{p, q-1} \\
&+c_1^{p-1, q-1} + c_2^{p-1, q-1} + \cdots ,
\end{array}
\\
\case{1}{0} &= c_2^{p-1, q-1} + c_3^{p-1, q-1} + c_4^{p-1, q-1} ,\\
\case{2}{0} &= c_2^{p, q-1} + c_3^{p, q-1} + \cdots ,\\
\case{3}{0} &= c_2^{p-1, q} + c_3^{p-1, q} + c_4^{p-1, q} + \cdots ,\\
\case{4}{0} &= c_2^{p-1, q} + c_3^{p-1, q} + \cdots ,\\
\case{5}{0} &= c_2^{p, q-1} + c_3^{p, q-1} + c_4^{p, q-1} + \cdots ,\\
\case{6}{0} &= c_2^{p, q} + c_3^{p, q} + \cdots .\\
\endaligned
\]
\begin{Example}
Some cohomologies can be computed directly, as for example the Dolbeault cohomology:
\[
\aligned
h^{p, q}
&=
\case{0}{0} + \case{3}{0} + \case{5}{0}
\\
&=
c_1^{p, q} + c_2^{p, q} + \cdots 
\\
&\quad +
c_1^{p-1, q} + c_2^{p-1, q} + \cdots
\\
&\quad +
c_1^{p, q-1} + c_2^{p, q-1} + \cdots
\\
&\quad +
c_1^{p-1, q-1} + c_2^{p-1, q-1} + \cdots.
\endaligned
\]
If, for a $\mathfrak{sl}_2(\mathbb{C})$-representation $E$, we denote by $\delta(E)$ the number of irreducible representations in the decomposition of $E$, then we have the nice formula
\[
h^{p, q}
=
\delta(\Lambda^{p, q}{\mathfrak{b}^{1, 0}}^*)
+
\delta(\Lambda^{p-1, q}{\mathfrak{b}^{1, 0}}^*)
+
\delta(\Lambda^{p, q-1}{\mathfrak{b}^{1, 0}}^*)
+
\delta(\Lambda^{p-1, q-1}{\mathfrak{b}^{1, 0}}^*),
\]
which is also proven in \cite{AABRW} in the general case where $k_0 \geq 0$.  Similar formulas can be written for the Bott-Chern and Aeppli cohomologies, in the case $k_0 = 0$:
\[
\aligned
h_{\text{BC}}^{p, q} &= c_1^{p, q} + c_2^{p, q} + \cdots & h_{\text{A}}^{p, q} &=  c_1^{p, q} + 2c_2^{p, q} + 2c_3^{p, q} + \cdots \\
&+ c_1^{p-1, q} + c_2^{p-1, q} + \cdots & &+ c_1^{p-1, q} + c_2^{p-1, q} + \cdots \\
&+ c_1^{p, q-1} + c_2^{p, q-1} + \cdots & &+ c_1^{p, q-1} + c_2^{p, q-1} + \cdots \\
&+ c_1^{p-1, q-1} + 2c_2^{p-1, q-1} +2c_3^{p-1, q-1} + \cdots & &+ c_1^{p-1, q-1} + c_2^{p-1, q-1} + \cdots 
\endaligned
\]
\end{Example}
\begin{Example}
We shall illustrate Theorem~\ref{dec_ab} in the case $k_0 = 0$ and $k_1 = 2$. As a $\mathfrak{sl}_2(\mathbb{C})$ representation, ${\mathfrak{b}^*}^{1, 0}$ is isomorphic to $W_2$. In the diagram below, we decompose the 
$\mathfrak{sl}_2(\mathbb{C})$ representations $\Lambda^{p, q}\mathfrak{b}^*$ on the left, 
and we show the double complex on the right.
\[
\begin{array}{cccc}
0 & 0 & 0 & 0 \\
W_1 & W_2 & W_1 & 0 \\
W_2 & W_1 \oplus W_3 & W_2 & 0 \\
W_1 & W_2 & W_1 & 0
\end{array}
\quad \quad
\begin{tikzpicture}[baseline=5.8ex, inner sep=0pt, outer sep=0pt, remember picture]
\draw (0, 0) -- (2, 0) -- (2, 2) -- (0, 2) -- (0, 0);
\draw (0, .5) -- (2, .5);
\draw (0, 1) -- (2, 1);
\draw (0, 1.5) -- (2, 1.5);
\draw (.5, 0) -- (.5, 2);
\draw (1, 0) -- (1, 2);
\draw (1.5, 0) -- (1.5, 2);
\draw (.35, 1.2) -- (.65, 1.2) -- (.65, .9);
\draw (1.2, .35) -- (1.2, .65) -- (.9, .65);
\draw (.8, 1.65) -- (.8, 1.35) -- (1.1, 1.35);
\draw (1.65, .8) -- (1.35, .8) -- (1.35, 1.1);
\filldraw (.2, .2) circle (1pt);
\filldraw (1.8, .2) circle (1pt);
\filldraw (.2, 1.8) circle (1pt);
\filldraw (1.8, 1.8) circle (1pt);
\filldraw (.1, .6) circle (1pt);
\filldraw (.1, .7) circle (1pt);
\filldraw (1.9, 1.4) circle (1pt);
\filldraw (1.9, 1.3) circle (1pt);
\filldraw (.6, .1) circle (1pt);
\filldraw (.7, .1) circle (1pt);
\filldraw (1.4, 1.9) circle (1pt);
\filldraw (1.3, 1.9) circle (1pt);
\filldraw (.1, 1.4) circle (1pt);
\filldraw (1.4, .1) circle (1pt);
\filldraw (1.9, .6) circle (1pt);
\filldraw (.6, 1.9) circle (1pt);
\draw (0.25, 1.15) -- (0.25, .75) -- (.6, .75);
\draw (1.75, .85) -- (1.75, 1.25) -- (1.4, 1.25);
\draw (.75, .6) -- (.75, .25) -- (1.15, .25);
\draw (1.25, 1.4) -- (1.25, 1.75) -- (.85, 1.75);
\draw (1, 1) circle (4pt);
\draw (.8, 1.15) -- (.8, .8) -- (1.15, .8);
\draw (.85, 1.2) -- (1.2, 1.2) -- (1.2, .85); 
\filldraw (.65, .65) circle (1pt);
\filldraw (.73, .73) circle (1pt);
\filldraw (.57, .57) circle (1pt);
\filldraw (1.35, 1.35) circle (1pt);
\filldraw (1.27, 1.27) circle (1pt);
\filldraw (1.43, 1.43) circle (1pt);
\filldraw (.6, 1.4) circle (1pt);
\filldraw (.6, 1.3) circle (1pt);
\filldraw (.7, 1.4) circle (1pt);
\filldraw (.7, 1.3) circle (1pt);
\filldraw (1.4, .6) circle (1pt);
\filldraw (1.4, .7) circle (1pt);
\filldraw (1.3, .6) circle (1pt);
\filldraw (1.3, .7) circle (1pt);
\filldraw (.2, .2) circle (1pt);
\filldraw (1.8, .2) circle (1pt);
\filldraw (.2, 1.8) circle (1pt);
\filldraw (1.8, 1.8) circle (1pt);
\end{tikzpicture}
\, .
\]
Consider now the case $k_0=0$ and $k_1=3$. Then 
${\mathfrak{b}^*}^{1, 0} = W_3$. Since $\Lambda^2 W_3 = W_3$ and 
$W_3 \otimes W_3 = W_1 \oplus W_3 \oplus W_5$, we have the following decompositions:
\[
\begin{array}{ccccc}
0 & 0 & 0 & 0 & 0 \\
& & & & \\
W_1 & W_3 & W_3 & W_1 & 0 \\
& & & & \\
W_3 & W_1 \oplus W_3 \oplus W_5 & W_1 \oplus W_3 \oplus W_5 & W_3 & 0 \\
& & & & \\
W_3 & W_1 \oplus W_3 \oplus W_5 & W_1 \oplus W_3 \oplus W_5 & W_3 & 0 \\
& & & & \\
W_1 & W_3 & W_3 & W_1 & 0
\end{array}
\quad
\begin{tikzpicture}[baseline=13ex, inner sep=0pt, outer sep=0pt, remember picture, scale=1.6]
\draw (0, 0) -- (2.5, 0) -- (2.5, 2.5) -- (0, 2.5) -- (0, 0);
\draw (0, .5) -- (2.5, .5);
\draw (0, 1) -- (2.5, 1);
\draw (0, 1.5) -- (2.5, 1.5);
\draw (0, 2) -- (2.5, 2);
\draw (.5, 0) -- (.5, 2.5);
\draw (1, 0) -- (1, 2.5);
\draw (1.5, 0) -- (1.5, 2.5);
\draw (2, 0) -- (2, 2.5);

\draw (.35, 1.1) -- (.35, .85) -- (.6, .85);
\draw (.4, 1.15) -- (.65, 1.15) -- (.65, .9);

\draw (.35, 1.6) -- (.35, 1.35) -- (.6, 1.35);
\draw (.4, 1.65) -- (.65, 1.65) -- (.65, 1.4);

\draw (1.1, .35) -- (.85, .35) -- (.85, .6);
\draw (1.15, .4) -- (1.15, .65) -- (.9, .65);

\draw (1.6, .35) -- (1.35, .35) -- (1.35, .6);
\draw (1.65, .4) -- (1.65, .65) -- (1.4, .65);

%%%%%%%%%%%%%%%%%%%%%%%%%%%%%%%%%%%%%
\draw (2.15, 1.4) -- (2.15, 1.65) -- (1.9, 1.65);
\draw (2.1, 1.35) -- (1.85, 1.35) -- (1.85, 1.6);

\draw (2.15, .9) -- (2.15, 1.15) -- (1.9, 1.15);
\draw (2.1, .85) -- (1.85, .85) -- (1.85, 1.1);

\draw (1.4, 2.15) -- (1.65, 2.15) -- (1.65, 1.9);
\draw (1.35, 2.1) -- (1.35, 1.85) -- (1.6, 1.85);

\draw (.9, 2.15) -- (1.15, 2.15) -- (1.15, 1.9);
\draw (.85, 2.1) -- (.85, 1.85) -- (1.1, 1.85);
%%%%%%%%%%%%%%%%%%%%%%%%%%%%%%%%%%%%%

\draw (.85, 1.1) -- (.85, .85) -- (1.1, .85);
\draw (.9, 1.15) -- (1.15, 1.15) -- (1.15, .9);

\draw (1.35, 1.1) -- (1.35, .85) -- (1.6, .85);
\draw (1.35, 1.15) -- (1.65, 1.15) -- (1.65, .9);

\draw (1.35, 1.6) -- (1.35, 1.35) -- (1.6, 1.35);
\draw (1.35, 1.65) -- (1.65, 1.65) -- (1.65, 1.4);

\draw (.85, 1.6) -- (.85, 1.35) -- (1.1, 1.35);
\draw (.9, 1.65) -- (1.15, 1.65) -- (1.15, 1.4);

%%%%%%%%%%%%%%%%%%%%%%%%%%%%%%%%%%%%%

\draw (.8, 1.15) -- (.8, .8) -- (1.15, .8);
\draw (.85, 1.2) -- (1.2, 1.2) -- (1.2, .85); 

\draw (1.3, 1.15) -- (1.3, .8) -- (1.65, .8);
\draw (1.35, 1.2) -- (1.7, 1.2) -- (1.7, .85);

\draw (1.3, 1.65) -- (1.3, 1.3) -- (1.65, 1.3);
\draw (1.35, 1.7) -- (1.7, 1.7) -- (1.7, 1.35);

\draw (.8, 1.65) -- (.8, 1.3) -- (1.15, 1.3);
\draw (.85, 1.7) -- (1.2, 1.7) -- (1.2, 1.35); 

%%%%%%%%%%%%%%%%%%%%%%%%%%%%%%%%%%%%%%%%%
\filldraw (.2, .2) circle (1pt);
\filldraw (2.3, .2) circle (1pt);
\filldraw (.1, 1.9) circle (1pt);
\filldraw (1.9, .1) circle (1pt);
\filldraw (.2, 2.3) circle (1pt);
\filldraw (2.3, 2.3) circle (1pt);
\filldraw (.1, .6) circle (1pt);
\filldraw (.1, .7) circle (1pt);
\filldraw (2.4, 1.9) circle (1pt);
\filldraw (2.4, 1.8) circle (1pt);
\filldraw (.6, .1) circle (1pt);
\filldraw (.7, .1) circle (1pt);
\filldraw (1.9, 2.4) circle (1pt);
\filldraw (1.8, 2.4) circle (1pt);
\filldraw (.1, 1.4) circle (1pt);
\filldraw (1.4, .1) circle (1pt);
\filldraw (2.4, 1.1) circle (1pt);
\filldraw (1.1, 2.4) circle (1pt);
\filldraw (2.4, .6) circle (1pt);
\filldraw (.6, 2.4) circle (1pt);
\draw[red,fill=white] (1, .5) circle (3pt);
\node[draw = none] at (1, .5)   (a) {1};
\draw[red,fill=white] (1.5, .5) circle (3pt);
\node[draw = none] at (1.5, .5)   (a) {1};
\draw[red,fill=white] (.5, 1) circle (3pt);
\node[draw = none] at (.5, 1)   (a) {1};
\draw[red,fill=white] (.5, 1.5) circle (3pt);
\node[draw = none] at (.5, 1.5)   (a) {1};
%%%%%%%%%%%%%%%%%%%%%%%%%%%%
\draw[red,fill=white] (1.5, 2) circle (3pt);
\node[draw = none] at (1.5, 2)   (a) {1};
\draw[red,fill=white] (1., 2) circle (3pt);
\node[draw = none] at (1., 2)   (a) {1};
\draw[red,fill=white] (2, 1.5) circle (3pt);
\node[draw = none] at (2, 1.5)   (a) {1};
\draw[red,fill=white] (2, 1) circle (3pt);
\node[draw = none] at (2, 1)   (a) {1};
%%%%%%%%%%%%%%%%%%%%%%%%%%%%
\draw[red,fill=white] (1, 1) circle (3pt);
\node[draw = none] at (1, 1)   (a) {4};
\draw[red,fill=white] (1, 1.5) circle (3pt);
\node[draw = none] at (1, 1.5)   (a) {4};
\draw[red,fill=white] (1.5, 1.5) circle (3pt);
\node[draw = none] at (1.5, 1.5)   (a) {4};
\draw[red,fill=white] (1.5, 1) circle (3pt);
\node[draw = none] at (1.5, 1)   (a) {4};
%%%%%%%%%%%%%%%%%%%%%%%%%%%%
\node[draw = none] at (.75, .75)   (a) {4};
\node[draw = none] at (1.25, .75)   (a) {5};
\node[draw = none] at (.75, 1.25)   (a) {5};
\node[draw = none] at (1.75, .75)   (a) {4};
\node[draw = none] at (.75, 1.75)   (a) {4};
\node[draw = none] at (1.25, 1.25)   (a) {8};
\node[draw = none] at (1.25, 1.75)   (a) {5};
\node[draw = none] at (1.75, 1.25)   (a) {5};
\node[draw = none] at (1.75, 1.75)   (a) {4};
\end{tikzpicture}.
\]
\end{Example}
%%%%%%%%%%%%%%%%%%%%%%%%%%%%%%%%%%%%%%%
\vspace{.3cm}
{\bf Acknowledgements:} I am deeply grateful to Dr.\ Stelzig from the LMU for introducing me to the research problems I worked on, and for guiding me through them. I am also grateful to Dr.\ Ugarte for sharing a preliminary version of the corrigendum to~\cite{COUV} on the classification of nilpotent complex structures on nilpotent Lie algebras of dimension $6$. I would also like to thank the Ecole Normale Supérieure and my ARPE academic supervisor Prof.\ Gabriele Facciolo for this extraordinary opportunity of doing an internship abroad, and the Ludwig-Maximilian Universität München for accepting this project and having me during this year.
%%%%%%%%%%%%%%%%%%%%%%%%%%%%%%%%%%%%%%%%
\newpage
\appendix
\section{Some decompositions of double complexes}
We present the decompositions of some non-abelian complex structures on the Lie algebras $h_2$, $h_4$, $h_5$ and $h_6$.

\subsection{$h_2$, $h_4$ and $h_6$ with $B=1$}
%%%%%%%%%%%%%%%%%%%%%%%%%%%%%%%%%%%%%
\[
\aligned
d \omega^1 &= 0, \\
d \omega^2 &= 0, \\
d \omega^3 &= \omega^{12} + \omega^{1 \overline{1}} + 
\omega^{1 \overline{2}} + D \omega^{2 \overline{2}},
\endaligned
\]
where $D = x + iy$ with $0 \leqslant y$.
\[
\begin{array}{cccc}
& &  x^2 + y^2 + 2x = 0 & \\
D= 0 & x = 1 & D \neq 0 & \text{else} \\
\begin{tikzpicture}[baseline=5.8ex, inner sep=0pt, outer sep=0pt, remember picture, scale=1.5]
\draw (0, 0) -- (2, 0) -- (2, 2) -- (0, 2) -- (0, 0);
\draw (0, .5) -- (2, .5);
\draw (0, 1) -- (2, 1);
\draw (0, 1.5) -- (2, 1.5);
\draw (.5, 0) -- (.5, 2);
\draw (1, 0) -- (1, 2);
\draw (1.5, 0) -- (1.5, 2);

\draw (.35, 1.2) -- (.65, 1.2) -- (.65, .9);

\draw (1.2, .35) -- (1.2, .65) -- (.9, .65);

\draw (.8, 1.65) -- (.8, 1.35) -- (1.1, 1.35);

\draw (1.65, .8) -- (1.35, .8) -- (1.35, 1.1);
\filldraw (.2, .2) circle (1pt);
\filldraw (1.8, .2) circle (1pt);
\filldraw (.2, 1.8) circle (1pt);
\filldraw (1.8, 1.8) circle (1pt);
\filldraw (.1, .6) circle (1pt);
\filldraw (.1, .7) circle (1pt);
\filldraw (1.9, 1.4) circle (1pt);
\filldraw (1.9, 1.3) circle (1pt);
\filldraw (.6, .1) circle (1pt);
\filldraw (.7, .1) circle (1pt);
\filldraw (1.4, 1.9) circle (1pt);
\filldraw (1.3, 1.9) circle (1pt);
\filldraw (.1, 1.4) circle (1pt);
\filldraw (1.4, .1) circle (1pt);
\filldraw (1.9, .6) circle (1pt);
\filldraw (.6, 1.9) circle (1pt);
\draw (0.25, 1.15) -- (0.25, .75) -- (.6, .75);
\draw (1.75, .85) -- (1.75, 1.25) -- (1.4, 1.25);
\draw (.75, .6) -- (.75, .25) -- (1.15, .25);
\draw (1.25, 1.4) -- (1.25, 1.75) -- (.85, 1.75);
\draw (1, 1) circle (4pt);
\draw (.8, 1.15) -- (.8, .8) -- (1.15, .8);
\draw (.85, 1.2) -- (1.2, 1.2) -- (1.2, .85); 
\filldraw (.65, .65) circle (1pt);
\filldraw (.73, .73) circle (1pt);
\filldraw (.57, .57) circle (1pt);
\filldraw (1.35, 1.35) circle (1pt);
\filldraw (1.27, 1.27) circle (1pt);
\filldraw (1.43, 1.43) circle (1pt);
\filldraw (.6, 1.4) circle (1pt);
\filldraw (.6, 1.3) circle (1pt);
\filldraw (.7, 1.4) circle (1pt);
\filldraw (.7, 1.3) circle (1pt);
\filldraw (1.4, .6) circle (1pt);
\filldraw (1.4, .7) circle (1pt);
\filldraw (1.3, .6) circle (1pt);
\filldraw (1.3, .7) circle (1pt);
\filldraw (.2, .2) circle (1pt);
\filldraw (1.8, .2) circle (1pt);
\filldraw (.2, 1.8) circle (1pt);
\filldraw (1.8, 1.8) circle (1pt);
\end{tikzpicture}
&
\begin{tikzpicture}[baseline=5.8ex, inner sep=0pt, outer sep=0pt, remember picture, scale=1.5]
\draw (0, 0) -- (2, 0) -- (2, 2) -- (0, 2) -- (0, 0);
\draw (0, .5) -- (2, .5);
\draw (0, 1) -- (2, 1);
\draw (0, 1.5) -- (2, 1.5);
\draw (.5, 0) -- (.5, 2);
\draw (1, 0) -- (1, 2);
\draw (1.5, 0) -- (1.5, 2);
\draw (.35, 1.1) -- (.65, 1.1) -- (.65, .9);
\draw (.35, 1.2) -- (.75, 1.2) -- (.75, .9);
\draw (1.1, .35) -- (1.1, .65) -- (.9, .65);
\draw (1.2, .35) -- (1.2, .75) -- (.9, .75);
\draw (.9, 1.65) -- (.9, 1.35) -- (1.1, 1.35);
\draw (.8, 1.65) -- (.8, 1.25) -- (1.1, 1.25);
\draw (1.65, .9) -- (1.35, .9) -- (1.35, 1.1);
\draw (1.65, .8) -- (1.25, .8) -- (1.25, 1.1);
\filldraw (.2, .2) circle (1pt);
\filldraw (1.8, .2) circle (1pt);
\filldraw (.2, 1.8) circle (1pt);
\filldraw (1.8, 1.8) circle (1pt);
\filldraw (.1, .6) circle (1pt);
\filldraw (.1, .7) circle (1pt);
\filldraw (1.9, 1.4) circle (1pt);
\filldraw (1.9, 1.3) circle (1pt);
\filldraw (.6, .1) circle (1pt);
\filldraw (.7, .1) circle (1pt);
\filldraw (1.4, 1.9) circle (1pt);
\filldraw (1.3, 1.9) circle (1pt);
\draw (0.25, 1.15) -- (0.25, .75) -- (.6, .75);
\draw (1.75, .85) -- (1.75, 1.25) -- (1.4, 1.25);
\draw (.75, .6) -- (.75, .25) -- (1.15, .25);
\draw (1.25, 1.4) -- (1.25, 1.75) -- (.85, 1.75);
\filldraw (.6, 1.4) circle (1pt);
\filldraw (.6, 1.3) circle (1pt);
\filldraw (.7, 1.35) circle (1pt);
\filldraw (1.4, .6) circle (1pt);
\filldraw (1.4, .7) circle (1pt);
\filldraw (1.3, .65) circle (1pt);
\draw (.85, 1.1) -- (.85, .85) -- (1.1, .85);
\draw (.9, 1.15) -- (1.15, 1.15) -- (1.15, .9); 
\filldraw (.65, .65) circle (1pt);
\filldraw (.75, .75) circle (1pt);
\filldraw (1.35, 1.35) circle (1pt);
\filldraw (1.25, 1.25) circle (1pt);
\end{tikzpicture}
&
\begin{tikzpicture}[baseline=5.8ex, inner sep=0pt, outer sep=0pt, remember picture, scale=1.5]
\draw (0, 0) -- (2, 0) -- (2, 2) -- (0, 2) -- (0, 0);
\draw (0, .5) -- (2, .5);
\draw (0, 1) -- (2, 1);
\draw (0, 1.5) -- (2, 1.5);
\draw (.5, 0) -- (.5, 2);
\draw (1, 0) -- (1, 2);
\draw (1.5, 0) -- (1.5, 2);
\draw (0.25, 1.15) -- (0.25, .75) -- (.6, .75);
\draw (1.75, .85) -- (1.75, 1.25) -- (1.4, 1.25);
\draw (.75, .6) -- (.75, .25) -- (1.15, .25);
\draw (1.25, 1.4) -- (1.25, 1.75) -- (.85, 1.75);
\filldraw (.1, .6) circle (1pt);
\filldraw (.1, .7) circle (1pt);
\filldraw (1.9, 1.4) circle (1pt);
\filldraw (1.9, 1.3) circle (1pt);
\filldraw (.6, .1) circle (1pt);
\filldraw (.7, .1) circle (1pt);
\filldraw (1.4, 1.9) circle (1pt);
\filldraw (1.3, 1.9) circle (1pt);
\filldraw (.65, .65) circle (1pt);
\filldraw (.73, .73) circle (1pt);
\filldraw (.57, .57) circle (1pt);
\filldraw (1.35, 1.35) circle (1pt);
\filldraw (1.27, 1.27) circle (1pt);
\filldraw (1.43, 1.43) circle (1pt);
\draw (.35, 1.2) -- (.75, 1.2) -- (.75, .75);
\draw (1.2, .35) -- (1.2, .75) -- (.75, .75);
\draw (.8, 1.65) -- (.8, 1.25) -- (1.25, 1.25);
\draw (1.65, .8) -- (1.25, .8) -- (1.25, 1.25);
\draw (.35, 1.1) -- (.65, 1.1) -- (.65, .9);
\draw (1.1, .35) -- (1.1, .65) -- (.9, .65);
\draw (.9, 1.65) -- (.9, 1.35) -- (1.1, 1.35);
\draw (1.65, .9) -- (1.35, .9) -- (1.35, 1.1);
\filldraw (.6, 1.4) circle (1pt);
\filldraw (.6, 1.3) circle (1pt);
\filldraw (.7, 1.4) circle (1pt);
\filldraw (.7, 1.3) circle (1pt);
\filldraw (1.4, .6) circle (1pt);
\filldraw (1.4, .7) circle (1pt);
\filldraw (1.3, .6) circle (1pt);
\filldraw (1.3, .7) circle (1pt);
\draw (1, 1) circle (4pt);
\filldraw (.2, .2) circle (1pt);
\filldraw (1.8, .2) circle (1pt);
\filldraw (.2, 1.8) circle (1pt);
\filldraw (1.8, 1.8) circle (1pt);
\end{tikzpicture}
&
\begin{tikzpicture}[baseline=5.8ex, inner sep=0pt, outer sep=0pt, remember picture, scale=1.5]
\draw (0, 0) -- (2, 0) -- (2, 2) -- (0, 2) -- (0, 0);
\draw (0, .5) -- (2, .5);
\draw (0, 1) -- (2, 1);
\draw (0, 1.5) -- (2, 1.5);
\draw (.5, 0) -- (.5, 2);
\draw (1, 0) -- (1, 2);
\draw (1.5, 0) -- (1.5, 2);
\draw (0.25, 1.15) -- (0.25, .75) -- (.6, .75);
\draw (1.75, .85) -- (1.75, 1.25) -- (1.4, 1.25);
\draw (.75, .6) -- (.75, .25) -- (1.15, .25);
\draw (1.25, 1.4) -- (1.25, 1.75) -- (.85, 1.75);
\filldraw (.1, .6) circle (1pt);
\filldraw (.1, .7) circle (1pt);
\filldraw (1.9, 1.4) circle (1pt);
\filldraw (1.9, 1.3) circle (1pt);
\filldraw (.6, .1) circle (1pt);
\filldraw (.7, .1) circle (1pt);
\filldraw (1.4, 1.9) circle (1pt);
\filldraw (1.3, 1.9) circle (1pt);
\filldraw (.65, .65) circle (1pt);
\filldraw (.7, .75) circle (1pt);
\filldraw (1.35, 1.35) circle (1pt);
\filldraw (1.3, 1.25) circle (1pt);
\draw (.35, 1.1) -- (.65, 1.1) -- (.65, .9);
\draw (.35, 1.2) -- (.75, 1.2) -- (.75, .9);
\draw (1.1, .35) -- (1.1, .65) -- (.9, .65);
\draw (1.2, .35) -- (1.2, .75) -- (.9, .75);
\draw (.9, 1.65) -- (.9, 1.35) -- (1.1, 1.35);
\draw (.8, 1.65) -- (.8, 1.25) -- (1.1, 1.25);
\draw (1.65, .9) -- (1.35, .9) -- (1.35, 1.1);
\draw (1.65, .8) -- (1.25, .8) -- (1.25, 1.1);
\filldraw (.6, 1.4) circle (1pt);
\filldraw (.6, 1.3) circle (1pt);
\filldraw (.7, 1.4) circle (1pt);
\filldraw (.7, 1.3) circle (1pt);
\filldraw (1.4, .6) circle (1pt);
\filldraw (1.4, .7) circle (1pt);
\filldraw (1.3, .6) circle (1pt);
\filldraw (1.3, .7) circle (1pt);
\draw (1, 1) circle (4pt);
\filldraw (.2, .2) circle (1pt);
\filldraw (1.8, .2) circle (1pt);
\filldraw (.2, 1.8) circle (1pt);
\filldraw (1.8, 1.8) circle (1pt);
\end{tikzpicture}
\end{array}
\]
\vspace{.8cm}
%%%%%%%%%%%%%%%%%%%%%%%%%%%%%%%%%%%%%
\subsection{$h_2$, $h_4$ and $h_5$ with $B = 0$}
\label{h5app}
%%%%%%%%%%%%%%%%%%%%%%%%%%%%%%%%%%%%%
\[
\aligned
d \omega^1 &= 0, \\
d \omega^2 &= 0, \\
d \omega^3 &= \omega^{12} + \omega^{1 \overline{1}} + D \omega^{2 \overline{2}},
\endaligned
\]
where $D = x+iy$ with $0 \leqslant y$.
\[
\begin{array}{cccc}
D = -1 &
D = 0 & 
D = \frac{1}{2}\\
\begin{tikzpicture}[baseline=5.8ex, inner sep=0pt, outer sep=0pt, remember picture, scale=1.5]
\draw (0, 0) -- (2, 0) -- (2, 2) -- (0, 2) -- (0, 0);
\draw (0, .5) -- (2, .5);
\draw (0, 1) -- (2, 1);
\draw (0, 1.5) -- (2, 1.5);
\draw (.5, 0) -- (.5, 2);
\draw (1, 0) -- (1, 2);
\draw (1.5, 0) -- (1.5, 2);
\draw (0.25, 1.15) -- (0.25, .6) -- (.6, .6) -- (.6, .25) -- (1.15, .25);
\draw (1.75, .85) -- (1.75, 1.4) -- (1.4, 1.4) -- (1.4, 1.75) -- (.85, 1.75);
\filldraw (.1, .6) circle (1pt);
\filldraw (.1, .7) circle (1pt);
\filldraw (1.9, 1.4) circle (1pt);
\filldraw (1.9, 1.3) circle (1pt);
\filldraw (.6, .1) circle (1pt);
\filldraw (.7, .1) circle (1pt);
\filldraw (1.4, 1.9) circle (1pt);
\filldraw (1.3, 1.9) circle (1pt);
\filldraw (.6, .7) circle (1pt);
\filldraw (.6, .8) circle (1pt);
\filldraw (.6, .9) circle (1pt);
\filldraw (.75, .6) circle (1pt);
\filldraw (.85, .6) circle (1pt);
\filldraw (1.3, 1.4) circle (1pt);
\filldraw (1.2, 1.4) circle (1pt);
\filldraw (1.1, 1.4) circle (1pt);
\filldraw (1.4, 1.25) circle (1pt);
\filldraw (1.4, 1.15) circle (1pt);
\draw (.35, 1.1) -- (.65, 1.1) -- (.65, .65);
\draw (.35, 1.2) -- (.75, 1.2) -- (.75, .75);
\draw (1.1, .35) -- (1.1, .65) -- (.65, .65);
\draw (1.2, .35) -- (1.2, .75) -- (.75, .75);
\draw (.9, 1.65) -- (.9, 1.35) -- (1.35, 1.35);
\draw (.8, 1.65) -- (.8, 1.25) -- (1.25, 1.25);
\draw (1.65, .9) -- (1.35, .9) -- (1.35, 1.35);
\draw (1.65, .8) -- (1.25, .8) -- (1.25, 1.25);
\filldraw (.6, 1.4) circle (1pt);
\filldraw (.6, 1.3) circle (1pt);
\filldraw (.7, 1.4) circle (1pt);
\filldraw (.7, 1.3) circle (1pt);
\filldraw (1.4, .6) circle (1pt);
\filldraw (1.4, .7) circle (1pt);
\filldraw (1.3, .6) circle (1pt);
\filldraw (1.3, .7) circle (1pt);
\draw (1, 1) circle (4pt);
\filldraw (.2, .2) circle (1pt);
\filldraw (1.8, .2) circle (1pt);
\filldraw (.2, 1.8) circle (1pt);
\filldraw (1.8, 1.8) circle (1pt);
\end{tikzpicture}
&
\begin{tikzpicture}[baseline=5.8ex, inner sep=0pt, outer sep=0pt, remember picture, scale=1.5]
\draw (0, 0) -- (2, 0) -- (2, 2) -- (0, 2) -- (0, 0);
\draw (0, .5) -- (2, .5);
\draw (0, 1) -- (2, 1);
\draw (0, 1.5) -- (2, 1.5);
\draw (.5, 0) -- (.5, 2);
\draw (1, 0) -- (1, 2);
\draw (1.5, 0) -- (1.5, 2);
\draw (0.25, 1.15) -- (0.25, .6) -- (.6, .6) -- (.6, .25) -- (1.15, .25);
\draw (1.75, .85) -- (1.75, 1.4) -- (1.4, 1.4) -- (1.4, 1.75) -- (.85, 1.75);
\filldraw (.1, .6) circle (1pt);
\filldraw (.1, .7) circle (1pt);
\filldraw (1.9, 1.4) circle (1pt);
\filldraw (1.9, 1.3) circle (1pt);
\filldraw (.6, .1) circle (1pt);
\filldraw (.7, .1) circle (1pt);
\filldraw (1.4, 1.9) circle (1pt);
\filldraw (1.3, 1.9) circle (1pt);
\filldraw (.65, .65) circle (1pt);
\filldraw (.7, .75) circle (1pt);
\filldraw (.75, .65) circle (1pt);
\filldraw (1.35, 1.35) circle (1pt);
\filldraw (1.3, 1.25) circle (1pt);
\filldraw (1.25, 1.35) circle (1pt);
\draw (.35, 1.1) -- (.65, 1.1) -- (.65, .9);
\draw (1.1, .35) -- (1.1, .65) -- (.9, .65);
\draw (.9, 1.65) -- (.9, 1.35) -- (1.1, 1.35);
\draw (1.65, .9) -- (1.35, .9) -- (1.35, 1.1);
\filldraw (.6, 1.4) circle (1pt);
\filldraw (.6, 1.3) circle (1pt);
\filldraw (.7, 1.4) circle (1pt);
\filldraw (.7, 1.3) circle (1pt);
\filldraw (1.4, .6) circle (1pt);
\filldraw (1.4, .7) circle (1pt);
\filldraw (1.3, .6) circle (1pt);
\filldraw (1.3, .7) circle (1pt);
\draw (1, 1) circle (4pt);
\draw (.8, .85) -- (.8, 1.15);
\draw (1.2, .85) -- (1.2, 1.15);
\draw (.85, .8) -- (1.15, .8);
\draw (.85, 1.2) -- (1.15, 1.2);
\filldraw (.2, .2) circle (1pt);
\filldraw (1.8, .2) circle (1pt);
\filldraw (.2, 1.8) circle (1pt);
\filldraw (1.8, 1.8) circle (1pt);
\filldraw (.1, 1.4) circle (1pt);
\filldraw (1.4, .1) circle (1pt);
\filldraw (1.9, .6) circle (1pt);
\filldraw (.6, 1.9) circle (1pt);
\end{tikzpicture}
&
\begin{tikzpicture}[baseline=5.8ex, inner sep=0pt, outer sep=0pt, remember picture, scale=1.5]
\draw (0, 0) -- (2, 0) -- (2, 2) -- (0, 2) -- (0, 0);
\draw (0, .5) -- (2, .5);
\draw (0, 1) -- (2, 1);
\draw (0, 1.5) -- (2, 1.5);
\draw (.5, 0) -- (.5, 2);
\draw (1, 0) -- (1, 2);
\draw (1.5, 0) -- (1.5, 2);
\draw (0.25, 1.15) -- (0.25, .6) -- (.6, .6) -- (.6, .25) -- (1.15, .25);
\draw (1.75, .85) -- (1.75, 1.4) -- (1.4, 1.4) -- (1.4, 1.75) -- (.85, 1.75);
\draw (.85, 1.1) -- (.85, .85) -- (1.1, .85);
\draw (.9, 1.15) -- (1.15, 1.15) -- (1.15, .9); 
\filldraw (.1, .6) circle (1pt);
\filldraw (.1, .7) circle (1pt);
\filldraw (1.9, 1.4) circle (1pt);
\filldraw (1.9, 1.3) circle (1pt);
\filldraw (.6, .1) circle (1pt);
\filldraw (.7, .1) circle (1pt);
\filldraw (1.4, 1.9) circle (1pt);
\filldraw (1.3, 1.9) circle (1pt);
\filldraw (.65, .65) circle (1pt);
\filldraw (.7, .75) circle (1pt);
\filldraw (.75, .65) circle (1pt);
\filldraw (1.35, 1.35) circle (1pt);
\filldraw (1.3, 1.25) circle (1pt);
\filldraw (1.25, 1.35) circle (1pt);
\draw (.35, 1.1) -- (.65, 1.1) -- (.65, .9);
\draw (.35, 1.2) -- (.75, 1.2) -- (.75, .9);
\draw (1.1, .35) -- (1.1, .65) -- (.9, .65);
\draw (1.2, .35) -- (1.2, .75) -- (.9, .75);
\draw (.9, 1.65) -- (.9, 1.35) -- (1.1, 1.35);
\draw (.8, 1.65) -- (.8, 1.25) -- (1.1, 1.25);
\draw (1.65, .9) -- (1.35, .9) -- (1.35, 1.1);
\draw (1.65, .8) -- (1.25, .8) -- (1.25, 1.1);
\filldraw (.6, 1.4) circle (1pt);
\filldraw (.6, 1.3) circle (1pt);
\filldraw (.7, 1.35) circle (1pt);
\filldraw (1.4, .6) circle (1pt);
\filldraw (1.4, .7) circle (1pt);
\filldraw (1.3, .65) circle (1pt);
\filldraw (.2, .2) circle (1pt);
\filldraw (1.8, .2) circle (1pt);
\filldraw (.2, 1.8) circle (1pt);
\filldraw (1.8, 1.8) circle (1pt);
\end{tikzpicture}
\end{array}
\]
\vspace{.5cm}
\[
\begin{array}{cccc}
& D = \frac{1}{2} + iy &  \\
D \in \mathbb{R} \, \backslash \, \{-1, 0, \frac{1}{2}\}   & 0 < y < \sqrt{\frac{3}{4}} & \text{else} \\
\begin{tikzpicture}[baseline=5.8ex, inner sep=0pt, outer sep=0pt, remember picture, scale=1.5]
\draw (0, 0) -- (2, 0) -- (2, 2) -- (0, 2) -- (0, 0);
\draw (0, .5) -- (2, .5);
\draw (0, 1) -- (2, 1);
\draw (0, 1.5) -- (2, 1.5);
\draw (.5, 0) -- (.5, 2);
\draw (1, 0) -- (1, 2);
\draw (1.5, 0) -- (1.5, 2);
\draw (0.25, 1.15) -- (0.25, .6) -- (.6, .6) -- (.6, .25) -- (1.15, .25);
\draw (1.75, .85) -- (1.75, 1.4) -- (1.4, 1.4) -- (1.4, 1.75) -- (.85, 1.75);
\filldraw (.1, .6) circle (1pt);
\filldraw (.1, .7) circle (1pt);
\filldraw (1.9, 1.4) circle (1pt);
\filldraw (1.9, 1.3) circle (1pt);
\filldraw (.6, .1) circle (1pt);
\filldraw (.7, .1) circle (1pt);
\filldraw (1.4, 1.9) circle (1pt);
\filldraw (1.3, 1.9) circle (1pt);
\filldraw (.65, .65) circle (1pt);
\filldraw (.7, .75) circle (1pt);
\filldraw (.75, .65) circle (1pt);
\filldraw (1.35, 1.35) circle (1pt);
\filldraw (1.3, 1.25) circle (1pt);
\filldraw (1.25, 1.35) circle (1pt);
\draw (.35, 1.1) -- (.65, 1.1) -- (.65, .9);
\draw (.35, 1.2) -- (.75, 1.2) -- (.75, .9);
\draw (1.1, .35) -- (1.1, .65) -- (.9, .65);
\draw (1.2, .35) -- (1.2, .75) -- (.9, .75);
\draw (.9, 1.65) -- (.9, 1.35) -- (1.1, 1.35);
\draw (.8, 1.65) -- (.8, 1.25) -- (1.1, 1.25);
\draw (1.65, .9) -- (1.35, .9) -- (1.35, 1.1);
\draw (1.65, .8) -- (1.25, .8) -- (1.25, 1.1);
\filldraw (.6, 1.4) circle (1pt);
\filldraw (.6, 1.3) circle (1pt);
\filldraw (.7, 1.4) circle (1pt);
\filldraw (.7, 1.3) circle (1pt);
\filldraw (1.4, .6) circle (1pt);
\filldraw (1.4, .7) circle (1pt);
\filldraw (1.3, .6) circle (1pt);
\filldraw (1.3, .7) circle (1pt);
\draw (1, 1) circle (4pt);
\filldraw (.2, .2) circle (1pt);
\filldraw (1.8, .2) circle (1pt);
\filldraw (.2, 1.8) circle (1pt);
\filldraw (1.8, 1.8) circle (1pt);
\end{tikzpicture}
&
\begin{tikzpicture}[baseline=5.8ex, inner sep=0pt, outer sep=0pt, remember picture, scale=1.5]
\draw (0, 0) -- (2, 0) -- (2, 2) -- (0, 2) -- (0, 0);
\draw (0, .5) -- (2, .5);
\draw (0, 1) -- (2, 1);
\draw (0, 1.5) -- (2, 1.5);
\draw (.5, 0) -- (.5, 2);
\draw (1, 0) -- (1, 2);
\draw (1.5, 0) -- (1.5, 2);
\draw (.35, 1.1) -- (.65, 1.1) -- (.65, .9);
\draw (.35, 1.2) -- (.75, 1.2) -- (.75, .9);
\draw (1.1, .35) -- (1.1, .65) -- (.9, .65);
\draw (1.2, .35) -- (1.2, .75) -- (.9, .75);
\draw (.9, 1.65) -- (.9, 1.35) -- (1.1, 1.35);
\draw (.8, 1.65) -- (.8, 1.25) -- (1.1, 1.25);
\draw (1.65, .9) -- (1.35, .9) -- (1.35, 1.1);
\draw (1.65, .8) -- (1.25, .8) -- (1.25, 1.1);
\filldraw (.2, .2) circle (1pt);
\filldraw (1.8, .2) circle (1pt);
\filldraw (.2, 1.8) circle (1pt);
\filldraw (1.8, 1.8) circle (1pt);
\filldraw (.1, .6) circle (1pt);
\filldraw (.1, .7) circle (1pt);
\filldraw (1.9, 1.4) circle (1pt);
\filldraw (1.9, 1.3) circle (1pt);
\filldraw (.6, .1) circle (1pt);
\filldraw (.7, .1) circle (1pt);
\filldraw (1.4, 1.9) circle (1pt);
\filldraw (1.3, 1.9) circle (1pt);
\draw (0.25, 1.15) -- (0.25, .75) -- (.6, .75);
\draw (1.75, .85) -- (1.75, 1.25) -- (1.4, 1.25);
\draw (.75, .6) -- (.75, .25) -- (1.15, .25);
\draw (1.25, 1.4) -- (1.25, 1.75) -- (.85, 1.75);
\filldraw (.6, 1.4) circle (1pt);
\filldraw (.6, 1.3) circle (1pt);
\filldraw (.7, 1.35) circle (1pt);
\filldraw (1.4, .6) circle (1pt);
\filldraw (1.4, .7) circle (1pt);
\filldraw (1.3, .65) circle (1pt);
\draw (.85, 1.1) -- (.85, .85) -- (1.1, .85);
\draw (.9, 1.15) -- (1.15, 1.15) -- (1.15, .9); 
\filldraw (.65, .65) circle (1pt);
\filldraw (.75, .75) circle (1pt);
\filldraw (1.35, 1.35) circle (1pt);
\filldraw (1.25, 1.25) circle (1pt);
\end{tikzpicture}
&
\begin{tikzpicture}[baseline=5.8ex, inner sep=0pt, outer sep=0pt, remember picture, scale=1.5]
\draw (0, 0) -- (2, 0) -- (2, 2) -- (0, 2) -- (0, 0);
\draw (0, .5) -- (2, .5);
\draw (0, 1) -- (2, 1);
\draw (0, 1.5) -- (2, 1.5);
\draw (.5, 0) -- (.5, 2);
\draw (1, 0) -- (1, 2);
\draw (1.5, 0) -- (1.5, 2);
\draw (0.25, 1.15) -- (0.25, .75) -- (.6, .75);
\draw (1.75, .85) -- (1.75, 1.25) -- (1.4, 1.25);
\draw (.75, .6) -- (.75, .25) -- (1.15, .25);
\draw (1.25, 1.4) -- (1.25, 1.75) -- (.85, 1.75);
\filldraw (.1, .6) circle (1pt);
\filldraw (.1, .7) circle (1pt);
\filldraw (1.9, 1.4) circle (1pt);
\filldraw (1.9, 1.3) circle (1pt);
\filldraw (.6, .1) circle (1pt);
\filldraw (.7, .1) circle (1pt);
\filldraw (1.4, 1.9) circle (1pt);
\filldraw (1.3, 1.9) circle (1pt);
\filldraw (.65, .65) circle (1pt);
\filldraw (.7, .75) circle (1pt);
\filldraw (1.35, 1.35) circle (1pt);
\filldraw (1.3, 1.25) circle (1pt);
\draw (.35, 1.1) -- (.65, 1.1) -- (.65, .9);
\draw (.35, 1.2) -- (.75, 1.2) -- (.75, .9);
\draw (1.1, .35) -- (1.1, .65) -- (.9, .65);
\draw (1.2, .35) -- (1.2, .75) -- (.9, .75);
\draw (.9, 1.65) -- (.9, 1.35) -- (1.1, 1.35);
\draw (.8, 1.65) -- (.8, 1.25) -- (1.1, 1.25);
\draw (1.65, .9) -- (1.35, .9) -- (1.35, 1.1);
\draw (1.65, .8) -- (1.25, .8) -- (1.25, 1.1);
\filldraw (.6, 1.4) circle (1pt);
\filldraw (.6, 1.3) circle (1pt);
\filldraw (.7, 1.4) circle (1pt);
\filldraw (.7, 1.3) circle (1pt);
\filldraw (1.4, .6) circle (1pt);
\filldraw (1.4, .7) circle (1pt);
\filldraw (1.3, .6) circle (1pt);
\filldraw (1.3, .7) circle (1pt);
\draw (1, 1) circle (4pt);
\filldraw (.2, .2) circle (1pt);
\filldraw (1.8, .2) circle (1pt);
\filldraw (.2, 1.8) circle (1pt);
\filldraw (1.8, 1.8) circle (1pt);
\end{tikzpicture}
\end{array}
\]
%%%%%%%%%%%%%%%%%%%%%%%%%%%%%%%%%%%%%%%%

\end{document}